\newcommand{\B}{\mathcal{B}}
\def\XXint#1#2#3{{\setbox0=\hbox{$#1{#2#3}{\int}$ }
\vcenter{\hbox{$#2#3$ }}\kern-.56\wd0}}
\newcommand{\N}{\mathbb{N}}
\newcommand{\R}{\mathbb{R}}
\newcommand{\ep}{\varepsilon}
\newcommand{\conv}[1]{\xrightarrow{\,#1\,}}
\newcommand{\dive}{\text{div}}
\newcommand{\s}{\hspace{7pt}}
\newcommand{\vsp}{\vspace{4pt}}
\newcommand{\ov}[1]{\widetilde{#1}}
\newcommand{\lp}{\langle}
\newcommand{\rp}{\rangle}
\newcommand{\inj}{\textnormal{inj}}
\newtheorem{theorem}{Theorem}[section]
\newtheorem{proposition}[theorem]{Proposition}
\newtheorem{lemma}[theorem]{Lemma}
\theoremstyle{definition}
\newtheorem{definition}[theorem]{Definition}
\newtheorem{remark}[theorem]{Remark}
\newcommand{\addressa}[1]{\gdef\@addressa{#1}}
\newcommand{\emaila}[1]{\gdef\@emaila{\url{#1}}}
\newcommand{\addressb}[1]{\gdef\@addressb{#1}}
\newcommand{\emailb}[1]{\gdef\@emailb{\url{#1}}}
\newcommand{\addressc}[1]{\gdef\@addressc{#1}}
\newcommand{\emailc}[1]{\gdef\@emailc{\url{#1}}}
\newcommand{\@endstuff}{\par\vspace{\baselineskip}\noindent
\begin{tabular}{@{}l}\scshape\@addressa\\\textit{E-mail address:} \@emaila\end{tabular} 

\vspace{12pt} \noindent
\begin{tabular}{@{}l}\scshape\@addressb\\ \textit{E-mail address:} \@emailb\end{tabular}

\vspace{12pt} \noindent
\begin{tabular}{@{}l}\scshape\@addressc\\ \textit{E-mail address:} \@emailc\end{tabular}
}
\begin{document}

%%% CHECK CITATIONS ------  
%  \nocite{*}

%%%%%%  TITLE %%%%%%%%%%%%%%%%%%%%%%%%%%%
\title{Fractional Sobolev spaces on Riemannian manifolds}
\date{\today}

%%%%%% Authors %%%%%%%%%%%%%%%%%%%%%%%%%%%

\author{Michele Caselli, Enric Florit-Simon and Joaquim Serra}

%%%%%% Info and addresses %%%%%%%%%%%%%%%%%%%%%%%%%%%

\addressa{Michele Caselli \\ Scuola Normale Superiore\\ Piazza dei Cavalieri 7, 56126 Pisa, Italy }
\emaila{michele.caselli@sns.it}

\addressb{Enric Florit-Simon \\ Department of Mathematics, ETH Z\"{u}rich \\ Rämistrasse 101, 8092 Zürich, Switzerland}
\emailb{enric.florit@math.ethz.ch}

\addressc{Joaquim Serra \\ Department of Mathematics, ETH Z\"{u}rich \\ Rämistrasse 101, 8092 Zürich, Switzerland }
\emailc{joaquim.serra@math.ethz.ch}

%%%%%%%%%%%%%%%%%%%%%%%%%%%%%%%%%%%%%%%%%%

\setlength\parindent{12pt}

\maketitle

\begin{abstract}
This article studies the canonical Hilbert energy $H^{s/2}(M)$ on a Riemannian manifold for $s\in(0,2)$, with particular focus on the case of closed manifolds. Several equivalent definitions for this energy and the fractional Laplacian on a manifold are given, and they are shown to be identical up to explicit multiplicative constants. Moreover, the precise behaviour of the kernel associated with the singular integral definition of the fractional Laplacian is obtained through an in-depth study of the heat kernel on a Riemannian manifold. Furthermore, a monotonicity formula for stationary points of functionals of the type $\mathcal E(v)=[v]^2_{H^{s/2}(M)}+\int_M F(v) \, dV$, with $F\geq 0$, is given, which includes in particular the case of nonlocal $s$-minimal surfaces. Finally, we prove some estimates for the Caffarelli-Silvestre extension problem, which are of general interest.

\vspace{2pt}
This work is motivated by \cite{CFS}, which defines nonlocal minimal surfaces on closed Riemannian manifolds and shows the existence of infinitely many of them for any metric on the manifold, ultimately proving the nonlocal version of a conjecture of Yau (\cite{Yau82}). Indeed, the definitions and results in the present work serve as an essential technical toolbox for the results in \cite{CFS}.
\end{abstract} 

\tableofcontents

\section{Introduction}

In recent years we have seen a great development of the theory of nonlocal equations. The simplest example of a nonlocal operator is the fractional Laplacian, $(-\Delta)^{\sigma}u=\int_{\R^n} (u(x)-u(y))\frac{1}{|x-y|^{n+2\sigma}}$, where $\sigma\in(0,1)$ and $u:\R^n\to\R$. Formally, it corresponds to the $\sigma$-th power of the usual Laplacian, and it is, therefore, an operator of order (of differentiation) $2\sigma$. Another way to look at it is as the operator arising from the Euler-Lagrange equation of the functional
$$
 \mathcal E(v)=[v]^2_{H^\sigma(\R^n)}=\iint_{\R^n\times\R^n} (u(x)-u(y))^2\frac{1}{|x-y|^{n+2\sigma}}\,,$$
 which involves a fractional Sobolev energy term. There are precise multiplicative constants one should put in front of these objects which will be given later, but we will omit them in the introduction for the sake of exposition.

 \vsp
The present work addresses how the fractional Sobolev energy $H^{\sigma}(M)=W^{\sigma,2}(M)$ and the associated fractional Laplacian on $M$ have a natural, canonical interpretation in the case where $M$ is a closed Riemannian manifold. We give several definitions for these objects and show them to be identical, which justifies their canonical nature. Moreover, we obtain fundamental properties for these objects thanks to a deeper study of their different definitions. We give a brief account in what follows:

\vsp
Let $(M^n, g)$ be an $n$-dimensional, closed Riemannian manifold, with $n\ge2$.
Let us start by giving the definition of the fractional Sobolev seminorm $H^{s/2}(M)$ ---for convenience and consistency with the notation in our paper \cite{CFS}, throughout the paper we put $\sigma = s/2$, where $s\in (0,2)$.

The $H^{s/2}(M)$ seminorm can be defined in at least three equivalent ways:
 \begin{itemize}
 \item[(i)] Using the {\em heat kernel}\footnote{As customary, by heat kernel here we mean
the fundamental solution of the heat equation $\partial_t u = \Delta u$ on $M$, where $\Delta$ denotes the Laplace-Beltrami operator on $M$.} $H_M(t,p,q)$ of  $M$, we can put
 \begin{equation}\label{wethiowhoihw2}
K_s(p,q) :=  %\frac{\pi^{n/2}}{4^s \Gamma(s+\frac n2 )}
 \int_{0}^\infty  H_M(p,q,t)\,\frac{dt}{t^{1+s/2}}.
 \end{equation}
We then define
 \begin{equation}\label{wethiowhoihw}
 [u]^2_{H^{s/2}(M)} := \iint_{M\times M}(u(p)-u(q))^2 K_s(p,q) \,dV_p \,d V_q.
 \end{equation}

\item[(ii)]  Following a  {\em spectral approach}, we can set
\begin{equation}\label{ghfghfg}
    [u]^2_{H^{s/2}(M)} = \sum_{k\ge 1} \lambda_k^{s/2} \langle u,\phi_k \rangle^2_{L^2(M)}
\end{equation}
 where $\{\phi_k\}_k$ is an orthonormal basis of eigenfunctions of the Laplace-Beltrami operator $(-\Delta_g)$ and $\{\lambda_k\}_k$ are the corresponding eigenvalues. For $s=2$ this immediately recovers the usual $[u]^2_{H^{1}(M)}$ seminorm.

\item[(iii)] Considering a {\em Caffarelli-Silvestre type extension (cf. \cite{CafSi,BGS})}, namely, a degenerate-harmonic extension problem in one extra dimension, we can set
\begin{equation}\label{cafextintro}
[u]^2_{H^{s/2}(M)} = \inf\left\{ \int_{M\times \R_+} z^{1-s} |\ov{\nabla}  U(p,z)|^2 \, dV_p dz \s {\mbox{s.t.}} \s U(x, 0)=u(x) \right\}\,.
\end{equation}

Here $\widetilde \nabla$ denotes the Riemannian gradient of the manifold $\widetilde M = M\times (0, \infty)$, with respect natural product metric $\widetilde g= g + dz\otimes dz$, and the infimum is taken over all  $U$ belonging to the weighted Hilbert space $\widetilde{H}^1(\widetilde{M})$ (see Definition \ref{weighSobspacemfd} for the precise definition of this space, and we refer to Section \ref{ext section} in general for all the basic properties of this extension characterization).
\end{itemize}
\vsp
We will prove that (i)-(iii) define the same norm (not merely equivalent norms) up to explicit multiplicative constants. We emphasize that this gives a canonical definition of the $H^{s/2}(M)$ seminorm on a closed manifold.\\ %(Notice that, for example, there is no canonical definition of a $W^{s,1}(M)$ seminorm) {\color{green} How do we justify that $\iint |u(x)-u(y)| K_s(x,y)\,dx\,dy$ is not its canonical form?}.

Definition (i) through the expression \eqref{wethiowhoihw} will allow us to control precisely the behaviour of the fractional Sobolev energy. See, for example, Lemmas \ref{enboundslemma} and \ref{enboundslemma2}, which show that the fractional Sobolev energy is smooth with quantitative bounds under inner variations. For that, we will give precise quantitative estimates for the kernel $K_s(p,q)$ (defined in \eqref{wethiowhoihw2}) and its derivatives, depending only on local quantities. In particular, we will show that it is comparable to $\frac{1}{d(p,q)^{n+s}}$ if $p$ and $q$ are contained in a Riemannian ball with controlled geometry. We recall that the two kernels coincide in the case $M=\R^n$ (up to a constant factor).\\
The estimates for $K_s$ will follow from corresponding estimates for the heat kernel $H_M$. Albeit of somewhat ``standard flavour", they are hard to find in the literature with this level of precision, and we give an almost completely self-contained account that we believe to be of independent interest.\\

The extension definition (iii) will be used to give a monotonicity formula for stationary points $u$ of semilinear elliptic functionals, that is, of functionals of the form
\begin{equation*}
    \mathcal E(v)=[v]^2_{H^{s/2}(M)}+\int_M F(v) \, dV ,
\end{equation*}
under the assumption that $F\geq 0$. More precisely, $u$ need only be stationary for $\mathcal E(v)$ under inner variations; in particular, setting $F\equiv 0$ will give a monotonicity formula for nonlocal $s$-minimal surfaces, which we will define in a moment. Up to now, the result was known on $\R^n$ by \cite{CRS}, \cite{CC} and \cite{MSW}.
\begin{definition}
Given $s \in (0, 1)$
and a (measurable) set $E \subset M$, we define the \textit{$s$-perimeter of $E$} as
\begin{equation}\label{wbhtouw1}
\textnormal{Per}_s(E): = [\chi_E]^2_{H^{{s/2}}(M)}= \frac{1}{4}[\chi_E-\chi_{E^c}]^2_{H^{{s/2}}(M)}\,, 
\end{equation}
where $\chi_E$ is the characteristic function of $E$ and $E^c: = M\setminus E$.\\
From the estimates that we will prove for $K_s$, one can see that for every set $E\subset M$ with smooth boundary, one has that
$(1-s)\textnormal{Per}_s(E) \to \textnormal{Per}(E)$  as $s\uparrow 1$ (up to a multiplicative dimensional constant, see \cite{BBM01}
and also~\cite{Dav02, CV11, ADPM11}
for further details on the computation in the case of $\R^n$). 
\end{definition}
\begin{definition}
The boundary $\partial E$ of a set $E\subset M$ is said to be an $s$-\textit{minimal surface} if  $\textnormal{Per}_s(E)<\infty$  and, for every smooth vector field $X$ on $M$, we have 
\begin{equation*}
     \frac{d}{dt}\Big|_{t=0} \textnormal{Per}_s(\psi^t_X(E)) = 0 \, ,
\end{equation*}
where $\psi^t_X : M \times \R \to M$ denotes the flow of $X$ at time $t$.
\end{definition}

\begin{remark}
As we prove\footnote{Notice that for functions taking values in $\{\pm 1\}$ the potential part of the energy vanishes and the Sobolev part of the energy gives the fractional perimeter.} in Lemma \ref{enboundslemma} and Lemma \ref{enboundslemma2}, if $\textnormal{Per}_s(E,\Omega)<\infty$ and $X\in \mathfrak X_c(\mathcal U)$ is such that  ${\rm spt}(X) \subset \overline\Omega$ then the map $t\mapsto \textnormal{Per}_s(\psi^t_X(E), \Omega)$ is well-defined for all $t$ and of class $C^{\infty}$. Thus, the previous definitions are meaningful. 
\end{remark}

In other words, $\partial E$ is an $s$-minimal surface if $u=\chi_E-\chi_{E^c}$ is stationary under inner variations for $\mathcal E(v)=[v]^2_{H^{s/2}(M)}$. The general monotonicity formula claimed before is the following (see Section \ref{GenMonFormula} for the precise definitions and notation):
\begin{theorem}[\textbf{Monotonicity formula}]
Let $(M^n, g)$ be an $n$-dimensional, closed Riemannian manifold. Let $s \in (0,2)$ and
\begin{equation*}
    \mathcal E(v)=[v]^2_{H^{s/2}(M)}+\int_M F(v) \, dV ,
\end{equation*}
 where $F$ is any smooth nonnegative function. Let $u:M\to \R $ be stationary for $\mathcal{E}$ under inner variations, meaning that  $\mathcal E(u)<\infty$ and for any smooth vector field $X$ on $M$ there holds $\frac{d}{dt}\big|_{t=0} \mathcal{E}(u\circ\psi_X^t)=0$, where $\psi_X^t$ is the flow of $X$ at time $t$. For $(p_\circ,0) \in \ov{M} $ and $R>0$ define 
\begin{equation*}
    \Phi(R) := \frac{1}{R^{n-s}} \left( \beta_s \int_{\widetilde{B}^+_R(p_\circ,0)} z^{1-s}| \nabla  U(p,z)|^2 \, dV_p dz +  \int_{B_R(p_\circ)} F(u) \, dV \right) , 
    \end{equation*}
where $U$ is the unique solution (see Theorem \ref{extmfd}) in $\widetilde H^1(\widetilde M)$ to
$$\begin{cases} \widetilde{ {\rm div}}(z^{1-s} \widetilde \nabla U) = 0  &   \mbox{in} \s \widetilde{M} \,,  \\ U(p,0) = u(p) &  \mbox{for} \s p \in \partial \widetilde{M} = M\, .  \end{cases}$$
Then, there exists a positive constant $C$ with the following property: whenever $R_\circ \le \inj_{M}(p_\circ)/4$ and $K$ is an upper bound for all the sectional curvatures of $M$ in $B_{R_\circ}(p_\circ)$, then
\begin{equation*}
    R \mapsto \Phi(R)e^{C \sqrt{K} R } \s \textit{is nondecreasing for} \s R < R_\circ \,,
\end{equation*}
and the inequality
\begin{align*}
    \Phi'(R) \ge - C \sqrt{K} \Phi(R)+\frac{ s }{R^{n-s+1}} \int_{B_R(p_\circ)} F(u) \, dV + \frac{2\beta_s}{R^{n-s}} \int_{\partial^+ \ov{B}_R^+(p_\circ,0)} z^{1-s} \lp \nabla U , \nabla d \rp ^2 \, d\,\widetilde{\sigma}
\end{align*}
holds for all $R < R_0 $, with $d(\cdot) = d_{\ov{g}}((p_\circ,0), \,\cdot\, )$ the distance function on $\ov{M}$ from the point $(p_\circ, 0)$.

\vsp
Moreover, in the particular case where $M=\R^n$, $F\equiv 0$, $s\in(0,1)$, and $u=\chi_E-\chi_{E^c}$ where $E$ is an $s$-minimal surface, there holds
\begin{align*}
    \Phi'(R) = \frac{2\beta_{s}}{R^{n-s}} \int_{\partial^+ \ov{\B}_R^+(p_\circ,0)} z^{1-s} \lp \nabla U , \nabla d \rp ^2 \, dxdz \ge 0 \,,
\end{align*}
which shows that $\Phi$ is nondecreasing and that it is constant if and only if $E$ is a cone. 
\end{theorem}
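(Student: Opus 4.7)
The plan is to use the extension characterization (iii) to reduce the monotonicity to a local Pohozaev-type identity on $\widetilde M = M\times\R_+$, controlling the Riemannian corrections via Hessian comparison. By (iii) the energy equals $\beta_s\int_{\widetilde M}z^{1-s}|\widetilde\nabla U|^2\,dV + \int_M F(u)\,dV$, and since $u\mapsto U$ commutes with diffeomorphisms of $M$ (by uniqueness in \eqref{cafextintro}), the stationarity hypothesis on $u$ translates into stationarity of the combined energy on $\widetilde M$ under inner variations generated by any smooth vector field $\widetilde Y$ tangent to $\partial\widetilde M=M$ (i.e.\ $\widetilde Y^z=0$ on $\{z=0\}$). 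A standard first-variation calculation then gives the identity
\begin{equation*}
    0 = \beta_s\int_{\widetilde M} z^{1-s}\bigl(|\widetilde\nabla U|^2 \widetilde{\dive}\,\widetilde Y - 2\widetilde g(\widetilde\nabla_{\widetilde\nabla U}\widetilde Y, \widetilde\nabla U)\bigr)\,dV + (1-s)\beta_s\int_{\widetilde M} z^{-s}\widetilde Y^z|\widetilde\nabla U|^2\, dV + \int_M F(u)\,\dive_M Y\, dV,
\end{equation*}
valid for any such $\widetilde Y$.

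The key step is to plug into this identity the radial cutoff $\widetilde Y_\varepsilon = \eta_\varepsilon(d)\widetilde\nabla(d^2/2)$, where $d=d_{\widetilde g}((p_\circ,0),\,\cdot\,)$ is the Riemannian distance on $\widetilde M$ and $\eta_\varepsilon\in C^\infty([0,\infty))$ is a decreasing cutoff with $\eta_\varepsilon\nearrow\chi_{[0,R]}$ as $\varepsilon\to 0$. On the product metric, $d^2=d_M^2+z^2$, so $\widetilde\nabla(d^2/2)=d_M\nabla_M d_M + z\partial_z$ has no $z$-component on $\{z=0\}$ and $\widetilde Y_\varepsilon$ is admissible. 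Hessian comparison in $\widetilde B_{R_\circ}^+$ (valid because $R_\circ\le\inj_M(p_\circ)/4$) gives $\widetilde\nabla^2(d^2/2)=\widetilde g + E$ with $|E|\le CKd^2$, and analogously $\Delta_M(d_M^2/2) = n + e_M$ with $|e_M|\le CKd_M^2$. Computing
\begin{equation*}
\widetilde{\dive}\,\widetilde Y_\varepsilon = \eta_\varepsilon[(n+1)+\mathrm{tr}(E)] + \eta_\varepsilon' d,\qquad \widetilde Y_\varepsilon^z = \eta_\varepsilon z,
\end{equation*}
\begin{equation*}
\widetilde g(\widetilde\nabla_{\widetilde\nabla U}\widetilde Y_\varepsilon, \widetilde\nabla U) = \eta_\varepsilon[|\widetilde\nabla U|^2 + E(\widetilde\nabla U, \widetilde\nabla U)] + \eta_\varepsilon' d\lp\widetilde\nabla U, \widetilde\nabla d\rp^2,
\end{equation*}
substituting into the first-variation identity, using the algebraic simplification $(1-s)+(n+1)-2 = n-s$, and passing to the limit $\eta_\varepsilon\to\chi_{[0,R]}$ (so $\eta_\varepsilon'\to -\delta_R$ produces boundary integrals on $\partial^+\widetilde B_R^+$ and $\partial B_R(p_\circ)$), one obtains the Pohozaev-type identity
\begin{align*}
    \beta_s R\int_{\partial^+\widetilde B_R^+} z^{1-s}|\widetilde\nabla U|^2 d\widetilde\sigma &= 2\beta_s R\int_{\partial^+\widetilde B_R^+} z^{1-s}\lp\widetilde\nabla U, \widetilde\nabla d\rp^2 d\widetilde\sigma + (n-s)\beta_s\int_{\widetilde B_R^+} z^{1-s}|\widetilde\nabla U|^2 dV \\
    &\quad + n\int_{B_R(p_\circ)} F(u)\, dV - R\int_{\partial B_R(p_\circ)} F(u)\, d\sigma + \mathrm{Err}(R),
\end{align*}
with $|\mathrm{Err}(R)|\le CKR^2\cdot R^{n-s}\Phi(R) \le C\sqrt K\, R\cdot R^{n-s}\Phi(R)$, the latter using $\sqrt K\, R \lesssim 1$ (from $R_\circ\le\inj_M(p_\circ)/4$).

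Combining this with the direct differentiation
\begin{equation*}
\Phi'(R) = \frac{s-n}{R}\Phi(R) + \frac{\beta_s}{R^{n-s}}\int_{\partial^+\widetilde B_R^+} z^{1-s}|\widetilde\nabla U|^2 d\widetilde\sigma + \frac{1}{R^{n-s}}\int_{\partial B_R(p_\circ)} F(u)\, d\sigma,
\end{equation*}
the two $\int_{\partial B_R(p_\circ)} F$ contributions cancel and the $\int_{B_R(p_\circ)} F$ contributions combine with coefficient $[n-(n-s)]/R^{n-s+1} = s/R^{n-s+1}$, producing the claimed inequality with the $-C\sqrt K\,\Phi(R)$ term absorbing the curvature error; the monotonicity of $R\mapsto \Phi(R)e^{C\sqrt K R}$ then follows by Grönwall. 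In the Euclidean case $M=\R^n$, $K=0$ makes $E$ and $\mathrm{Err}$ vanish identically and the inequality becomes an equality; if additionally $F\equiv 0$ and $\Phi$ is constant, the nonnegativity of the boundary integrand forces $\lp\widetilde\nabla U, \widetilde\nabla d\rp\equiv 0$ on $\widetilde B_{R_\circ}^+$, so $U$ is $0$-homogeneous under dilations from $(p_\circ, 0)$, whence $u=\chi_E-\chi_{E^c}$ is $0$-homogeneous and $E$ is a cone.

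The most delicate point will be the Pohozaev derivation: the divergence manipulations must be justified rigorously in the weighted Sobolev setting ($U$ has only limited regularity near $\{z=0\}$ because of the degenerate weight $z^{1-s}$), and passing to the limit $\eta_\varepsilon\to\chi_{[0,R]}$ will rely on the smooth dependence of the energy on inner variations established in Lemmas \ref{enboundslemma}--\ref{enboundslemma2}. Careful bookkeeping is also needed to lump every curvature correction into a single $C\sqrt K\,\Phi(R)$ term.
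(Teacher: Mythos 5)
Your overall strategy coincides with the paper's: transfer stationarity to the extension energy on $\widetilde M$, perform a Pohozaev-type computation with the radial vector field $\eta(d)\,d\,\widetilde\nabla d$, control the Riemannian corrections via Hessian and Laplacian comparison (Lemmas \ref{lemmaaux11} and \ref{divestlem}), and combine with the direct differentiation $\Phi'(R)=\tfrac{s-n}{R}\Phi(R)+\cdots$; the Euclidean sharp case and cone-rigidity are likewise identical. The algebraic bookkeeping $(1-s)+(n+1)-2=n-s$, the cancellation of the $\int_{\partial B_R}F$ contributions, and the resulting $s/R^{n-s+1}$ coefficient on $\int_{B_R}F$ all match the paper's Step 2.

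There is, however, a genuine gap in your transfer step. You assert that ``$u\mapsto U$ commutes with diffeomorphisms of $M$ (by uniqueness in \eqref{cafextintro})''; this is false. If $\psi$ is a non-isometric diffeomorphism of $\widetilde M$ fixing $M\times\{0\}$, then $U\circ\psi^{-1}$ solves a \emph{pulled-back} degenerate equation (with the deformed metric) and is generically \emph{not} the extension of $(u\circ\psi^{-1})|_M$---uniqueness in $\widetilde H^1(\widetilde M)$ gives no such equivariance. What does hold, and what the paper's Step 1 proves by a two-sided competitor argument, is the weaker envelope-type identity that first derivatives agree at $t=0$: since $U\circ\psi_Y^{-t}$ is a competitor in the minimization defining the extension of its own trace $u\circ\psi^{-t}_{Y|_M}$, and $U$ is the actual minimizer at $t=0$, one sandwiches from both sides to obtain
\[
\frac{d}{dt}\Big|_{t=0}\int_{\widetilde M}z^{1-s}\big|\widetilde\nabla\big(U\circ\psi_Y^{-t}\big)\big|^2\,dVdz
=\frac{1}{2\beta_s}\,\frac{d}{dt}\Big|_{t=0}\big[u\circ\psi^{-t}_{Y|_M}\big]^2_{H^{s/2}(M)}\,.
\]
Without this sandwiching step the reduction to stationarity of the extension energy is unjustified. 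A further minor point: the quadratic estimate $|E|\le CKd^2$ for $\widetilde\nabla^2(d^2/2)-\widetilde g$ is stronger than the linear bound $\sqrt K\,d$ that Lemma \ref{lemmaaux11} yields under the stated one-sided curvature hypothesis; since you only ever use $C\sqrt K R$ afterwards, you should quote the linear bound directly rather than the unproven quadratic one.
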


\subsection{Overview of the kernel estimates}

Here is an overview of the estimates for the heat kernel $H_M$ and the singular kernel $K_s$ (defined in \eqref{wethiowhoihw2}) that will be proved in \autoref{Ker prop section}. In particular, the reader is advised to consult \autoref{prop:kern1}, which records several of the main results for $K_s$, including an explicit asymptotic expansion for short distances.

\begin{center}
\begin{tabular}{ |c|c|c| } 
\hline
 & Heat kernel $H_M$ & Singular kernel $K_s$ \\
\hline
Global comparability on $(\R^n,g)$ & \autoref{globcomparability} & \autoref{globcomparability} \\  \hline
Short distance comparability & \autoref{lemaux0}, \autoref{lemaux2} & \autoref{loccomparability} \\  \hline
Long distance estimates & \autoref{lemaux1}, \autoref{lemaux12}  & \autoref{prop:kern1}, \autoref{prop:kern2}  \\ 
\hline
Precise asymptotics & \autoref{lemaux3} &  \autoref{prop:kern1} \\ 
\hline
\end{tabular}
\end{center}

\section{The fractional Laplacian on a closed manifold \texorpdfstring{$(M,g)$}{}}\label{LapSection}
Throughout the paper (unless otherwise stated) $(M,g)$ will be a closed (i.e. compact and without boundary) Riemannian manifold of dimension $n$.\\
Taking inspiration from the case of $\R^n$, in this section, we give several equivalent definitions for the fractional Laplacian $(-\Delta)^{s/2}$ on a closed Riemannian manifold, with $s \in (0,2)$.

\subsection{Spectral and singular integral definitions}

The fractional Laplacian $(-\Delta)^{s/2}$ can be defined as the $s/2$-th power (in the sense of spectral theory) of the usual Laplace-Beltrami operator on a Riemannian manifold, through Bochner's subordination.

\vsp
Given $\lambda >0$ and $s \in (0,2)$, the following numerical formula holds:
\begin{equation}\label{numeric1}
    \lambda^{s/2} = \frac{1}{\Gamma(-s/2)} \int_{0}^{\infty} (e^{-\lambda t}-1)\frac{dt}{t^{1+s/2}} \, .
\end{equation}
Formally applying the above relation to the operator $L=(-\Delta)$ in place of $\lambda$, one obtains the following definition for the fractional Laplacian.
\begin{definition}[\textbf{Spectral definition}]
Let $s\in(0,2)$. The fractional Laplacian $(-\Delta)^{s/2}$ is the operator that acts on smooth functions $u$ by
\begin{equation}\label{boclap}
   (-\Delta)^{s/2} \, u = \frac{1}{\Gamma(-s/2)} \int_{0}^{\infty} (e^{t\Delta}u-u)\frac{dt}{t^{1+s/2}}\, .
\end{equation}
Here, the expression $e^{t\Delta}u$ is to be understood as the solution of the heat equation on $M$ at time $t$ and with initial datum $u$. 

\vsp 
From now on, to denote the solution of the heat equation with initial datum $u$, we will write $P_t u$ in place of $e^{t\Delta } u $.
%Since the exponential of bounded operators between Banach spaces is always well defined, heuristically it is reasonable that doing so the right hand side of \eqref{boclap} converges. Indeed, the convergence of the right hand side of \eqref{boclap} can be shown for a wide class of $u$,  
\end{definition}
\begin{remark}
On a closed Riemannian manifold a closely related definition of the fractional Laplacian is available: if $\{\phi_k\}_{k=1}^{\infty} $ is an $L^2(M)$ orthonormal basis of eigenfunctions for $(-\Delta) $ with eigenvalues
\begin{equation*}
    0 < \lambda_1 < \lambda_2 \le \dotsc \le \lambda_k \conv{k \to \infty} +\infty
\end{equation*}
and $u$ is a smooth function then
\begin{equation*}
    (-\Delta)^{s/2} u = \sum_{k=1}^\infty \lambda_k^{s/2} \lp u, \phi_k\rp_{L^2(M)} \phi_k \,. 
\end{equation*}
Since the solution to the heat equation on $M$ with initial datum an eigenfunction $\phi_k$ is given by $e^{t\Delta}\phi_k = e^{-\lambda_k t}\phi_k$, the above definition is easily shown to be identical to \eqref{boclap} by first observing that they coincide for eigenfunctions (thanks to \eqref{numeric1}), and then extending the result by approximation. In \cite{Stinga} (see also \cite{CafStinga}) all the details of this equivalence are carried out in the case of certain positive second order operators with discrete spectrum on a domain $\Omega \subset \R^n$. In our case of $(-\Delta)$ on a closed Riemannian manifold, the proof is then completely analogous. Nevertheless, this characterization will not be used in what follows and is given only as complementary information.
\end{remark}

The second definition for the fractional Laplacian, closely related to the spectral one, expresses it as a singular integral. It will be our working definition in a substantial portion of the article.
\begin{definition}[\textbf{Singular integral definition}]
The fractional Laplacian $(-\Delta)^{s/2}$ of order (of differentiation) $s \in (0,2)$ is the operator that acts on a regular function $u$ by
\begin{align}\label{singintlap}
    (-\Delta)^{s/2} u(p) &= p.v. \int_{M} (u(p)-u(q))K_s(p,q)\, dV_q\\
    &:=\lim_{\ep\to 0}\int_{M} (u(p)-u(q))K_s^{\ep}(p,q) \, dV_q \,. \nonumber
\end{align}
Here $K_s(p,q) : M \times M \to \R$ denotes the singular kernel given by\footnote{Note that $\frac{1}{|\Gamma(-s/2)|}= \frac{s/2}{\Gamma(1-s/2)}$. }
\begin{equation}\label{kerneleq}
    K_s(p,q)=\frac{s/2}{\Gamma(1-s/2)} \int_{0}^{\infty} H_M(p,q,t) \frac{dt}{t^{1+s/2}}
\end{equation}
where $H_M:M\times M \times (0,\infty)\to \R$ denotes the usual heat kernel on $M$,
and $K_s^{\ep}(p,q)$ is the natural regularization
\begin{equation}\label{app ker K}
    K_s^{\ep}(p,q)=\frac{s/2}{\Gamma(1-s/2)} \int_{0}^{\infty} H_M(p,q,t) e^{-\ep^2/4t} \frac{dt}{t^{1+s/2}}\, .
\end{equation}
\end{definition}

\begin{remark}\label{cvcvcvcv}
If the compact manifold $M$ is replaced by the Euclidean space $\R^n$ then 
\begin{equation*}
    K_{s}(x,y)=\frac{s/2}{\Gamma(1-s/2)} \int_{0}^{\infty} H_{\R^n}(x,y,t) \frac{dt}{t^{1+s/2}} = \frac{s/2}{\Gamma(1-s/2)} \int_{0}^{\infty} \left(  \frac{1}{(4\pi t)^{\frac{n}{2}}} e^{-\frac{|x-y|^2}{4t}}\right) \frac{dt}{t^{1+s/2}} = \frac{\alpha_{n,s}}{|x-y|^{n+s}} \,,
\end{equation*}
where 
\begin{equation}\label{alphadef}
    \alpha_{n,s}= \frac{2^s \Gamma\Big(\tfrac{n+s}{2}\Big)}{\pi^{ n/2} |\Gamma(-s/2)| } = \frac{s 2^{s-1} \Gamma\Big(\tfrac{n+s}{2}\Big)}{\pi^{n/2}\Gamma(1-s/2)}.
\end{equation}
Hence, we recover the usual form of the fractional Laplacian on $\R^n$.
\end{remark}

Let us briefly comment on our choice of the "natural" regularization $K_s^{\ep}(p,q)$ that we have used to define the principal value $(p.v.)$ in \eqref{singintlap}. First, we will see in the proof of \eqref{lapextmfd} that this approximation naturally appears in the computation. This is because $K_s^{\ep}(p,q)$ is directly related to the fractional Poisson kernel $\mathbb{P}_{\widetilde M}(p,q,z)$ of $\widetilde M := M\times (0, +\infty)$ by the formula
    \begin{equation*}
        K_s^{\ep}(p,q) = \mathbb{P}_{\widetilde M}(p,q,\ep)\ep^{-s} \,,
    \end{equation*}
    and the fractional Poisson kernel is the fundamental solution of the Caffarelli-Silvestre extension problem. 

    \vsp
    Moreover, if the compact manifold $M$ is replaced by the Euclidean space $\R^n$ then 
    \begin{equation*}
        K_s^{\ep}(x,y) = \frac{s/2}{\Gamma(1-s/2)} \int_{0}^{\infty} H_{\R^n}(x,y,t) e^{-\ep^2/4t}\frac{dt}{t^{1+s/2}} = \frac{\alpha_{n,s}}{(|x-y|^2+\ep^2)^{\frac{n+s}{2}}} \,,
    \end{equation*}
    which is arguably a very natural regularization of $\frac{\alpha_{n,s}}{|x-y|^{n+s}}$, and is easily seen to give the same notion of principal value that one would get by integrating $K_s(x,y)$ against a function on $\R^n\setminus B_{\ep}(y)$ and then taking $\ep\to 0^+$. This is also true on a Riemannian manifold, and actually many other desingularizations of the (singular) kernel $K_s(p,q)$ are possible and give the same notion of principal value $(p.v.)$ under mild hypotheses:
\begin{proposition}\label{prop:pvequiv}
Let $(M,g)$ be a closed, $n$-dimensional Riemannian manifold, and let $\{K_s^\ep\}_{\ep >0}$ be a family of nonnegative kernels defined on $L^\infty(M)$. Assume that the following hold:
\begin{itemize}
    \item The $K_s^\ep$ converge uniformly to $K_s(p,\cdot)$ on compact subsets of $M\setminus\{p\}$, as $\ep\to 0^+$.
    \item There exist some $r=r(p)>0$ and some chart parametrization $\varphi:\B_{r}\subset\R^n\to M$ with $\varphi(0)=p$ such that:
    \begin{itemize}
        \item[(i)] The flatness assumptions ${\rm FA}_2(M,g, r,p,\varphi)$ are satisfied (see Definition \ref{flatnessassup}).
        \item[(ii)] Setting $\widetilde K_s^\ep(y):=K_s^\ep(p,\varphi(y))$, there is some positive constant $C$ such that, for all $y\in \B_{r_p}$,
        \begin{equation}\label{kte1}
            \widetilde K_s^\ep(y)\leq \frac{C}{|y|^{n+s}}
        \end{equation}
        and moreover, the symmetry condition
        \begin{equation}\label{kte2}
            \Big|\widetilde K_s^\ep(y)-\widetilde K_s^\ep(-y)\Big|\leq \frac{C}{|y|^{n+s-1}}
        \end{equation}
        is satisfied.
    \end{itemize} 
\end{itemize}
Then, for every $f\in C^\infty(M)$, the limit $\lim_{\ep\to 0}\int_M (f(p)-f(q)) K_s^\ep(p,q) \, dV_q$ exists and is independent of the family $K_s^\ep$. In particular, any such family gives the same value for \eqref{singintlap} as the choice in \eqref{app ker K}.
\end{proposition}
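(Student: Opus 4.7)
The plan is to localize the integral around $p$ using the given chart, handle the region far from $p$ by the uniform convergence hypothesis, and handle the region near $p$ through the classical principal-value symmetrization made available by \eqref{kte1}--\eqref{kte2}.

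First, I would fix $0<\delta<r$ and split
\[
I^\ep[f]:=\int_M(f(p)-f(q))K_s^\ep(p,q)\,dV_q=\mathcal F_\delta^\ep+\mathcal N_\delta^\ep,
\]
where $\mathcal F_\delta^\ep$ denotes the integral over $M\setminus\varphi(\B_\delta)$ and $\mathcal N_\delta^\ep$ the integral over $\varphi(\B_\delta)$. For the far part, since $M\setminus\varphi(\B_\delta)$ is a compact set disjoint from $p$ and $f$ is bounded, the assumed uniform convergence of $K_s^\ep(p,\cdot)$ to $K_s(p,\cdot)$ immediately gives $\mathcal F_\delta^\ep\to\int_{M\setminus\varphi(\B_\delta)}(f(p)-f(q))K_s(p,q)\,dV_q$ as $\ep\to 0^+$, a limit that depends only on $K_s$ and not on the chosen family.

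For the near part, I would pass to coordinates: write $q=\varphi(y)$ and set $a(y):=(f(p)-f(\varphi(y)))J(y)$, where $J$ is the smooth positive volume density so that $dV_q=J(y)\,dy$. The function $a$ is smooth on $\B_r$ and vanishes at the origin, so a first-order Taylor expansion yields $|a_o(y)|\le C|y|$ and $|a_e(y)|\le C|y|^2$ for $y\in\B_\delta$, where $a_e,a_o$ denote the even and odd parts of $a$ under $y\mapsto -y$. Splitting $\widetilde K_s^\ep=\widetilde K_{s,e}^\ep+\widetilde K_{s,o}^\ep$ analogously, the two cross terms integrate to zero on the symmetric ball $\B_\delta$, and
\[
\mathcal N_\delta^\ep=\int_{\B_\delta}\Bigl[a_e(y)\widetilde K_{s,e}^\ep(y)+a_o(y)\widetilde K_{s,o}^\ep(y)\Bigr]\,dy.
\]
Hypothesis \eqref{kte1} gives $|\widetilde K_{s,e}^\ep(y)|\le C|y|^{-n-s}$, while \eqref{kte2} gives $|\widetilde K_{s,o}^\ep(y)|\le C|y|^{-n-s+1}$; the full integrand is therefore controlled uniformly in $\ep$ by $C|y|^{2-n-s}$, which is integrable on $\B_\delta$ precisely because $s<2$.

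Finally, by the uniform convergence on compact subsets of $M\setminus\{p\}$, the pointwise limit $\widetilde K_s^\ep(y)\to K_s(p,\varphi(y))$ exists for every $y\neq 0$, so the same holds for its even and odd parts. Dominated convergence then produces a well-defined limit for $\mathcal N_\delta^\ep$ expressible purely in terms of $K_s$, and combining with the limit of $\mathcal F_\delta^\ep$ shows that $\lim_{\ep\to 0^+}I^\ep[f]$ exists and is independent of the family $\{K_s^\ep\}$. Since the canonical family in \eqref{app ker K} satisfies the hypotheses (thanks to the heat-kernel estimates culminating in \autoref{prop:kern1}), the common value coincides with \eqref{singintlap}. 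The only mildly delicate point is that the symmetrization is performed with respect to the chart's Euclidean involution $y\mapsto -y$ rather than any geodesic involution on $M$; this is harmless because all of the near-part estimates are expressed in the chart, where $\B_\delta$ is symmetric by construction.
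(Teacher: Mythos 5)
Your proof is correct and follows essentially the same path as the paper: split the integral at scale $\delta$, use the uniform-convergence hypothesis on the far piece, and on the near piece exploit symmetry in the chart (your even/odd decomposition is algebraically equivalent to the paper's first-order Taylor expansion plus averaging over $y\mapsto -y$) so that the surviving integrand is dominated by $C|y|^{2-n-s}$. The only cosmetic difference is that the paper bounds the near integral by $C\delta^{2-s}$ uniformly in $\ep$ and then sends $\delta\to 0$, whereas you invoke dominated convergence on the near piece at fixed $\delta$ --- both deliver the same conclusion.
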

\begin{remark}
    As discussed above, this covers the case of removing a geodesic ball $B_{\ep}(p)$ in the corresponding definition of the fractional Laplacian as an integral and then sending $\ep$ to $0$, as in the usual Euclidean definition of a principal value integral. Indeed, this corresponds to considering $K_s^\ep:=K_s(p,q)\chi_{M\setminus B_{\ep}(p)}$ in the Proposition above. Another reasonable desingularization could be
    \begin{equation*}
    K_s^{\ep}(p,q)=\frac{s/2}{\Gamma(1-s/2)} \int_{\ep}^{\infty} H_M(p,q,t) \frac{dt}{t^{1+s/2}} \,. 
    \end{equation*}
    The fact that both of these choices for the families $\{K_s^\ep\}_\ep$, as well as the choice \eqref{app ker K}, satisfy the hypotheses in Proposition \ref{prop:pvequiv}, can be easily seen using the results that will appear in the next section. More precisely, they follow from the combination of Remark \ref{fbsvdg} and (the proof of) estimate \eqref{remaining0} from Theorem \ref{prop:kern1}. The latter shows that conditions \eqref{kte1} and \eqref{kte2} hold directly for the kernel $K_s$ thanks to precise estimates on the heat kernel $H_M$, and it is then simple to see that they hold for the regularisations $K_s^{\ep}$ as well.
\end{remark}
\begin{proof}[Proof of Proposition \ref{prop:pvequiv}]
    Recall that $\varphi^{-1}(p)=0$. Given $0<\delta<r_p$, we can write
    \begin{align*}
        \int_M (f(p)-f(q))K_s^\ep(p,q)\, dV_q&= \int_{M\setminus \varphi(\B_\delta)} (f(p)-f(q))K_s^\ep(p,q)\, dV_q+\int_{\varphi(\B_\delta)} (f(p)-f(q))K_s^\ep(p,q)\, dV_q\,.
    \end{align*}
    
    By the dominated convergence theorem, the first term on the RHS converges to 
    \begin{equation*}
        \int_{M\setminus \varphi(\B_\delta)} (f(p)-f(q))K_s(p,q)\, dV_q \,.
    \end{equation*}
 Therefore
    \begin{align*}
        \limsup_{\ep\to 0} \Big|\int_M (f(p)-f(q)) & K_s^\ep(p,q)\, dV_q-\int_{M\setminus \varphi(\B_\delta)} (f(p)-f(q))K_s(p,q)\, dV_q\Big| \\ & \leq \limsup_{\ep\to 0}\Big|\int_{\varphi(\B_\delta)} (f(p)-f(q))K_s^\ep(p,q)\, dV_q\Big| \,.
    \end{align*}
    In other words, to conclude our desired result, it suffices to show that $\int_{\varphi(\B_\delta)} (f(p)-f(q))K_s^\ep(p,q)\, dV_q$ is bounded independently of $\ep$ and moreover can be made arbitrarily small by choosing $\delta$ small enough.

    \vsp 
    To check this, we start by changing variables using the coordinates given by $\varphi$, leading to
    \begin{align*}
        \int_{\varphi(\B_\delta)} (f(p)-f(q))K_s^\ep(p,q)\, dV_q&=\int_{\B_\delta} (f(\varphi(0))-f(\varphi(y))) \widetilde K_s^\ep(y)\sqrt{|g|}(y)\,dy\,.
    \end{align*}
    Defining $h(y):=(f(\varphi(0))-f(\varphi(y)))\sqrt{|g|}(y)$, which verifies that $h(y)=y\cdot \nabla h(0)+O(|y|^2)$, and using the symmetry of the Lebesgue measure under the transformation $y\mapsto (-y)$, we can then compute
    \begin{align*}
    \Big|\int_{\varphi(\B_\delta)} (f(p)-f(q))K_s^\ep(p,q)\, dV_q\Big|&=\Big|\int_{\B_{\delta}} h(y) \widetilde K_s^\ep(y)\,dy\Big|\\
    &\leq\Big|\int_{\B_{\delta}} y\cdot \nabla h(0) \widetilde K_s^\ep(y)\,dy\Big|+C\int_{\B_{\delta}} |y|^2 \widetilde K_s^\ep(y)\,dy\\
    &= \Big|\frac{1}{2} \int_{\B_{\delta}} y\cdot \nabla h(0) \widetilde K_s^\ep(y)\,dy+\frac{1}{2} \int_{\B_{\delta}} (-y)\cdot \nabla h(0) \widetilde K_s^\ep(-y)\,dy\Big|\\
    &\hspace{1cm}+C\int_{\B_{\delta}} |y|^2 \widetilde K_s^\ep(y)\,dy\\
    &=\frac{1}{2} \Big|\int_{\B_{\delta}} y\cdot \nabla h(0) (\widetilde K_s^\ep(y)-\widetilde K_s^\ep(-y))\,dy\Big|+C\int_{\B_{\delta}} |y|^2 \widetilde K_s^\ep(y)\,dy
    \end{align*}
    Using the assumptions on the kernel, we conclude that
    \begin{align*}
    \Big|\int_{\varphi(\B_\delta)} (f(p)-f(q))K_s^\ep(p,q)\, dV_q\Big|&\leq C\int_{\B_{\delta}} |y\cdot \nabla h(0)| \frac{1}{|y|^{n+s-1}}\,dy+C\int_{\B_{\delta}} |y|^2\frac{1}{|y|^{n+s}}\,dy\\
    &\leq C\int_{\B_{\delta}} \frac{1}{|y|^{n+s-2}}\,dy\\
    &\leq C\delta^{2-s}\,.
    \end{align*}
    Since $s\in(0,2)$, this quantity can be made arbitrarily small by choosing $\delta$ small enough, independently of $\ep$. This concludes the proof of our result.
\end{proof}

We now show the equivalence between the spectral and singular integral definitions for the fractional Laplacian.
\begin{proposition}
    For every $s\in (0,2)$ definitions \eqref{boclap} and \eqref{singintlap} coincide, meaning that:
    \begin{itemize}
        \item[$(i)$] For $u \in C^\infty(M)$ they coincide pointwise everywhere.
        \item[$(ii)$] For $u\in L^2(M)$ they coincide as distributions (i.e. in duality with $C^\infty(M)$).
    \end{itemize}
\end{proposition}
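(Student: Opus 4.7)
The plan is to derive the singular-integral definition from the spectral one by a Fubini argument on the heat-semigroup representation, and then extend to $L^2$ by duality.

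The starting observation is the algebraic identity $\frac{s/2}{\Gamma(1-s/2)}=-\frac{1}{\Gamma(-s/2)}=\frac{1}{|\Gamma(-s/2)|}$ (from $\Gamma(1-s/2)=-\tfrac{s}{2}\Gamma(-s/2)$), which reconciles the constants in \eqref{boclap} and \eqref{singintlap}. Combined with $\int_M H_M(p,q,t)\,dV_q=1$, this allows rewriting $u(p)-P_tu(p)=\int_M H_M(p,q,t)(u(p)-u(q))\,dV_q$, so the spectral expression \eqref{boclap} becomes
\begin{equation*}
\frac{1}{|\Gamma(-s/2)|}\int_0^\infty\!\!\int_M H_M(p,q,t)\bigl(u(p)-u(q)\bigr)\,dV_q\,\frac{dt}{t^{1+s/2}}\,.
\end{equation*}
The task is then to swap the order of integration and identify the inner $t$-integral with $K_s^\ep$ before sending $\ep\to 0^+$.

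For part $(i)$, fix $u\in C^\infty(M)$ and $\ep>0$. The Gaussian damping $e^{-\ep^2/4t}$ removes the small-$t$ divergence, so the integrand $H_M(p,q,t)(u(p)-u(q))e^{-\ep^2/4t}/t^{1+s/2}$ is absolutely integrable on $M\times(0,\infty)$; Fubini then yields
\begin{equation*}
\int_M (u(p)-u(q))K_s^\ep(p,q)\,dV_q=\frac{1}{|\Gamma(-s/2)|}\int_0^\infty\bigl(u(p)-P_tu(p)\bigr)e^{-\ep^2/4t}\frac{dt}{t^{1+s/2}}\,.
\end{equation*}
Sending $\ep\to 0^+$ by dominated convergence closes the loop: the bounds $|u(p)-P_tu(p)|\le Ct$ for small $t$ (from Taylor-expanding $P_tu=u+t\Delta u+\cdots$) and $|u(p)-P_tu(p)|\le 2\|u\|_\infty$ for large $t$ make $|u(p)-P_tu(p)|/t^{1+s/2}$ an $L^1(0,\infty)$ majorant precisely because $s\in(0,2)$.

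For part $(ii)$, both operators extend to distributions on $L^2(M)$ by duality: $\langle(-\Delta)^{s/2}u,\varphi\rangle:=\langle u,(-\Delta)^{s/2}\varphi\rangle$ for $\varphi\in C^\infty(M)$. This is licit because each side is formally self-adjoint --- the spectral side via self-adjointness of every $P_t$ on $L^2(M)$, preserved after integration in $t$; the singular-integral side via the symmetry $K_s^\ep(p,q)=K_s^\ep(q,p)$ and Fubini, which recasts the pairing $\int_M [\int_M (u(p)-u(q))K_s^\ep(p,q)\,dV_q]\varphi(p)\,dV_p$ as a symmetric double integral in $p$ and $q$. Since part $(i)$ gives $(-\Delta)^{s/2}_{\mathrm{spec}}\varphi=(-\Delta)^{s/2}_{\mathrm{sing}}\varphi$ pointwise for smooth $\varphi$, the two distributional objects coincide.

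The main obstacle I anticipate is the bookkeeping in the $\ep\to 0^+$ limit of part $(i)$. Since $K_s(p,q)\sim 1/d(p,q)^{n+s}$ is not locally integrable for $s\ge 1$, one cannot interchange limit and spatial integral naively; the fix is to perform the passage on the $t$-side, where the heat-semigroup cancellation $u(p)-P_tu(p)=O(t)$ is manifest, rather than on the spatial side, where the same cancellation is hidden inside the principal value.
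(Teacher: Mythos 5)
Your proposal follows essentially the same route as the paper: rewriting $u-P_tu$ via mass conservation of the heat kernel, applying Fubini at the regularized $\ep>0$ level to obtain the identity with $K_s^\ep$, passing to the limit $\ep\to 0^+$ using the Taylor bound $|u-P_tu|=O(t)$ to supply an $L^1$ majorant, and for part $(ii)$ exploiting self-adjointness of $P_t$ and symmetry of $K_s^\ep$ to transfer the operator onto the test function and reduce to part $(i)$. Your version is, if anything, a bit more explicit than the paper's about the dominated-convergence majorant and about why the limit must be taken on the $t$-side rather than the spatial side when $s\ge 1$.
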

\begin{proof}
   Let $u\in C^{\infty}(M)$. Expressing the solution $P_t u$ to the heat equation in terms of the initial datum as 
\begin{equation*}
   P_t u(p)=\int_M u(q) H_M(p,q,t) \,dV_q \,,
\end{equation*}
and using that $\int_M H_M(p,q,t) \,dV_q=1$ gives, for every $\ep>0$, that
\begin{equation}\label{qhwerfgwerf}
    \frac{1}{\Gamma(-s/2)} \int_{0}^{\infty} (P_t u-u)\frac{e^{-\ep^2/4t} dt}{t^{1+s/2}} = \int_M (u-u(q))K_s^\ep(p,q) \, dV_q \,.
\end{equation}
Since $u$ is smooth, letting $\ep \to 0^+$ gives convergence of both integrals pointwise everywhere and 
\begin{equation*}
    \frac{1}{\Gamma(-s/2)} \int_{0}^{\infty} (P_t u-u)\frac{dt}{t^{1+s/2}} = p.v. \int_M (u-u(q))K_s(p,q) \, dV_q \,,
\end{equation*}
and this proves $(i)$. 

\vsp
Now to show $(ii)$ take $u\in L^2(M)$ and $\varphi \in C^\infty(M)$. Multiply \eqref{qhwerfgwerf} by $\varphi$ and integrate over $M$ to get
\begin{equation}\label{qhwerfgwerf2}
    \frac{1}{\Gamma(-s/2)} \int_M \int_{0}^{\infty} (P_t u-u) \varphi \frac{e^{-\ep^2/4t}}{t^{1+s/2}} \, dt dV = \iint_{M\times M} (u(p)-u(q))\varphi(p) K_s^\ep(p,q) \, dV_q dV_p \,.
\end{equation}
Note that since $\ep>0$ is fixed and positive, neither of the two integrals above is singular, and they are both absolutely convergent. Hence, we can exchange the other of integration in both integrals. For the left-hand-side using that $P_t$ is self adjoint in $L^2(M)$ we get 
\begin{align*}
    \frac{1}{\Gamma(-s/2)} \int_M \int_{0}^{\infty} (P_t u-u) \varphi \frac{e^{-\ep^2/4t}}{t^{1+s/2}} \, dt dV & = \frac{1}{\Gamma(-s/2)} \int_0^\infty \frac{e^{-\ep^2/4t}}{t^{1+s/2}} \lp  P_t u-u , \varphi \rp_{L^2} \, dt \\ &= \frac{1}{\Gamma(-s/2)} \int_0^\infty \frac{e^{-\ep^2/4t}}{t^{1+s/2}} \lp  P_t \varphi-\varphi , u \rp_{L^2} \, dt \\ &= \int_M \left( \frac{1}{\Gamma(-s/2)} \int_{0}^{\infty} (P_t  \varphi - \varphi ) \frac{e^{-\ep^2/4t}}{t^{1+s/2}} \, dt \right) u \, dV \,.
\end{align*}
Regarding the right-hand side, since $K_s^\ep(p,q)$ is symmetric 
\begin{equation*}
    \iint_{M\times M} (u(p)-u(q))\varphi(p) K_s^\ep(p,q) \, dV_q dV_p = \iint_{M\times M} (\varphi(p)-\varphi(q))u(p) K_s^\ep(p,q) \, dV_q dV_p \,.
\end{equation*}
Thus
\begin{equation*}
    \int_M \left( \frac{1}{\Gamma(-s/2)} \int_{0}^{\infty} (P_t  \varphi - \varphi ) \frac{e^{-\ep^2/4t}}{t^{1+s/2}} \, dt \right) u \, dV = \int_{M} \left( \int_M (\varphi(p)-\varphi(q))K_s^\ep(p,q) \, dV_q \right) u(p) dV_p \,,
\end{equation*}
and letting $\ep \to 0^+$ and using $(i)$ proves $(ii)$.
\end{proof}
\begin{remark}
    On a noncompact Riemannian manifold, the mass preservation property $\int_M H_M(p,q,t) \,dV_q=1$ could fail, leading to undesired phenomena such as the fractional Laplacian of a constant being different from zero. It is thus natural in the noncompact case to assume that $M$ is stochastically complete, i.e. that $\int_M H_M(p,q,t) \,dV_q=1$ for every $t>0$.
\end{remark}

\subsection{Properties of the kernel}\label{Ker prop section}

This section gives important estimates on the singular kernel $K_s(p,q)$. In order to precisely quantify the dependence of the constants in the estimates on the geometry of the ambient manifold, the notion of ``local flatness assumption'' will be very useful. Let us introduce it below.

\vsp 
Here, as in the rest of the paper, $\B_R(0)$ denotes the Euclidean ball of radius $R$ centered at $0$ of  $\R^n$, and $B_R(p) $ denotes the metric ball on $M$ of radius $R$ and center $p$. 

\begin{definition}[\textbf{Local flatness assumption}]\label{flatnessassup}
Let $(M^n,g)$ be an $n$-dimensional Riemannian manifold and $p\in M$. For $R>0$, we say that $ (M,g)$ satisfies the $\ell$-th order \textit{flatness assumption at scale $R$ around the point $p$, with parametrization $\varphi$}, abbreviated as ${\rm FA}_\ell(M,g, R,p,\varphi)$,  whenever there exists an open neighborhood $V$ of $p$ and a diffeomorphism
\begin{equation*}
\varphi:  \B_R(0) \to V, \quad \mbox{with }\varphi(0)=p \,,  
\end{equation*}
such that, letting $g_{ij}=g\left(\varphi_*\left(\frac{\partial}{\partial x^i}\right), \varphi_* \left(\frac{\partial}{\partial x^j} \right) \right)$  
be the representation of the metric $g$ in the coordinates $\varphi^{-1}$, we have
\begin{equation}\label{hsohoh1}
\big( 1-\tfrac{1}{100}\big) |v|^2 \le  g_{ij}(x)v^i v^j \le \big( 1+\tfrac{1}{100}\big)|v|^2 \s\s \forall \, v\in \R^n  \mbox{ and } \forall \, x\in \B_R(0) \,,
\end{equation}
and
\begin{equation}\label{hsohoh2}
R^{|\alpha|}\bigg|\frac{\partial^{|\alpha|} g_{ij} (x)}{\partial x^{\alpha}}\bigg|\le \tfrac{1}{100} \s\s\forall \alpha \mbox{ multi-index  with }1\le|\alpha|\le \ell, \mbox{ and }\forall   x\in \B_R(0).
\end{equation}
\end{definition}

\begin{remark}\label{fbsvdg}
Notice that for any smooth closed Riemannian manifold $(M,g)$, given $\ell \ge 0$, there exists $R_0>0$ for which ${\rm FA}_\ell(M,g,R_0,p,\varphi_p)$ is satisfied for all $p\in M$,  where $\varphi_p$ can be chosen to be the restriction of the exponential map\footnote{That is $\varphi_p = (\exp_p\circ \, i)|_{\B_{R_0}(0)}$ for any isometric identification of $i : \R^n \to TM_p$}(of $M$) at $p$ to the (normal) ball $\B_{R_0}(0)\subset T_p M \cong \R^n$.
\end{remark}

\begin{remark}
    The notion above of local flatness is used in our results to stress the fact that, once the local geometry of the manifold is controlled in the sense of Definition \ref{flatnessassup}, then our estimates are independent of
    $M$. Interestingly, this makes our estimates in the present article and in \cite{CFS} of local nature, even though the equation we deal with is nonlocal.
\end{remark}

\begin{remark}\label{flatscalingrmk}
The following useful scaling properties hold.
\begin{itemize}
\item[(a)] Given $M = (M,g)$ and $r>0$, we can consider the "rescaled manifold" $\widehat M = (M,  r^2g)$. When performing this rescaling, the new heat kernel $H_{\widehat {M}}$ satisfies 
\begin{equation*}
    H_{\widehat{M}}(p,q,t) = r^{-n} H_M(p,q,t/r^2)  \,.
\end{equation*}
As a consequence,  the "rescaled kernel" $\widehat  K_s$ defining the $s$-perimeter on $\widehat M$ satisfies 
\[
\widehat  K_s(p,q) = r^{-(n+s)} K_s(p,q).
\]

\item[(b)] Concerning the flatness assumption, it is easy to show that ${\rm FA}_\ell(M,g, R, p,\varphi) \Rightarrow {\rm FA}_\ell(M,g, R',p,\varphi)$ for all $R'<R$ and  ${\rm FA}_\ell(M,g, R, p,\varphi)\Leftrightarrow {\rm FA}_\ell(M, r^2g, R/r, p,\varphi(r\,\cdot\,))$.

\item[(c)] Similarly, if ${\rm FA}_\ell(M,g, R, p,\varphi)$ holds,  and $q\in \varphi(\B_R(0))$ is such that $\B_\varrho(\varphi^{-1}(q))\subset \B_R(0)$, then 
${\rm FA}_\ell(M,r^2g, \varrho/r, q,\varphi_{\varphi^{-1}(q),r})$ holds, where $\varphi_{x,\,\rho} := \varphi(x+\rho \, \cdot \,)$. 
\end{itemize}

\end{remark}

\vsp

In all the sections, we will use the (standard) multi-index notation for derivatives. A multi-index $\alpha=(\alpha_1,\alpha_2,\dots, \alpha_n)$ will be an $n$-tuple of nonnegative integers (in other words $\alpha \in \N^n$). We define 
\[
|\alpha| : = \alpha_1+ \alpha_2 +\cdots +\alpha_n.
\]
For a function  $f: \R^n \to \R$ is of class $C^\ell$ we shall use the notation
\[
\frac{\partial^{|\alpha|}}{\partial x^{\alpha}} f : = \frac{\partial^{\alpha_1 + \alpha_2 + \cdots \alpha_n}f}{(\partial x^1)^{\alpha_1} (\partial x^2)^{\alpha_2}\cdots  (\partial x^n)^{\alpha_n} }\, .
\]
For $\alpha=0$, we put $\frac{\partial^{|0|}}{\partial x^{0}} f :=f$.

\vsp
The next main theorem gives the precise behavior of the kernel around a point satisfying flatness assumptions, including an explicit approximation in coordinates.

\begin{theorem}\label{prop:kern1} 
Let $(M,g)$ be a Riemannian $n$-manifold, not necessarily closed, $s\in(0,2)$ and let $p\in M$. Assume ${\rm FA_\ell}(M,g,R,p, \varphi)$ holds and denote $K(x,y): = K_s(\varphi(x), \varphi(y))$. 

\vsp
Given $x\in \B_R(0)$, let
$A(x)$ denote the positive symmetric square root of the matrix $(g_{ij}(x))$ ---$g_{ij}$ being the metric in coordinates $\varphi^{-1}$--- and, for $x,z\in \B_{R/2}(0)$, define
\begin{equation*}
    k(x, z): = K(x,x+z) \quad \mbox{and}\quad  \widehat k(x,z) := k(x,z) -  \frac{\alpha_{n,s}}{|A(x)z|^{n+s}}.
\end{equation*}
Then
\begin{equation}\label{remaining0}
\big|\widehat k(x,z)\big| \le  R^{-1}\frac{C(n,s)}{|z|^{n+s -1}}\quad \mbox{for all } x,z \in \B_{R/4}(0) \,,
\end{equation}
and, for every multi-indices $\alpha,\beta$  with $|\alpha|+|\beta| \le \ell$,
we have
\begin{equation}\label{remaining}
  \bigg|\frac{\partial^{|\alpha|}}{\partial x^{\alpha}} \frac{\partial^{|\beta|}}{\partial z^{\beta}}  k(x,z)\bigg| \le \frac{C(n,s, \ell)}{|z|^{n+s+ |\beta|}}\quad \mbox{for all } x,z \in \B_{R/4}(0) .  
\end{equation}
The constants $C(n,s)$ and $C(n,s,l)$ stay bounded for $s$ away from $0$ and $2$.\\

Moreover, for all $x\in \B_{R/4}(0)$ and for all $q\in M\setminus \varphi(\B_R(0))$ we have
\begin{equation}\label{remaining2}
\bigg|\frac{\partial^{|\alpha|}}{\partial x^{\alpha}}  K_s(\varphi(x),q)\bigg| \le \frac{C(n,\ell)}{R^{n+s}} \,,
\end{equation}
and 
\begin{equation}\label{remaining3}
\int_{M\setminus \varphi(\B_R(0))}\bigg|\frac{\partial^{|\alpha|}}{\partial x^{\alpha}}  K_s(\varphi(x),q)\bigg| dV_q \le \frac{C(n,\ell)}{R^{s}}, 
\end{equation}
for every multi-index  $\alpha$ with $|\alpha|\le \ell$.

\end{theorem}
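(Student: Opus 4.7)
The whole theorem reduces to manipulating the integral representation \eqref{kerneleq}; all four estimates amount to controlling the time integral $\int_0^\infty(\cdot)\frac{dt}{t^{1+s/2}}$ of $H_M$ (or its derivatives) against the precise heat kernel bounds established in Lemmas \ref{lemaux0}--\ref{lemaux3}. The basic tactic is to split the $t$-integration at $t=cR^2$ (or $t=c|z|^2$, depending on the regime), use the sharp short-time asymptotics of Lemma \ref{lemaux3} in the small-$t$ part, and rely on the long-time / long-distance estimates of Lemmas \ref{lemaux1}, \ref{lemaux12} on the complementary regime.

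\textbf{Estimate \eqref{remaining0}.} First I would record the exact Euclidean identity
\begin{equation*}
\frac{s/2}{\Gamma(1-s/2)}\int_0^\infty \frac{1}{(4\pi t)^{n/2}}e^{-|A(x)z|^2/(4t)}\,\frac{dt}{t^{1+s/2}}=\frac{\alpha_{n,s}}{|A(x)z|^{n+s}},
\end{equation*}
which pins down the role of the ``frozen metric'' Gaussian $G_x(z,t):=(4\pi t)^{-n/2}e^{-|A(x)z|^2/(4t)}$. The precise short-time asymptotics from Lemma \ref{lemaux3} in the coordinates $\varphi$ then produce a decomposition $H_M(\varphi(x),\varphi(x+z),t)=G_x(z,t)+E(x,z,t)$ valid for $t\le cR^2$, where the flatness bounds \eqref{hsohoh1}--\eqref{hsohoh2} (so $|g(x+z)-g(x)|\lesssim |z|/R$ and $d(\varphi(x),\varphi(x+z))^2=|A(x)z|^2+O(|z|^3/R)$) force
\begin{equation*}
|E(x,z,t)|\le C\frac{|z|}{R}\,t^{-n/2}\,e^{-c|z|^2/t}.
\end{equation*}
Integrating $E$ against $dt/t^{1+s/2}$ by the substitution $\tau=|z|^2/t$ yields exactly $CR^{-1}|z|^{1-n-s}$. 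The tail $t\ge cR^2$ contributes at most $CR^{-(n+s)}$ by Lemmas \ref{lemaux1}, \ref{lemaux12}, and since one automatically has $|z|\le R/2$ in the statement, this is absorbed into the same bound.

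\textbf{Estimate \eqref{remaining}.} I would differentiate under the integral, justified by the short-time derivative bounds of Lemmas \ref{lemaux0}, \ref{lemaux2}: for $t\le cR^2$,
\begin{equation*}
\bigg|\frac{\partial^{|\alpha|}}{\partial x^{\alpha}} \frac{\partial^{|\beta|}}{\partial z^{\beta}}H_M(\varphi(x),\varphi(x+z),t)\bigg|\le C(n,\ell)\,t^{-(n+|\beta|)/2}\,e^{-c|z|^2/t}.
\end{equation*}
The key observation is that $\partial_x$ applied to $k(x,z)=K(x,x+z)$ is the ``tandem'' derivative $\partial_p+\partial_q$ acting on $K(p,q)$, which does not worsen the diagonal singularity — it only sees the curvature/metric dependence, which is controlled by the flatness assumption and hence by constants depending on $\ell$ (not on $R$ once combined with the scale $|z|\le R$). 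Thus the $|\alpha|$ derivatives produce bounded factors while the $|\beta|$ derivatives produce the expected $|z|^{-|\beta|}$ loss. Integrating against $dt/t^{1+s/2}$ on $[0,cR^2]$ gives $C/|z|^{n+s+|\beta|}$; the contribution of $t\ge cR^2$ is again harmless by Lemmas \ref{lemaux1}, \ref{lemaux12}.

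\textbf{Estimates \eqref{remaining2}--\eqref{remaining3}.} When $q\notin\varphi(\B_R(0))$ and $x\in\B_{R/4}(0)$, the flatness assumption (together with Remark \ref{flatscalingrmk}) forces $d(\varphi(x),q)\ge cR$. For the pointwise estimate I would combine the short-time Gaussian bound $|\partial_x^\alpha H_M(\varphi(x),q,t)|\le C t^{-(n+|\alpha|)/2}e^{-cR^2/t}$ (for $t\le cR^2$) with the long-time bound (for $t\ge cR^2$), split the $t$-integral at $T=R^2$ and evaluate each piece by elementary scaling to get $C(n,\ell)R^{-(n+s)}$. For \eqref{remaining3} I would swap integrals and use that $\int_{M\setminus\varphi(\B_R)}|\partial_x^\alpha H_M(\varphi(x),q,t)|\,dV_q$ is bounded by $Ce^{-cR^2/t}$ for short times (Gaussian decay away from the diagonal) and by a $t$-independent constant (resp.\ mass preservation when $|\alpha|=0$, or a spectral/eigenvalue argument otherwise) for large $t$; integration against $dt/t^{1+s/2}$ then yields $C(n,\ell)R^{-s}$.

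\textbf{Main obstacle.} The delicate point is Step 1: extracting the first-order $|z|/R$ improvement in the parametrix remainder $E(x,z,t)$ with the correct Gaussian decay intact, so that the time integral gives exactly the claimed $R^{-1}|z|^{1-n-s}$ rather than some weaker bound. This demands sharp accounting in Lemma \ref{lemaux3} (namely that the parametrix coefficient $u_0(p,q)^{1/2}$ and the discrepancy between $d(\varphi(x),\varphi(x+z))^2$ and $|A(x)z|^2$ both enter at first order with the factor $|z|/R$ dictated by \eqref{hsohoh2}); everything else is a reasonably routine bookkeeping of Gaussian integrals.
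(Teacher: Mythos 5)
Your outline follows essentially the same route as the paper: compare the heat kernel to the frozen-metric Gaussian $G_x$, control the remainder via the precise short-time asymptotics of Proposition~\ref{lemaux3}, integrate against $dt/t^{1+s/2}$, and handle far-field terms via Lemmas~\ref{lemaux1} and~\ref{lemaux12}. That said, there are two technical inaccuracies worth flagging.

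First, the intermediate bound you claim, $|E(x,z,t)|\le C\tfrac{|z|}{R}t^{-n/2}e^{-c|z|^2/t}$, is not literally correct: it vanishes at $z=0$, whereas the parametrix remainder $\widehat h(0,x,t)=h(0,x,t)-h_\circ(0)$ is $O(t)\neq 0$ (because $H(x,x,t)=(4\pi t)^{-n/2}(1+O(t))$). The actual estimate furnished by Proposition~\ref{lemaux3} is $|\widehat h(z',x,t)|\le C\min(1,\sqrt t)\,e^{-c|z'|^2}$, i.e.\ $|E|\le C\min(1,\sqrt t/R)\,t^{-n/2}e^{-c|z|^2/t}$ after rescaling. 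Plugging either bound into the $t$-integral gives the same $R^{-1}|z|^{1-n-s}$ (the integral concentrates where $t\sim|z|^2$, where $\sqrt t\sim|z|$), so your conclusion is right, but the derivation should use the $\sqrt t$ form rather than attributing a $|z|/R$-smallness to the error that it does not possess pointwise. Second, Proposition~\ref{lemaux3} requires a \emph{globally} defined metric on $\R^n$ satisfying \eqref{asumpg}; it does not apply verbatim in a local chart on a general $M$. The paper's proof first invokes the localization principle (Lemma~\ref{lemaux2}) — extend $g$ to $\R^n$ by a cutoff, show $K-K'$ is $C^\ell$-bounded, and then work on $(\R^n,g')$ — and your sketch should include this reduction explicitly before appealing to Proposition~\ref{lemaux3}. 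A minor point: for the derivative estimates \eqref{remaining}, the bound $|\partial_x^\alpha\partial_z^\beta H|\lesssim t^{-(n+|\beta|)/2}e^{-c|z|^2/t}$ comes from the derivative part of Proposition~\ref{lemaux3} (not from Lemmas~\ref{lemaux0},\ref{lemaux2} as you cite); your ``tandem derivative'' heuristic for why $x$-derivatives do not worsen the singularity is correct in spirit and is exactly what the change of variables $z'=A(x)z/\sqrt t$ encodes.
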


\subsubsection{Heat kernel estimates}
To prove Theorem \ref{prop:kern1} we will need several preliminary lemmas studying the properties of the heat kernel of $M$.

The first result compares locally the heat kernel $H_M(p,q,t)$ or the singular kernel $K_s(p,q)$ on $\R^n$ endowed with a metric $g$ with the standard ones on $\R^n$.
\begin{lemma}\label{globcomparability}
    Let $g$ be a smooth metric on $\R^n$ such that $\frac{|v|^2}{4}\le g_{ij}(x)v^iv^j\le 4|v|^2$ and $|Dg_{ij}(x)|\le 1$ for all $x , v \in \R^n$. Denote $M:= (\R^n,g)$ and let $K_s$ be defined by \eqref{kerneleq}. Then, there exist positive constants $c_i=c_i(n)$ for $1\le i \le 6$ such that 
    \begin{equation*}
    \frac{c_1}{t^{n/2}} e^{- \frac{|x-y|^2}{c_2 t} } \le H_M(x,y,t) \le\frac{c_3}{t^{n/2}}  e^{- \frac{|x-y|^2}{c_4 t} } ,
\end{equation*}
and 
\begin{equation*}
   c_5 \frac{\alpha_{n,s}}{|x-y|^{n+s}} \le  K_s(x,y) \le c_6 \frac{\alpha_{n,s}}{|x-y|^{n+s}}  \,, 
\end{equation*}
for all $(x,y,t ) \in \R^n \times \R^n \times [0, \infty)$.
\end{lemma}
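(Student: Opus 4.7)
The plan is to reduce the heat-kernel estimate to the classical Aronson Gaussian bounds for uniformly elliptic divergence-form operators on $\mathbb{R}^n$, and then to obtain the bound on $K_s$ by computing the subordination integral \eqref{kerneleq} explicitly. First, I would rewrite the Laplace-Beltrami operator in Euclidean coordinates as
\begin{equation*}
\Delta_g u = \frac{1}{\sqrt{|g|}}\partial_i\!\left(\sqrt{|g|}\,g^{ij}\partial_j u\right).
\end{equation*}
The assumption $\tfrac{1}{4}|v|^2\le g_{ij}v^iv^j\le 4|v|^2$ yields dimensional two-sided bounds on $g^{ij}$ and on $\sqrt{|g|}$, hence uniform ellipticity of $\Delta_g$ with constants depending only on $n$. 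In particular the Riemannian distance $d_g$ is bi-Lipschitz equivalent to the Euclidean distance, so $(\R^n,g)$ is a complete, volume doubling metric-measure space satisfying a scale-invariant Poincar\'e inequality with dimensional constants.

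Under these hypotheses, Aronson's theorem (or equivalently the Nash-Moser-Saloff-Coste machinery) provides dimensional constants $c_1,\ldots,c_4$ such that the fundamental solution $\widetilde H(x,y,t)$ of $\partial_t - \Delta_g$ with respect to Lebesgue measure satisfies
\begin{equation*}
\frac{c_1}{t^{n/2}}e^{-|x-y|^2/(c_2 t)}\le \widetilde H(x,y,t)\le \frac{c_3}{t^{n/2}}e^{-|x-y|^2/(c_4 t)}.
\end{equation*}
Since $H_M$ is the density of the heat semigroup with respect to $\sqrt{|g|}\,dx$, and $\sqrt{|g|}$ is pinched between dimensional constants, $H_M$ satisfies the same bound after readjusting $c_1,c_3$. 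The Gaussian factor can be written in terms of the Euclidean distance rather than $d_g$ by the bi-Lipschitz equivalence, absorbing the change into $c_2,c_4$.

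For $K_s$, I would substitute these two-sided bounds for $H_M$ into the subordination formula \eqref{kerneleq}. The change of variables $\tau = |x-y|^2/(c t)$ reduces the resulting integrals to Gamma integrals of the form
\begin{equation*}
\int_0^\infty e^{-\tau}\tau^{(n+s)/2-1}\,d\tau=\Gamma\!\left(\tfrac{n+s}{2}\right),
\end{equation*}
yielding $c_5 \alpha_{n,s}|x-y|^{-(n+s)}\le K_s(x,y)\le c_6 \alpha_{n,s}|x-y|^{-(n+s)}$, with constants depending only on $n$ (the $s$-dependence being absorbed into $\alpha_{n,s}$ through \eqref{alphadef}). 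The delicate point is to extract dimensional Gaussian bounds while relying only on the uniform ellipticity and not on the $C^1$ bound $|Dg_{ij}|\le 1$; this can be done either by directly invoking Saloff-Coste's two-sided estimates on doubling spaces with a Poincar\'e inequality, or by splitting the argument into Davies' method for the upper bound and Moser's parabolic Harnack inequality for the lower bound, applied to the symmetric form of $\Delta_g$ on $L^2(\R^n,\sqrt{|g|}\,dx)$.
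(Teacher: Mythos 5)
Your proposal is correct and follows the same route as the paper, whose proof simply invokes Aronson's classical Gaussian bounds for the heat kernel and then integrates them against the subordination formula \eqref{kerneleq}. The subtlety you flag about avoiding the $|Dg_{ij}|\le 1$ hypothesis is real but harmless: the cleanest route, as you suggest, is to apply the two-sided bounds to the Dirichlet form of $\Delta_g$ on $L^2(\sqrt{|g|}\,dx)$ (or equivalently to note that the metric-measure space is doubling with a Poincar\'e inequality), where the weight $\sqrt{|g|}$ enters only through its $C^0$ pinching and never through its derivatives.
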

\begin{proof}
    The two-sided estimates for the heat kernel $H_M$ follow directly from the classical parabolic estimates of Aronson \cite{Aronson}. The second inequality follows by integrating the first one, from the definition \eqref{kerneleq} of $K_s(x,y)$.
\end{proof}

The next lemma concerns the concentration of mass of the heat kernel.
\begin{lemma}\label{lemaux0}
Let $(M,g)$ be a Riemannian $n$-manifold,  $p\in M$, and assume ${\rm FA_0}(M,g,1,p, \varphi)$ holds. Then
\[
1-C\exp(-c/t)\le \int_{\varphi(\B_{1/2}(0))} H_M(p,q,t)dV_q  \le 1 , \s\mbox{for all }t>0,
\]
with $C,c>0$ depending only on $n$.
\end{lemma}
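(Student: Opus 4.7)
\emph{Plan.} The upper bound is immediate from the parabolic maximum principle: $u(q,t) := \int_M H_M(q,q',t)\,dV_{q'}$ solves the heat equation with initial datum $\equiv 1$, hence $u \le 1$. For the lower bound one may assume $t \le 1$, since for $t \ge 1$ the right-hand side is nonpositive for $C$ large enough and the inequality is trivial.

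The plan is to replace $H_M$, inside the local chart, by a heat kernel on $(\R^n,\widetilde g)$ for a suitable extension $\widetilde g$ of $g$, and then invoke Aronson's classical Gaussian estimates. Concretely, set $U := \varphi(\B_{3/4}(0))$ and let $H^D_U$ denote the Dirichlet heat kernel on $U$. By the parabolic maximum principle applied to $H_M - H^D_U$ (which is nonnegative on the parabolic boundary of $U$), one has $H_M \ge H^D_U$ on $U \times U$. Pulled back via $\varphi$, $H^D_U$ becomes the Dirichlet heat kernel $\widetilde H^D$ of $\Delta_g$ on $\B_{3/4}(0) \subset \R^n$, which by ${\rm FA}_0$ is uniformly elliptic with bounded measurable coefficients. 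Extend $g|_{\B_{3/4}(0)}$ to a metric $\widetilde g$ on all of $\R^n$ with the same ellipticity bounds---for instance $\widetilde g := \eta g + (1-\eta)\delta$ for a smooth cutoff $\eta$ with $\eta \equiv 1$ on $\B_{3/4}(0)$ and $\eta \equiv 0$ outside $\B_{4/5}(0)$---and let $\widetilde H$ denote the heat kernel of the stochastically complete manifold $(\R^n,\widetilde g)$.

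Aronson's Gaussian upper bound $\widetilde H(x,y,t) \le (C/t^{n/2}) \exp(-|x-y|^2/(Ct))$, combined with the stochastic completeness of $(\R^n,\widetilde g)$, yields
\[
\int_{\B_{1/2}(0)} \widetilde H(0,y,t)\,dV_{\widetilde g}(y) \ge 1 - Ce^{-c/t} \quad \text{and} \quad P_0^{\widetilde g}\bigl(\tau_{\B_{3/4}(0)} \le t\bigr) \le Ce^{-c/t},
\]
the first by Gaussian tail integration, the second by Doob's maximal inequality together with the Gaussian bound. On the other hand, the strong Markov property at the exit time $\tau := \tau_{\B_{3/4}(0)}$ gives the identity
\[
\int_{\B_{1/2}(0)} \bigl(\widetilde H - \widetilde H^D\bigr)(0,y,t)\,dV_g(y) \;=\; E_0^{\widetilde g}\!\left[\int_{\B_{1/2}(0)} \widetilde H(B_\tau, y, t-\tau)\,dV_{\widetilde g}(y);\;\tau \le t\right] \;\le\; P_0^{\widetilde g}(\tau \le t),
\]
where we used $dV_{\widetilde g} = dV_g$ on $\B_{3/4}(0)$ (since $\widetilde g = g$ there) and $\int_{\B_{1/2}(0)} \widetilde H(z,y,s)\,dV_{\widetilde g}(y) \le 1$. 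Combining these three facts yields $\int_{\B_{1/2}(0)} \widetilde H^D(0,y,t)\,dV_g(y) \ge 1 - C'e^{-c'/t}$, and pulling back through $\varphi$ proves the lower bound.

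The only non-elementary input is Aronson's Gaussian estimate, which requires only uniform ellipticity with bounded measurable coefficients---precisely what the $C^0$ flatness hypothesis ${\rm FA}_0$ provides---so this is the step where the hypothesis of the lemma is really used. The remaining ingredients (the maximum principle comparison $H_M \ge H^D_U$, the metric extension, and the strong Markov identity) are routine, so no serious obstacle is anticipated.
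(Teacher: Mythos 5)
Your argument is correct and rests on the same two pillars as the paper's (extend the metric to $\R^n$ via a cutoff, invoke Aronson's two-sided Gaussian bounds plus stochastic completeness), but the comparison step is organized differently. The paper avoids Dirichlet heat kernels altogether: it takes the explicit truncated function $u(x,t) = (H'(0,x,t) - h(\tau))^+$ with $h(\tau) := c_3\tau^{-n/2}e^{-c_4/(16\tau)}$, notes that by the Aronson upper bound this is a subsolution in $\B_{1/4}(0)\times(0,\tau)$ with zero lateral boundary data, and compares it directly to $H_M(p,\cdot,t)$ via the maximum principle (using $g' \equiv g$ in $\B_{1/4}$ and the matching Dirac initial data). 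Integrating the resulting pointwise inequality and using stochastic completeness of $(\R^n,g')$ then gives the conclusion in a few lines. Your route replaces this barrier by the Dirichlet heat kernel $H^D_U$, uses $H_M \ge H^D_U$, and then controls $\int_{\B_{1/2}}(\widetilde H - \widetilde H^D)$ by the exit-time probability via the strong Markov identity; this is a perfectly valid and arguably more conceptual probabilistic version of the same comparison. One imprecision worth flagging: the bound $P_0^{\widetilde g}(\tau_{\B_{3/4}} \le t) \le Ce^{-c/t}$ is not a consequence of Doob's maximal inequality (which concerns martingales, not exit times of diffusions). It does follow from Aronson's upper bound, e.g.\ by the standard strong Markov argument: split on whether $|B_t| \le 1/4$ or not, estimate $P_0(\tau \le t,\, |B_t| \le 1/4)$ by conditioning at $\tau$ and using the Gaussian upper bound for the transition from $\partial\B_{3/4}$ into $\B_{1/4}$ in time $\le t$, and estimate $P_0(|B_t| > 1/4)$ by stochastic completeness plus the Gaussian tail. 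So the claim is true, but the justification you cite should be replaced; as written it is the one step that would not withstand scrutiny.
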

\begin{proof}
Put $H(x,y,t): = H_M(\varphi(x), \varphi(y),t)$. 
Let $g_{ij}\in C^0(\B_1(0))$ be the metric coefficients in the chart $\varphi^{-1}$, choose $\xi\in C^0_c(\B_1(0))$ such that $\chi_{\B_{3/4}(0)}\le \xi \le \chi_{\B_{1}(0)}$  and put $g'_{ij} : = g_{ij}\xi + \delta_{ij}(1-\xi)$. By assumption, we have 
$\big| \overline g'_{ij}(x) v^iv^j -|v|^2 \big|\le \tfrac {1}{100} |v|^2$ for all $x,v\in \R^n$. Morevoer, 
$g'_{ij}\equiv g_{ij}$ inside $\B_{3/4}(0)$. 
Consider the complete Riemannian manifold $M':=(\R^n, g')$ and let $H'(x,y,t)$ denote its associated heat kernel.
Then, by Lemma \ref{globcomparability} we have
\begin{equation}\label{whitohw736}
\frac{c_1}{t^{n/2}}  e^{-c_2|x-y|^2/t}\le 
 H'(x,y,t)   \le \frac{c_3}{t^{n/2}} e^{-c_4|x-y|^2/t}.
\end{equation}
Now, since $H'(0,x,t)$ is the heat kernel of  the stochastically complete manifold $M'$  we have 
\begin{equation}\label{intheatdensity1}
\int_M H'(0,x,t) \sqrt {|g'|(x)} dx =1\quad \mbox{for all } t>0.
\end{equation}
On the other hand, for every fixed $\tau>0$ set $h(\tau) :=  \frac{c_3}{\tau^{n/2}} e^{-c_4(1/4)^2/\tau}$. Using \eqref{whitohw736} we have 
 that $u(x,t) = \big(H'(0,x,t)- h(\tau)\big)^+$, where $(\,\cdot\,)^+$ denotes the positive part, is a subsolution of  
 \[
 \begin{cases}
u_t \le \frac{1}{\sqrt{|g'|}} \frac{\partial}{\partial x^i} \bigg( \sqrt{|g'|} (g')^{ij} \frac{\partial}{\partial x^j} u \bigg) 
\quad &\mbox{in } \B_{1/4}(0)\times(0,\tau) \,,\\
u = 0  &\mbox{in } \partial\B_{1/4}(0)\times(0,\tau) \,.
\end{cases}
 \]
 Since $g'\equiv g$ in $\B_{1/4}(0)$, it easily follows (using that both $H(0,x,t)$ and $H'(0,x,t)$ have as initial condition a Dirac delta with respect to the same volume form $\sqrt{|g|} dx$) that $u\le H(0,x,t)$ for all $t\in (0,\tau)$.
This gives, for all $t\in (0, \tau)$
\[
\int_{\B_{1/4}} \big(H'(0,x,t)- h(\tau)\big)^+ \sqrt{|g|} dx =\int_{\B_{1/4}} u(x,t) \sqrt{|g|} dx\le \int_{\B_{1/4}} H(0,x,t)\sqrt{|g|} dx
\]
On the other hand, using \eqref{intheatdensity1} and \eqref{whitohw736} we obtain that for all $t\in (0, \tau)$
\[
\begin{split}
1 - C\exp(-c/\tau) &\le 1 - \int_{\R^n \setminus \B_{3/4}(0)} \frac{c_3}{t^{n/2}} e^{-c_4|x|^2/t} \big(1+\tfrac{1}{100}\big)^{n/2} dx 
\le 1 - \int_{\R^n \setminus \B_{3/4}(0)} H'(0,x,t) \sqrt {|g'|} dx\\
&= \int_{\B_{3/4}(0)}  H'(0,x,t)\sqrt{|g|} dx.
\end{split}
\]
Finally, since also $h(\tau)\le  C\exp(-c/\tau)$ (notice that we can ``absorbe'' $\tau^{-n/2}$ in $Ce^{-c/\tau}$ chosing $c>0$ slightly smaller and a larger $C$), we obtain the desired estimate 
\[
1-C\exp(-c/\tau) \le \int_{\B_{1/4}(0)} H(0,x,t)\sqrt{|g|} dx\le \int_{\varphi(\B_{1/2}(0))} H(p,q,t)dV_q,\quad \forall t\in (0,\tau) \,, \]
and for all $\tau>0$. The bound by above by $1$ of the same quantity follows immediately using that $H$ is a heat kernel, i.e. nonnegative and with total mass bounded by $1$.
\end{proof}

\begin{lemma}\label{lemaux1}
Under the same assumptions as in Theorem \ref{prop:kern1}, for all $q\in M\setminus \varphi(\B_1(0))$ we have
\begin{equation}\label{estder}
    \bigg|\frac{\partial^{|\alpha|}}{\partial x^{\alpha}}  H_M(\varphi(x),q,t)\bigg| \le C \exp(-c/t), \s\mbox{for } (x,t)\in \B_{1/2}(0) \times [0,\infty)
\end{equation}
and for every multi-index $\alpha$ with  $|\alpha|\le \ell$, 
with $C,c>0$ depending only on $n$ and $\ell$.
\end{lemma}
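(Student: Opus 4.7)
The proof splits into two stages: first the $|\alpha|=0$ pointwise estimate $H_M(\varphi(x),q,t)\le C\exp(-c/t)$, and then the extension to derivatives via interior parabolic regularity in the chart. Most of the work is in Stage~1.

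\textbf{Stage~1 (pointwise bound).} I would define
\[
\mathcal{M}(t):= \sup_{x\in\B_{1/2}(0),\,q\in M\setminus\varphi(\B_1(0))} H_M(\varphi(x),q,t)
\]
and establish $\mathcal{M}(t)\le C\exp(-c/t)$ by an iterated semigroup argument. Writing $H_M(\varphi(x),q,2t)=\int_M H_M(\varphi(x),z,t)H_M(z,q,t)\,dV_z$ and splitting the $z$-integral at $E=\varphi(\B_{1/2}(0))$ and $F=M\setminus E$, one bounds: on $E$, $H_M(z,q,t)\le \mathcal{M}(t)$ (by definition, since $z=\varphi(x')$ for some $|x'|\le 1/2$) and $\int_E H_M(\varphi(x),z,t)\,dV_z\le 1$ (mass conservation); on $F$, $H_M(z,q,t)\le Ct^{-n/2}$ by the diagonal heat kernel bound (which comes from extending $g$ outside $\varphi(\B_1(0))$ to a bounded-geometry metric on $\R^n$ and applying Lemma~\ref{globcomparability}) together with $\int_F H_M(\varphi(x),z,t)\,dV_z\le C\exp(-c/t)$ by Lemma~\ref{lemaux0} applied at $\varphi(x)$ (legitimate via the rescaling in Remark~\ref{flatscalingrmk}(c), since ${\rm FA}_\ell$ still holds around $\varphi(x)$). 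Combining and taking the sup over $x,q$ yields the recursion
\[
\mathcal{M}(2t) \le \mathcal{M}(t) + C\, t^{-n/2}\exp(-c/t).
\]
Iterating downward, and using that $\mathcal{M}(t)\to 0$ as $t\to 0^+$ (because $H_M(p,q,t)\to 0$ pointwise for $p\ne q$), one gets $\mathcal{M}(t)\le C\exp(-c'/t)$ for $t\le 1$, the $t^{-n/2}$ factor being absorbed into a slightly smaller exponent. For $t\ge 1$ the bound is trivial since $H_M$ is globally bounded.

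\textbf{Stage~2 (derivatives).} Fix $q\in M\setminus\varphi(\B_1(0))$, and let $u(y,\tau):=H_M(\varphi(y),q,\tau)$, which is a smooth solution on $\B_{3/4}(0)\times(0,\infty)$ of the uniformly parabolic equation $\partial_\tau u=\Delta_g u$, with coefficients in coordinates that are $C^{\ell-1}$ with norms controlled by ${\rm FA}_\ell$, and with no source term inside the chart. To estimate $\partial_y^\alpha u(x,t)$ at $|x|\le 1/2$ and $t>0$, I would introduce the parabolic rescaling $v(\eta,\sigma):=u(x+\sqrt{t}\,\eta,\,t\sigma)$ for $\eta\in\B_1(0)$, $\sigma\in[1/2,1]$ (valid for $\sqrt{t}$ small; for $t$ bounded below, no rescaling is needed and Schauder is applied directly). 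The rescaled coefficients are uniformly controlled, and Stage~1 provides $\|v\|_{L^\infty}\le C\exp(-c/t)$. Interior parabolic Schauder estimates then give
\[
|\partial_\eta^\alpha v(0,1)|\le C_\ell\|v\|_{L^\infty}\le C\exp(-c/t).
\]
Undoing the rescaling produces a factor $t^{-|\alpha|/2}$, which is absorbed into $\exp(-c/t)$ at the price of a slightly smaller $c$, giving \eqref{estder}.

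\textbf{Main obstacle.} The technical heart of the proof is Stage~1: converting the $L^1$-mass decay from Lemma~\ref{lemaux0} into a genuine pointwise bound on $H_M$. The iterated semigroup approach gives a clean recursion, but relies on the uniform diagonal bound for $H_M$ outside the chart, which is supplied by extending the metric to a bounded-geometry metric on $\R^n$ and invoking Lemma~\ref{globcomparability}. Stage~2 is then a routine application of parabolic Schauder estimates in the local chart, using the coefficient control from ${\rm FA}_\ell$.
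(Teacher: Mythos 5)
Your Stage~2 (parabolic Schauder in the chart, with rescaling and absorption of $t^{-|\alpha|/2}$ into the exponential) matches the paper's argument and is fine. The problem is Stage~1.

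The gap is that the iterated semigroup argument is not local, whereas the lemma must be. After splitting $H_M(\varphi(x),q,2t)=\int_M H_M(\varphi(x),z,t)H_M(z,q,t)\,dV_z$, the contribution from $F=M\setminus\varphi(\B_{1/2}(0))$ is estimated using a uniform diagonal bound $H_M(z,q,t)\le Ct^{-n/2}$ for $z,q$ \emph{both outside the chart}, with $C$ depending only on $n$. This bound is simply not available under ${\rm FA}_\ell(M,g,1,p,\varphi)$, which controls the metric only inside $\varphi(\B_1(0))$. Your justification --- ``extend $g$ outside $\varphi(\B_1(0))$ to a bounded-geometry metric on $\R^n$'' --- does not repair this: changing the metric outside the chart changes $H_M(z,q,\cdot)$ for $z,q\in F$ by an amount that is not controlled (the localization principle of Lemma~\ref{lemaux2} compares heat kernels only when \emph{both} arguments lie deep inside the region where the metrics agree). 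The same objection applies to the closing step ``$\mathcal{M}(t)\to 0$ as $t\to 0^+$'', which you'd need uniformly over the non-compact sup, again requiring non-local heat kernel control. A secondary, fixable nuisance: applying Lemma~\ref{lemaux0} at $\varphi(x)$ with $x$ near $\partial\B_{1/2}(0)$ forces the rescaled radius to zero, so your choice of $E$ would have to be enlarged, but this is cosmetic compared to the main problem.

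The paper circumvents exactly this obstacle with a \emph{starting-point reduction}: by continuity of sample paths of Brownian motion (equivalently, the parabolic maximum principle for $u(y,t)=H_M(\varphi(y),q,t)$ in $\B_{8/9}(0)$), to reach $\varphi(\B_\delta(x_\circ))$ the path from $q$ must first cross $\varphi(\partial\B_{8/9})$, giving $u(x_\circ,t)\le \sup_{q'\in\varphi(\partial\B_{8/9})}H_M(\varphi(x_\circ),q',t)$. This replaces the arbitrary exterior point $q$ by a point $q'$ \emph{inside the chart}. From there a local parabolic Harnack inequality (applied at scale $r$, then optimizing $r^2\sim t$) converts the $L^1$-mass decay of Lemma~\ref{lemaux0} into the pointwise $\exp(-c/t)$ bound. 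Both steps use only data in $\varphi(\B_1(0))$, so the constants depend only on $n$. You would need to insert some version of this reduction step to make the approach work; the semigroup iteration as written cannot.
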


\begin{proof}
Notice that $u(x,t) : = H_M(\varphi(x),q,t)$ satisfies $u_t =Lu$, in $\B_1(0)\times [0,\infty)$ and $u\equiv 0$ at $t=0$, where 
\begin{equation}\label{Lu17486}
     Lu =  \frac{1}{\sqrt{|g|}} \frac{\partial}{\partial x^i} \bigg( \sqrt{|g|} g^{ij} \frac{\partial u }{\partial x^j} \bigg)
\end{equation}
is the Laplace-Beltrami operator with metric $g$.

\vsp
Let us show that 
\begin{equation}\label{wnthiowhoihw2}
    |u|\le  C \exp(-c/t)\quad \mbox{for } (x,t)\in \B_{3/4}(0) \times [0,\infty),
\end{equation}
with $C,c>0$ dimensional constants. 
This follows from the following standard probabilistic consideration. Fix $x_\circ \in \varphi(\B_{3/4}(0))$. By continuity of sample paths, the probability that a Brownian motion started at $q \in M\setminus \varphi(\B_1(0))$ hits $\varphi(\B_{\delta}(x_\circ))$ ($0<\delta\ll1$) within time $\le t$ is less that the supremum among $q' \in \varphi(\partial \B_{8/9}) $ of the probability that a Brownian motion started at a point $q'$ hits $\varphi(\B_{\delta}(x_\circ))$ within time $\le t$. 
This gives
\begin{equation}\label{wnthiowhoihw22}
    u(x_\circ,t) \le \sup_{q' \in \varphi(\partial \B_{8/9})} H_M(\varphi(x_\circ), q',t).
\end{equation}
Let us now use \eqref{wnthiowhoihw22}, Lemma \ref{lemaux0}, and the parabolic Harnack inequality as follows to show \eqref{wnthiowhoihw2}. 

\vsp
For fixed $q' \in \varphi(\partial \B_{8/9}) $ set $v(x,t) := H_M(\varphi(x), q',t) $ and consider the rescaled $\widetilde v(x,t):= v(x_\circ+rx, t_\circ+ r^2t)$ for $r \in (0, 1/10 )$. Then $ \widetilde v\ge0$ satisfies a (uniformly) parabolic equation in $\B_1(0)\times(0,1)$ with smooth coefficients (that only improve as $r$ gets smaller). Thus, by the Harnack inequality for every $x \in \B_{1/2}(0)$ and $t \in (1/4, 1/2)$ we have $ \widetilde v(x,t) \le C \inf_{\B_{1/2}(0)} \widetilde v(\, \cdot \,, 1) \le C \widetilde v(y,1)$ for all $y \in \B_{1/2}(0)$. Integrating
    \begin{align*}
        \widetilde v(0,t)  \le C \int_{\B_{1/2}(0)} \widetilde v(y,1) dy = C \int_{\B_{1/2}(0)} v(x_\circ+ry,t_\circ+r^2) dy = C r^{-n} \int_{\B_{r/2}(x_\circ)} v(z,t_\circ+r^2) dz \,,
    \end{align*}
    for some $C=C(n)>0$. Thus, for all $t \in (t_\circ+r^2/4, t_\circ+r^2/2)$
    \begin{equation*}
        v(x_\circ,t) \le C r^{-n} \int_{\B_{r/2}(x_\circ)} v(z,t_\circ + r^2) dz \,.
    \end{equation*}
    But $\varphi(\B_{r/2}(x_\circ)) \subset M\setminus B_{1/10}(q') $ for every $q' \in \varphi(\partial \B_{8/9}(0))$. Then by Lemma \ref{lemaux0} we get
   \begin{equation*}
        v(x_\circ,t) \le C r^{-n} \int_{ M\setminus \varphi(\B_{1/10}(q')) } H_M(z,q',t_\circ+r^2) dz \le Cr^{-n} \exp\left(-\frac{c}{t_\circ+r^2}\right)  ,
    \end{equation*}
where $C,c>0$ depend only on $n$. Now, for small times $t_\circ \le 1/100 $ choosing $r^2 = t_\circ $ (together with the probabilistic argument above) gives the result, since one can absorb the term $r^{-n}=t_\circ^{-n/2}$ in the exponential up to slightly decreasing the value of $c$. For non-small times $t_\circ > 1/100$, one can just take $r=1/10$ and obtain the upper bound by a constant as desired. This concludes the proof of \eqref{wnthiowhoihw2}.

\vsp 
Now, similarly to above, for all $r\in (0,1/4)$ and $(x_\circ,t_\circ)\in \B_{1/2}(0)\times (0,\infty)$ the rescaled function $\overline u(x,t) = u(x_\circ +rx,t_\circ + r^2t)$ satisfies a (uniformly) parabolic equation with smooth coefficients (since the bounds on every $C^k$ norm of the coefficients only improve as $r$ gets smaller) and, from \eqref{wnthiowhoihw2}, we have $|\overline u|\le C \exp(-c/t)$ in $\B_{1}\times(0,1)$. Hence standard parabolic Schauder estimates give
\[
\bigg|\frac{\partial^{|\alpha|}}{\partial x^{\alpha}} \, \overline u\bigg| \le C \exp(-c/t), \s\mbox{for } (x,t)\in \B_{1/2}(0) \times [1/2,1) \,,
\]
for every multi-index $\alpha$ with  $|\alpha|\le \ell$, 
with $C>0$ depending only on $n$ and $\ell$ and $c>0$ as above.

\vsp
After scaling back the estimate above we obtain, for all $r\in (0,1/4]$
\[
\bigg|\frac{\partial^{|\alpha|}}{\partial x^{\alpha}}  u(x_\circ, t_\circ+ t )\bigg| \le Cr^{-|\alpha|} \exp(-c/r^2), \s\mbox{for } (x_\circ,t )\in \B_{1/2}(0) \times [r^2/2,r^2) \,.
\]

Then, for ``non-small'' times  $t_\circ\ge 1/16$ we notice that \eqref{estder} follows taking $r=1/4$. On the other hand, for small times  $t_\circ\in (0,1/16)$ we obtain \eqref{estder}  taking $r^2=t_\circ$, bounding $r^{-|\alpha|}$ by $t_\circ^{-\ell/2}$, and absorbing (chosing $c>0$ smaller and $C$ larger) this negative power of $t_\circ$ in the exponential.
\end{proof}

\begin{lemma}\label{lemaux12}
Under the same assumptions as in Theorem \ref{prop:kern1},
we have
\begin{equation}\label{estder2}\int_{M\setminus\varphi(\B_1(0))}
    \bigg|\frac{\partial^{|\alpha|}}{\partial x^{\alpha}}  H_M(\varphi(x),q,t)\bigg| dV_q \le C \exp(-c/t), \s\mbox{for } (x,t)\in \B_{1/2}(0) \times [0,\infty)
\end{equation}
and for every multi-index $\alpha$ with  $|\alpha|\le \ell$, 
with $C,c>0$ depending only on $n$ and $\ell$.
\end{lemma}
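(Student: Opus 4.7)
The plan is to deduce the $L^1$ bound on $|\partial_x^{\alpha} H_M|$ via a duality argument, converting it into an $L^{\infty}$ bound on the heat semigroup $P_t$ applied to a bounded function supported outside $\varphi(\B_1(0))$. Once we have that $L^{\infty}$ bound, the pointwise control on $\partial_x^{\alpha} P_t f$ will follow from the same parabolic Schauder machinery used in the proof of Lemma \ref{lemaux1}.

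Concretely, I would first fix $(x,t) \in \B_{1/2}(0)\times(0,\infty)$ and, for every compact $K\subset M\setminus \varphi(\B_1(0))$, define
$$f_K(q) := \mathrm{sgn}\bigl(\partial_x^{\alpha} H_M(\varphi(x),q,t)\bigr)\,\chi_K(q),$$
so that $\|f_K\|_{L^{\infty}} \le 1$, $f_K$ is compactly supported, and $\mathrm{supp}(f_K)\subset M\setminus \varphi(\B_1(0))$. Then $\int_{K} |\partial_x^{\alpha} H_M(\varphi(x),q,t)|\,dV_q = \int_M f_K(q)\, \partial_x^{\alpha} H_M(\varphi(x),q,t)\,dV_q$; since $f_K$ has compact support bounded uniformly away from $\varphi(x)$ (being supported outside $\varphi(\B_1(0))$), the integrand is smooth and compactly supported in $q$, so one may legitimately interchange derivative and integral to identify this quantity with $\partial_x^{\alpha}(P_t f_K)(\varphi(x))$, where $(P_{\tau} f_K)(y):=\int_M H_M(y,q,\tau) f_K(q)\, dV_q$. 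Exhausting $M\setminus \varphi(\B_1(0))$ by an increasing sequence of compact sets and appealing to monotone convergence, it suffices to bound $|\partial_x^{\alpha}(P_t f_K)(\varphi(x))|$ by $C\exp(-c/t)$ uniformly in $K$.

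The core estimate is the $L^{\infty}$ bound $|P_t f_K(\varphi(y))|\le C\exp(-c/t)$ for $y$ in a slightly enlarged ball, say $\B_{7/8}(0)$. Since $|f_K|\le 1$ and $f_K$ vanishes on $\varphi(\B_1(0))$,
$$|P_t f_K(\varphi(y))| \le \int_{M\setminus \varphi(\B_1(0))} H_M(\varphi(y),q,t)\, dV_q,$$
and the key point is to bound this tail using Lemma \ref{lemaux0} after a small rescaling/re-centering. Pick a small fixed $\rho>0$ (e.g.\ $\rho=1/16$) so that $\varphi(y+\B_{\rho}(0))\subset \varphi(\B_1(0))$ for all $y\in \B_{7/8}(0)$, set $\psi(z):=\varphi(y+\rho z)$, and apply the scaling identities of Remark \ref{flatscalingrmk} to check that ${\rm FA}_0(M,\rho^{-2}g,1,\varphi(y),\psi)$ holds. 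Lemma \ref{lemaux0} on the rescaled manifold, together with the scaling of the heat kernel, yields
$$\int_{\varphi(y+\B_{\rho/2}(0))} H_M(\varphi(y),q,t)\, dV_q \ge 1 - C\exp(-c\rho^2/t),$$
and since $\int_M H_M(\varphi(y),q,t)\,dV_q\le 1$ this gives the desired tail bound $\int_{M\setminus\varphi(\B_1(0))} H_M(\varphi(y),q,t)\,dV_q \le C\exp(-c/t)$ on $\B_{7/8}(0)$.

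At this point $(y,\tau)\mapsto (P_{\tau} f_K)(\varphi(y))$ solves a uniformly parabolic equation with smooth coefficients in the chart (these coefficients are the $g_{ij}$ controlled by the flatness assumption, independent of $K$), and is bounded by $C\exp(-c/\tau)$ on $\B_{7/8}(0)\times(0,\infty)$. A rescaling-and-Schauder argument identical to the one used in Lemma \ref{lemaux1}, step 2 (pick $r=\sqrt{t_\circ}$ for small $t_\circ$ and $r$ a fixed constant for large $t_\circ$, absorbing any polynomial factors $r^{-|\alpha|}$ into the exponential by slightly shrinking $c$), upgrades this to $|\partial_x^{\alpha}(P_t f_K)(\varphi(x))|\le C\exp(-c/t)$ for $x\in \B_{1/2}(0)$, uniformly in $K$. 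Passing to the limit along the exhaustion concludes the proof. The only genuine subtlety is the rigorous interchange of derivative and integral, which is bypassed cleanly by using compactly supported $f_K$ and monotone convergence; the probabilistic and PDE inputs are straightforward reuses of Lemmas \ref{lemaux0} and \ref{lemaux1}.
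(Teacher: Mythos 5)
Your argument is correct and essentially identical to the paper's: the paper also runs a duality argument, setting $u(x,t)=\int_{M\setminus\varphi(\B_1(0))} H_M(\varphi(x),q,t)\sigma(q)\,dV_q$ for an arbitrary sign function $\sigma$, bounding $|u|$ via Lemma~\ref{lemaux0}, upgrading to derivative bounds exactly as in Lemma~\ref{lemaux1}, and choosing $\sigma$ at the end to realize the integral of $|\partial_x^\alpha H_M|$; your compact-exhaustion/monotone-convergence device just handles the interchange of derivative and integral more explicitly. (Minor slips in the rescaling step --- it should be ${\rm FA}_0(M,\rho^2 g,1,\varphi(y),\psi)$ rather than $\rho^{-2}g$, and the exponent is $-c/(\rho^2 t)$ rather than $-c\rho^2/t$ --- but since $\rho$ is a fixed dimensional constant these do not affect the conclusion.)
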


\begin{proof}
It is similar to the proof of Lemma \ref{lemaux1}. 
Let $\sigma: M\setminus\varphi(\B_1(0))\to \{+1,-1\}$ be any measurable function to be chosen. Consider 
\[u(x,t) : = \int_{M\setminus\varphi(\B_1(0))}H_M(\varphi(x),q,t) \sigma(q)dV_q,\] 

By Lemma \ref{lemaux0} --- since $H_M \ge 0$ and $\int_M H_M(p,q,t)dV_q\le 1$ --- we obtain
\begin{equation}\label{wnthiowhoihw}
    |u(x,t)|\le \int_{M\setminus\varphi(\B_{1/4}(0))} H(\varphi(x),q,t) dV_q \le C\exp(-c/t),\qquad \forall (x,t)\in \B_{3/4}(0)\times [0,\infty).
\end{equation}
Notice that in this estimate, $C$ and $c$ are positive dimensional constants (and in particular, they do not depend on the choice of $\sigma$). Also, by the superposition principle $u$ satisfies $u_t =Lu$, in $\B_1(0)\times [0,\infty)$ and $u\equiv 0$ at $t=0$, where $L$ is as in \eqref{Lu17486}.

\vsp
Now proceeding exactly as in the proof of Lemma \ref{lemaux1} we obtain that 
\[
\bigg|\frac{\partial^{|\alpha|}}{\partial x^{\alpha}} u\bigg| \le C \exp(-c/t), \s\mbox{for } (x,t)\in \B_{1/2}(0) \times [0,\infty)
\]
for $|\alpha|\le \ell$.
Now, for any given  $\alpha$, $x$, and $t$, we can choose $\sigma: M\setminus\varphi(\B_1(0))\to \{+1,-1\}$ so that 
\[
\frac{\partial^{|\alpha|}}{\partial x^{\alpha}}   u(x,t) = 
\int_{M\setminus\varphi(\B_1(0))}\frac{\partial^{|\alpha|}}{\partial x^{\alpha}}H_M(\varphi(x),q,t) \sigma(q)dV_q
=\int_{M\setminus\varphi(\B_1(0))}\bigg|\frac{\partial^{|\alpha|}}{\partial x^{\alpha}}H_M(\varphi(x),q,t)\bigg| dV_q,
\]
and we are done.
\end{proof}

\begin{lemma}[\textbf{Localization principle}]\label{lemaux2}
Let $(M,g)$ and  $(M',g')$ be  two Riemannian $n$-manifolds. Assume that both $M$  and $M'$ satisfy the flatness assumptions ${\rm FA_\ell}(M,g,1,p, \varphi)$ and ${\rm FA_\ell}(M',g,1,p', \varphi')$ respectively, and suppose that $g_{ij}\equiv g'_{ij}$ in $\B_1(0)$ in the coordinates induced by $\varphi^{-1}$ and $(\varphi')^{-1}$. 

Then, letting $H(x,y,t): = H_M(\varphi(x),\varphi(y),t)$ and $H'(x,y,t): = H_{M'} (\varphi'(x),\varphi'(y),t)$, we have that the difference $(H-H')(x,y,t)$ is of class $C^\ell$ in  $\B_{1/2}(0) \times \B_{1/2}(0)\times [0,\infty)$ and 
\[
\bigg|\frac{\partial^{|\alpha|}}{\partial x^{\alpha}} \frac{\partial^{|\beta|}}{\partial y^{\beta}} (H-H')(x,y,t)\bigg| \le C \exp(-c/t)\quad \mbox{for } (x,y,t)\in \B_{1/2}(0) \times \B_{1/2}(0)\times [0,\infty),
\]
whenever $\alpha$ and $\beta$ are multi-indices satisfying $|\alpha|+|\beta| \le \ell$, 
with $C,c>0$ depending only on $n$ and $\ell$.
\end{lemma}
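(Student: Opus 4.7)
The plan is to exploit that, in the common chart, $w := H - H'$ solves a homogeneous parabolic equation (since the Laplace--Beltrami operators agree on $\B_1(0)$) and carries no initial singularity (the Dirac initial masses cancel because the volume forms also agree there). Hence $w$ is driven entirely by the geometry outside the common region, which produces bounds of order $\exp(-c/t)$. Concretely, I will obtain exponentially small boundary values for $w$ on $\partial \B_{7/8}(0)$ using Lemma~\ref{lemaux1} in a shifted chart, verify the vanishing of $w$ up to the initial time by a probabilistic coupling, and close the interior estimate via the parabolic maximum principle together with interior Schauder estimates.

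Since $g_{ij} \equiv g'_{ij}$ on $\B_1(0)$, both the operator $L$ of \eqref{Lu17486} and the volume form $\sqrt{|g|}\,dx$ are common to $M$ and $M'$ in this ball. Consequently, for each fixed $y \in \B_{1/2}(0)$, both $H(\cdot, y, \cdot)$ and $H'(\cdot, y, \cdot)$ solve $\partial_t u = Lu$ in $\B_1(0) \times (0, \infty)$ with the same Dirac initial datum at $y$, so $w(\cdot, y, \cdot)$ is a smooth solution of the same homogeneous equation. For the boundary bound, fix $y_\circ \in \B_{1/2}(0)$ and consider the shifted chart $\psi_{y_\circ}(z) := \varphi(y_\circ + z)$ on $\B_{1/4}(0)$: by Remark~\ref{flatscalingrmk} this realises ${\rm FA}_\ell(M, g, 1/4, \varphi(y_\circ), \psi_{y_\circ})$ (and analogously for $M'$), and Lemma~\ref{lemaux1} applied in this rescaled setting (the rescaling of Remark~\ref{flatscalingrmk}(a) only affects the constants $C, c$), combined with the symmetry of the heat kernel, yields
\[
|w(x, y_\circ, t)| \le C \exp(-c/t) \quad \text{on } \partial \B_{7/8}(0) \times (0, \infty),
\]
since $|x - y_\circ| \ge 3/8 > 1/4$ there. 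The vanishing of $w$ as $t \to 0^+$, even on the diagonal $x = y_\circ$, is verified by a probabilistic coupling: couple the Brownian motions on $M$ and $M'$ starting at $\varphi(x)$ so that they agree until the first exit $\tau$ from $\varphi(\B_{7/8}(0))$; testing against any $f$ supported in $\varphi(\B_{1/2}(0))$ then gives $|\int f \cdot w \, dV| \le 2 \|f\|_\infty \, \mathbb{P}[\tau \le t] \le C \|f\|_\infty \exp(-c/t)$ (standard large-deviation estimate for exits from a ball at distance $\ge 3/8$), and by parabolic Moser/Harnack this $L^1$ bound is upgraded to a pointwise bound.

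With $\Psi(t) := C \exp(-c/t)$, which satisfies $\Psi' \ge 0 = L\Psi$, the parabolic maximum principle applied to the subsolutions $\pm w - \Psi$ on $\B_{7/8}(0) \times (0, \infty)$ then gives $|w(x, y_\circ, t)| \le C \exp(-c/t)$ throughout $\B_{7/8}(0) \times (0, \infty)$, uniformly in $y_\circ \in \B_{1/2}(0)$. Interior parabolic Schauder estimates --- applied to the rescaled solutions $\overline w(x,t) := w(x_\circ + rx, y_\circ, t_\circ + r^2 t)$ exactly as at the end of the proof of Lemma~\ref{lemaux1} --- promote this to $|\partial_x^\alpha w(x, y_\circ, t)| \le C \exp(-c/t)$ in $\B_{3/4}(0) \times (0, \infty)$ for every $|\alpha| \le \ell$. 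Since $w(x, y, t) = w(y, x, t)$ by the symmetry of the heat kernel, the function $\partial_x^\alpha w(x, y, t)$ also solves $\partial_t u = L_y u$ in $y$ with the same $L^\infty$ bound, and a second application of Schauder in the $y$ variable yields the mixed estimate $|\partial_x^\alpha \partial_y^\beta w(x, y, t)| \le C \exp(-c/t)$ on $\B_{1/2}(0)^2 \times (0, \infty)$ for $|\alpha| + |\beta| \le \ell$. The main technical obstacle is the coupling argument: the singular initial trace of $w$ at the diagonal $x = y_\circ$ is not detected by the crude upper bound on each heat kernel separately (which only gives $O(t^{-n/2})$), and one really needs the pathwise identity of the two Brownian motions up to the exit time to conclude that the blow-up cancels at a rate compatible with $\Psi(t)$.
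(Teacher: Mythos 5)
Your proposal follows the same strategy as the paper's proof: exponential off-diagonal smallness of $w := H - H'$ from Lemma~\ref{lemaux1} (applied in a shifted chart), a maximum-principle argument in the common chart using that $w$ solves the heat equation with vanishing initial data, and an interior parabolic Schauder bootstrap for the derivatives. A few comparative remarks. You are right to flag that ``$w$ has zero initial condition'' is delicate on the diagonal, where both $H$ and $H'$ blow up like $t^{-n/2}$; the paper's proof asserts this without elaboration before invoking the maximum principle. Your coupling route is a legitimate repair, but the step ``by parabolic Moser/Harnack this $L^1$ bound is upgraded to a pointwise bound'' is the loosest link (one must pass through a representation formula and account for boundary contributions). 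A cleaner closure is a Duhamel representation through the Dirichlet heat kernel $G$ of a fixed ball $\B_r(0)$ compactly contained in $\B_1(0)$, writing, for $0 < \tau < t$,
\[
w(x, y, t) = \int_{\B_r(0)} G(y, y', t - \tau)\, w(x, y', \tau)\, \sqrt{|g|(y')}\,dy' + (\text{lateral boundary term}).
\]
The lateral term is $\le C\exp(-c/t)$ by the off-diagonal bound, while the first integral tends to $0$ as $\tau\to 0^+$ simply because $H(x,\cdot,\tau)$ and $H'(x,\cdot,\tau)$ converge weak-* to the same Dirac mass at $x$ while $G(y,\cdot, t-\tau)$ is continuous and bounded; this replaces both the coupling and the Moser step and gives the pointwise bound directly. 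Finally, for the mixed derivatives the paper is slightly more economical: it notes that $w$ solves the single equation $\partial_t w = \tfrac12(L_x + L_y)w$ on $\B_1(0)\times\B_1(0)\times(0,\infty)$ and applies parabolic Schauder once in $2n$ spatial variables, whereas your two sequential Schauder passes (first in $x$, then in $y$) require some bookkeeping with shrinking domains so that the output of the first pass covers the range of $y$ needed for the second.
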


\begin{proof}
Let us show that  
\begin{equation}\label{wntiowoi1}
|H-H'|\le  C \exp(-c/t)\quad \mbox{for } (x,y,t)\in \B_{3/4}(0) \times \B_{3/4}(0)\times [0,\infty),
\end{equation}
with $C,c>0$ dimensional constants. 

Indeed, fix $x_\circ \in \B_{3/4}(0)$ and let us show first that we have 
\begin{equation}\label{wntiowoi2}
\big|(H-H')(x_\circ,y,t)\big| \le C\exp(-c/t) \quad \mbox{for all }  y\in \B_{3/4}(0)\setminus \B_{1/8}(x_\circ)
\end{equation}
Indeed, the $L^\infty$ estimate of Lemma \ref{lemaux1} ---appropriately  rescaled to have $\varphi(B_{1/8}(x_\circ))$  instead of $\varphi(\B_1(0))$--- gives
\begin{equation}
H_M(\varphi(x_\circ), \varphi(y),t) \le C\exp(-c/t)\quad \mbox{for all }  y\in \B_{3/4}(0)\setminus \B_{1/8}(x_\circ),
\end{equation}
and the same estimate with $H_{M}$ replaced by $H_{M'}$. Hence \eqref{wntiowoi1} follows using $|H-H'|\le H+H'$.

Now observing that for all $x_\circ$ as above  $u(y,t) :=(H-H')(x_\circ, y,t)$ solves the heat equation
$
(\partial_t -L_y) u=0
$
with zero intial condition, \eqref{wntiowoi1} easily follows from the maximum principle.

Finally, the estimate for the higher derivatives follows from standard parabolic estimates, noticing that $u(x,y,t) := (H-H')(x,y,t)$ solves 
\[
\partial_t u = \frac 1 2  (L_{x,y}  u)\]
where
\[
L_{x,y} u =  \frac{1}{\sqrt{|g(x)|}} \frac{\partial}{\partial x^i} \bigg( \sqrt{|g(x)|} g^{ij}(x) \frac{\partial}{\partial x^j} u \bigg)+\frac{1}{\sqrt{|g(y)|}} \frac{\partial}{\partial y^i} \bigg( \sqrt{|g(y)|} g^{ij}(y) \frac{\partial}{\partial y^j} u\bigg).
\]
is the sum of the Laplace-Beltrami operators with respect to the variables $x$ and $y$ (or, equivalently, the Laplace-Beltrami operator with respect to the product metric in $\B_1(0)\times \B_1(0)$). 
\end{proof}

\begin{proposition}\label{lemaux3}
Assume that $M  = (\R^n, g)$ with $g = (g_{ij}(x))$ satisfying
\begin{equation}\label{asumpg}
        \tfrac 12 {\rm id} \le (g_{ij}) \le 2 {\rm id} \qquad \mbox{and}\qquad \bigg|\frac{\partial^{|\alpha|}}{\partial x^{\alpha}} g_{ij}\bigg|  \le 1\quad \mbox{for all } |\alpha|\le \ell,
\end{equation}
for some $\ell\ge 1$. Let $H(x,y,t)$ be the heat kernel of $M$. 

For $x\in \R^{n}$ let $A(x)$ denote the (unique) positive definite symmetric square root of the matrix $g(x) =  (g_{ij}(x))$, and define $h(z,x, t)$ by the identity 
\[
H(x,y,t) = \frac{1}{t^{n/2}} \,h\bigg(\frac{A(x)(y-x)}{\sqrt t},x,  t\bigg).
\]

Define also:
\[
h_\circ (x, z, t)  = h_\circ (z) :=  \frac{1}{(4\pi)^{n/2}} e^{-|z|^2/4}\qquad \mbox{ and }\qquad \widehat h  := h-h_\circ.
\]

Then, there are positive dimensional $C$ and $c$ such that 
\[  |\widehat h| \le  C \min(1,\sqrt t) e^{-c|z|^2} \quad \mbox{ for all  $(x,z,t)\in \R^n\times\R^n \times (0,\infty)$}.
\]

Moreover, we have
\[
\bigg|\frac{\partial^{|\alpha|}}{\partial x^{\alpha}} \frac{\partial^{|\beta|}}{\partial z^{\beta}}h\bigg|  \le C e^{-c |z|^2}  \quad \mbox{ for all  $(x,z,t)\in 
\R^n
\times\R^n \times (0,1)$ and $\alpha, \beta$ with $|\alpha|+|\beta|\le \ell$},
\]
for positive constants $C$ and $c$ depending only on $n$ and $\ell$.

\end{proposition}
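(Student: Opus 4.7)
The statement has two components: (a) the error estimate $|\widehat h|\le C\min(1,\sqrt t)e^{-c|z|^2}$, and (b) the uniform derivative bounds $|\partial_x^\alpha\partial_z^\beta h|\le Ce^{-c|z|^2}$ for $|\alpha|+|\beta|\le\ell$ and $t\in(0,1)$. The guiding observation is that $h_\circ$ encodes the heat kernel of the \emph{frozen-coefficient} operator $L_x:=g^{ij}(x)\partial_i\partial_j$ at the base point $x$: with respect to the constant volume form $\sqrt{|g(x)|}\,dy$, the heat kernel of $L_x$ is precisely $H_0(x,y,t):=t^{-n/2}h_\circ(A(x)(y-x)/\sqrt t)$, so $\widehat h$ measures the deviation of the true kernel from this Gaussian model.

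For claim (a), the bound with constant factor $1$ is valid for all $(x,z,t)$ and follows from Aronson's two-sided Gaussian estimate (Lemma \ref{globcomparability}) applied to $h$, together with the explicit form of $h_\circ$. For $t\in(0,1]$ I would upgrade the bound to the $\sqrt t$-version via a Duhamel argument. Writing $u:=H-H_0$, one has $(\partial_t-L_y)u=(L_y-L_x)H_0$ with zero initial data, where $L_y$ is the Laplace-Beltrami in the second argument. Because $g$ is $C^1$ with bounded first derivatives, the coefficients of $L_y-L_x$ are $O(|y-x|)$ in the principal part and $O(1)$ in the drift, so combining with the Gaussian decay of $\partial^2 H_0,\partial H_0$ (of sizes $s^{-n/2-1}$ and $s^{-n/2-1/2}$) and absorbing the factor $|y-x|/\sqrt s$ into the Gaussian gives
\begin{equation*}
|(L_y-L_x)H_0(x,y',s)|\le Cs^{-n/2-1/2}e^{-c|y'-x|^2/s}.
\end{equation*}
Duhamel's formula with the Aronson bound on $H$ and the standard Gaussian convolution identity then produces
\begin{equation*}
|u(x,y,t)|\le Ct^{-n/2}e^{-c|y-x|^2/t}\int_0^t s^{-1/2}\,ds = C\sqrt t\,t^{-n/2}e^{-c|y-x|^2/t},
\end{equation*}
which, in the $z$ variable, becomes the desired $|\widehat h|\le C\sqrt t\,e^{-c|z|^2}$.

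For claim (b), I would argue by self-similar rescaling. For every fixed $(x_0,t)\in\R^n\times(0,1)$, the change of variables $w\mapsto x_0+\sqrt t\,A(x_0)^{-1}w$ and $\tau=\cdot/t$ transports the heat equation on $(\R^n,g)$ over time $t$ to the heat equation on $(\R^n,\tilde g)$ over time $1$, where $\tilde g(w):=g(x_0+\sqrt t\,A(x_0)^{-1}w)$ still obeys \eqref{asumpg} (with $C^\ell$-derivatives in fact improved by the factor $t^{|\alpha|/2}\le 1$). The rescaled heat kernel $\tilde H$ satisfies $\tilde H(0,z,1)=h(z,x_0,t)$, and interior parabolic Schauder estimates on unit parabolic cylinders, iterated $\ell$ times and bootstrapped against the Aronson upper bound $|\tilde H|\le Ce^{-c|w-w'|^2}$ as the $L^\infty$ envelope, yield $|\partial^\beta_w\tilde H(0,z,1)|\le Ce^{-c|z|^2}$ uniformly in the family. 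This immediately handles $\partial_z^\beta h$. For the $\partial_x^\alpha$ derivatives, the key point is that the parameter $x_0$ enters the rescaled problem only through the coefficients of $\tilde g$, which depend on $x_0$ in a uniformly $C^\ell$ manner (since $\sqrt t\le 1$ and $A(\cdot)^{-1}$ is $C^\ell$-bounded); the smooth dependence of parabolic solutions on $C^\ell$ coefficients, again via Schauder, transfers the Gaussian bound to the mixed derivatives.

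I expect the main obstacle to be the clean bookkeeping for the $x$-derivatives. Naively, $h(z,x,t)=t^{n/2}H(x,x+\sqrt t A(x)^{-1}z,t)$ involves $x$ in three places, so differentiating through the chain rule mixes $\partial_xH$, $\partial_yH$ and derivatives of $A(x)^{-1}$ in a way that superficially worsens the $t$-behavior. The rescaling viewpoint above sidesteps this entirely by packaging the $x$-dependence into a family of metrics $\tilde g=\tilde g(\,\cdot\,;x_0,t)$ to which uniform parabolic regularity applies; producing the Gaussian envelope for derivatives then requires combining Schauder estimates on small parabolic cylinders centered near $(z,1)$ of size comparable to $1/(1+|z|)$ with the Aronson-type $L^\infty$ control.
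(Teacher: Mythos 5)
Your proposal reaches the same estimates as the paper, but via a genuinely different argument for part~(a), and a dual packaging of essentially the same idea for part~(b); I'll compare the two.

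For the bound $|\widehat h|\le C\sqrt t\,e^{-c|z|^2}$ (the $t\le1$ case), the paper does \emph{not} use Duhamel. Instead it derives a self-similar PDE that $h(z,y,t)$ itself satisfies,
\[
t\,\partial_t h \;=\; a^{ij}(z,y,t)\,\partial_{z^iz^j}h+\bigl(\sqrt t\,b^i(z,y,t)+\tfrac{z^i}{2}\bigr)\partial_{z^i}h+\tfrac n2\,h,
\]
with initial datum $h_\circ(z)$ independent of $y$, and then reads off an equation for $\widehat h=h-h_\circ$ with a forcing term of size $O\bigl(\sqrt t\,(1+|z|^2)\,h_\circ\bigr)$; this smallness comes from the algebraic identity $g^{kl}(y)A^i_k(y)A^j_l(y)=\delta^{ij}$, which forces $|a^{ij}-\delta^{ij}|\le C\sqrt t$. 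They then close the argument with an explicit supersolution barrier $b(z,t)=\sqrt t\,e^{-(1/4-\kappa)|z|^2}$. Your Duhamel argument, writing $u=H-H_0$ with $H_0$ the heat kernel of the frozen operator $g^{ij}(x)\partial_i\partial_j$ relative to the constant volume form $\sqrt{|g(x)|}\,dy$, exploits exactly the same cancellation (the frozen kernel is built from $A(x)$, which is why the inhomogeneity $(L_y-L_x)H_0$ is $O(|y-x|)$ in the principal part), but replaces the barrier by a Gaussian convolution estimate. Both routes are valid; the barrier argument is more self-contained and local, while yours leans on the standard Gaussian–Gaussian convolution inequality and gives a cleaner conceptual picture of where the $\sqrt t$ comes from. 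One needs to be a little careful that the constants $c$ in the Aronson upper bound and in the estimate for $(L_y-L_x)H_0$ need not match, but taking the smaller of the two is harmless.

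For the derivative bounds, your self-similar rescaling and the paper's change to logarithmic time $\tau=\log t$ are two presentations of the same idea. Two remarks. First, your rescaled metric is written incorrectly: the change of variables $y=x_0+\sqrt t\,A(x_0)^{-1}w$ together with the parabolic time rescaling yields the pullback $\tilde g(w)=A(x_0)^{-1}\,g\bigl(x_0+\sqrt t A(x_0)^{-1}w\bigr)\,A(x_0)^{-1}$ (so that $\tilde g(0)=\mathrm{id}$ and $H_{(\R^n,\tilde g)}(0,z,1)=h(z,x_0,t)$), not $g(x_0+\sqrt t A(x_0)^{-1}w)$; as stated your identification $\tilde H(0,z,1)=h(z,x_0,t)$ fails. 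This is a bookkeeping fix, not a structural flaw. Second, and more substantively: by working with $\tilde H$ directly you keep a Dirac initial datum, so the claim that differentiating in $x_0$ is handled by ``smooth dependence of parabolic solutions on coefficients'' requires more care than in the paper, where the change to the variables $(z,y)$ replaces the Dirac mass by the smooth, $y$-independent initial datum $h_\circ(z)$, so that $y$-derivatives of $h$ solve an inhomogeneous problem with zero initial data. Your sketch would need to reproduce this feature (e.g.\ by subtracting the Euclidean Gaussian from $\tilde H$ before differentiating in the parameter). With these repairs your argument for part~(b) is morally the same as the paper's; the paper's formulation is simply the one in which the zero-initial-data structure for parameter derivatives is manifest.
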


\begin{proof}[Proof of Proposition \ref{lemaux3}]

Notice first that since $H(x,y,t) = H(y,x,t)$ we have
\[ H(x,y,t)= 
t^{-n/2} h\bigg(\frac{A(x)(y-x)}{\sqrt t},x,  t\bigg)= t^{-n/2} h\bigg(\frac{A(y)(x-y)}{\sqrt t},y,  t\bigg).\]

Let $L_xf := \frac{1}{\sqrt{ |g|(x)}} \tfrac{\partial}{\partial x^i}\bigg(\sqrt{|g|(x)}g(x)^{ij} \tfrac{\partial}{\partial x^j}f\bigg)$ denote the Laplace-Beltrami operator (with respect to $x$). Direct computation shows:
\[
\begin{split}
LH &=  t^{- \frac n2 - 1} \bigg(\sqrt t \bigg(\frac{\partial_i \big(\sqrt{|g|}g^{ij}\big)}{\sqrt{|g|}} \bigg)(x)  A^l_j(y) \frac{\partial}{\partial  z^l }  h(*)  + g^{ij}(x) \big(A_i^k A_j^l\big)(y) \frac{\partial^2}{\partial  z^k\partial z^l }  h(*)\bigg), \\
\partial_t H &=  t^{- \frac n2 - 1} \bigg( -\frac n 2 h(*) - \frac 1 2 \frac{\partial}{\partial z^l }  h(*) \frac{(A(y)(x-y))^l}{\sqrt t}  + t\partial_t h(*)\bigg),
\end{split}
\]
where 
\[(*) \mbox{ means evaluated at } \bigg( \frac{A(y)(x-y)}{\sqrt t} , y, t\bigg).\]

This leads to the equation for $h = h(z,y,t)$, where we denote $\partial_i : = \frac{\partial}{\partial z^i}$ and $\partial_{ij} : = \frac{\partial^2}{\partial z^i\partial z^j}$, 
\[
t\partial_t h =  \overline L h : = a^{ij}(z,y,t) \partial_{ij} h + \big(\sqrt t b^i(z,y,t) + \tfrac{z^i}{2}\big) \partial_i h + \frac{n}{2} h,
\]
where 
\[
 a^{ij}(z,y,t) := g^{kl}\Big(y + \sqrt t z\Big) \big(A_k^i A_l^j\big)(y) 
\]
and 
\[
 b^i(z,y,t) :=   \bigg(\frac{\partial_k \big(\sqrt{|g|}g^{kl}\big)}{\sqrt{|g|}} \bigg)\Big(y +\sqrt t z \Big)A^i_l(y);
\]
with initial condition: 
\[h(z,y,0^+) = h_\circ(z) = (4\pi)^{-\frac n 2} e^{-|z|^2/4}.\] 
(Notice that we defined $h$ so that its initial condition is independent of $y$.) 

We emphasize that, by the assumption \eqref{asumpg}, this equation is uniformly elliptic, and the derivatives of $a^{ij}$, $b^i$ up to order $\ell$ in the variables $z$ and $y$ are uniformly bounded for times $t\in (0,T_\circ)$ by constants depending only on the constants  $n$ and $T_\circ$. 

Let us now compute an equation for $\widehat h= h - h_\circ$. Since  
\[\delta^{ij} \partial_{ij} h_\circ  +  \tfrac{z^i}{2} \partial_i h + \frac{n}{2}  h_\circ =0, \]
we obtain
\[
\begin{split}
t\partial_t \widehat h - \overline L \widehat h &= \overline L h_\circ = (a^{ij}-\delta^{ij})\partial_{ij} h_\circ   + \sqrt t b^i \partial_i h_\circ
\\
& = \big((a^{ij}-\delta^{ij}) (z_i z_j -\delta_{ij}) -  \sqrt t b^i \tfrac{z^j}{2}\delta_{ij} \big) h_\circ\, .
\end{split} 
\]
We emphasize that $\widehat h$ satisfies the initial condition
\[
\widehat h (z,y, 0) \equiv 0
\]

Notice that  (since by definition $A(y)$ is a square root of $g(y)$) we have, for all $y$
\[
g^{kl}(y) \big(A_k^i A_l^j\big)(y) = \delta^{ij}
\]
and hence, since $g^{kl}$ is smooth,
\[
|a^{ij}(z,y,t) -\delta^{ij}| \le  C \sqrt t
\]

Hence, we have 
\begin{equation}\label{wtjhwiohwioh}
|t\partial_t \widehat h - \overline L \widehat h| \le C (1+ |z|^2)\sqrt t\, h_\circ
\end{equation}

Let us now find some barrier allowing us to control $\widehat h$. 
We can use as barrier
\[
b(z,t) : =  \sqrt t e^{ -(1/4 - \kappa )|z|^2 }
\]

Direct computation shows that, for $\sqrt t< \theta \kappa$ (so that $a^{ij}\delta_{ij}\ge  n -C\theta \kappa  $ and $|\delta^{ij}-a^{ij}|z^k z^l\delta _{ik}\delta_{jl}\le C\theta \kappa |z|^2$)
\[
\begin{split}
t\partial_t b -\overline L b
&=   \bigg( \frac {1}{2}  -  4\big(\tfrac 1 4  - \kappa \big)^2  \, (a^{ij})z^k z^l\delta _{ik}\delta_{jl} + 2(\tfrac 1 4  - \kappa \big) \,a^{ij}\delta_{ij}   + (\sqrt t b^i + \frac{z^i}{2}) 2(\tfrac 1 4  - \kappa \big) z^j\delta_{ij}  -\frac{n}{2}   \bigg) b  \\
&\ge \bigg( \frac {1}{2}   + \big(\tfrac 1 4  - \kappa \big)4\kappa |z|^2  - C\theta\kappa |z|^2 - C\kappa - C\theta \kappa |z|   \bigg) b 
\\
& \ge \big(\tfrac {1}{4}  + \tfrac{\kappa}{2}|z|^2\big) b \ge 0,
\end{split} 
\]
provided we chose $\theta>0$ and $\kappa>0$ sufficiently small.

Since clearly $b \ge \sqrt t \, h_\circ$ we obtain that $Cb$ is a supersolution of 
\eqref{wtjhwiohwioh} for $\sqrt t< \theta \kappa$.
This shows that $|\widehat h|\le Cb$ for all $t$ small enough.

\vsp
Notice that the estimate $|\widehat h|\le Cb$ (fixing $\kappa>0$ and $\theta>0$ small dimensional) shows, in particular, that 
\begin{equation}\label{whiorhiowh}
    |\widehat h(z,y,t)|\le C\sqrt{t}\exp(-c|z|^2)
\end{equation}
holds with  $c>0$ dimensional for all ``small'' times $t\in (0,\theta^2\kappa^2)$.
On the other hand, for ``non-small'' times $t\ge\theta^2\kappa^2$, the standard heat kernel estimate \eqref{whitohw736}  for $H$ (which holds with $c_i$ dimensional) immediately yields \eqref{whiorhiowh} with $\sqrt{t}$ replaced by $1$.

\vsp
In order to bound the derivatives of $h$ with respect to $z$ we notice we notice that, in logarithmic time $\tau =\log t$,  the function $h(z,y, e^\tau)$ satisfies, for $y$ fixed, a standard parabolic equation with smooth coefficients in the domain $\R^n \times (-\infty, 0)$. Then, thanks to \eqref{whiorhiowh}, applying standard parabolic estimates in parabolic cylinders $\{|x-x_\circ|<2, |\tau-\tau_\circ|<2 \}$ we easily obtain the claimed bounds for all partial derivatives of $h$ with respect to $z$.

\vsp
In order to show the regularity in $y$, one can then differentiate the equation with respect to $y$ as many times as needed (the coefficients depend in a very smooth way also in $y$) and notice that the initial condition will be zero (since $h_\circ$ is independent of $y$). By standard parabolic regularity arguments (e.g., using a Duhamel-type formula to represent the solutions), we obtain the estimates.
\end{proof}

\subsubsection{Estimates for the singular kernel \texorpdfstring{$K_s(p,q)$}{} }

    As a first consequence of Lemma \ref{lemaux2} we have that the following ``local version'' of Lemma \ref{globcomparability} above also holds. 

\begin{lemma}\label{loccomparability}
    Let $s_0 \in (0,2)$ and $s \in (s_0,2)$. Let $(M,g)$ be a Riemannian $n$-manifold and $ p \in M$. Assume that ${\rm FA}_1(M,g,p,1,\varphi)$ holds. Then 
\begin{equation*}
   c_7 \frac{\alpha_{n,s}}{|x-y|^{n+s}} \le  K_s(\varphi(x),\varphi(y)) \le  c_8 \frac{\alpha_{n,s}}{|x-y|^{n+s}} \,, 
\end{equation*}
for all $x,y \in \B_{1/2}(0)$, where $c_7,c_8>0$ depends on $n$ and $s_0$.
\end{lemma}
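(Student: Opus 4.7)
The idea is to compare $(M,g)$ locally with a model noncompact manifold where the global comparability \autoref{globcomparability} applies, and to control the comparison error using the localization principle \autoref{lemaux2}. Concretely, using the flatness assumption ${\rm FA}_1(M,g,1,p,\varphi)$, I would extend the metric $g$ (read in coordinates $\varphi^{-1}$) from $\B_1(0)$ to a smooth metric $g'$ defined on all of $\R^n$, satisfying $\tfrac14\,\mathrm{id}\le g'\le 4\,\mathrm{id}$ and $|Dg'_{ij}|\le 1$ (a standard cutoff achieves this). Let $M':=(\R^n,g')$ with chart $\varphi':=\mathrm{id}$. Since $M$ and $M'$ coincide in coordinates on $\B_1(0)$, \autoref{lemaux2} yields $|H_M(\varphi(x),\varphi(y),t)-H_{M'}(x,y,t)|\le C e^{-c/t}$ for all $(x,y,t)\in \B_{1/2}(0)\times\B_{1/2}(0)\times[0,\infty)$, with $C,c$ dimensional. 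Integrating this against the subordination weight $\frac{s/2}{\Gamma(1-s/2)}\frac{dt}{t^{1+s/2}}$ and using $\int_0^\infty e^{-c/t}t^{-1-s/2}\,dt=c^{-s/2}\Gamma(s/2)$ (together with the fact that $\frac{s/2\cdot\Gamma(s/2)}{\Gamma(1-s/2)}=\frac{\Gamma(1+s/2)}{\Gamma(1-s/2)}$ remains bounded for $s\in(s_0,2)$) one obtains $|K_s(\varphi(x),\varphi(y))-K_{M',s}(x,y)|\le \bar C$ with $\bar C=\bar C(n,s_0)$. \autoref{globcomparability} applied to $M'$ then gives $c_5\,\alpha_{n,s}/|x-y|^{n+s}\le K_{M',s}(x,y)\le c_6\,\alpha_{n,s}/|x-y|^{n+s}$.

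The upper bound follows since $|x-y|\le 1$ on $\B_{1/2}(0)$ and $\bar C/\alpha_{n,s}$ is bounded uniformly in $s\in(s_0,2)$ (the $\Gamma(1-s/2)^{-1}$ factors appearing in both $\bar C$ and $\alpha_{n,s}$ cancel in the ratio), whence $K_s\le c_6\,\alpha_{n,s}/|x-y|^{n+s}+\bar C\le c_8\,\alpha_{n,s}/|x-y|^{n+s}$. For the lower bound, set the threshold $\delta:=\bigl(c_5\,\alpha_{n,s}/(2\bar C)\bigr)^{1/(n+s)}$, which is bounded below by a positive constant depending only on $n,s_0$. When $|x-y|\le\delta$, the error $\bar C$ is absorbed into half of the $K_{M',s}$ lower bound, giving $K_s(\varphi(x),\varphi(y))\ge (c_5/2)\,\alpha_{n,s}/|x-y|^{n+s}$.

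The main obstacle is the complementary range $|x-y|\in[\delta,1]$, because the trivial estimate $K_s\ge K_{M',s}-\bar C$ can be negative there ($\bar C$ may dominate). The plan in this regime is to produce a uniform pointwise lower bound of the form $K_s(\varphi(x),\varphi(y))\ge c\,\alpha_{n,s}$ with $c=c(n,s_0)>0$, which suffices since in this range $|x-y|^{n+s}\ge \delta^{n+s}$ is bounded below in terms of $n,s_0$. To this end, apply \autoref{lemaux0} at a flatness ball centered at $\varphi(x)$ (available at an appropriate smaller scale for every $x\in\B_{1/2}(0)$ by \autoref{flatscalingrmk}): for all $t$ in some fixed dimensional interval, at least half of the heat mass emanating from $\varphi(x)$ is concentrated in a neighborhood of $\varphi(x)$ of bounded Riemannian volume, so some point $q^*$ in that neighborhood satisfies $H_M(\varphi(x),q^*,t)\ge c_\circ>0$. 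Then, since in coordinates $\varphi^{-1}$ the heat equation in the $q$-variable has uniformly elliptic coefficients with bounded first derivatives (by ${\rm FA}_1$), the parabolic Harnack inequality propagates this pointwise positivity to $H_M(\varphi(x),\varphi(y),t')\ge c'>0$ for all $y\in\B_{1/2}(0)$ and $t'$ in a shifted dimensional interval. Integrating against the subordination weight (and observing that the remaining $\Gamma(1-s/2)^{-1}$ factor combines with that in $\alpha_{n,s}$ to give a ratio bounded below for $s\in(s_0,2)$) yields $K_s\ge c\,\alpha_{n,s}$, completing the lower bound.
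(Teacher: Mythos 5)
Your proposal follows the paper's line of attack: extend the metric by a cutoff, compare heat kernels via the localization principle (Lemma \ref{lemaux2}), integrate against the subordination weight, and invoke the global comparability of Lemma \ref{globcomparability}. Where you add genuine value is in the lower bound at intermediate distances: the paper simply asserts the conclusion ``follows directly'' on $\B_{1/4}(0)$ and is extended to $\B_{1/2}(0)$ by covering, but as you correctly note, the additive error $\bar C$ from Lemma \ref{lemaux2} --- while comparable to $\alpha_{n,s}$ uniformly for $s\in(s_0,2)$, thanks to the cancellation of the $\Gamma(1-s/2)^{-1}$ factors --- can in principle overwhelm $c_5\,\alpha_{n,s}/|x-y|^{n+s}$ when $|x-y|$ is of order one, since $c_5$ (from Aronson's bound) and $\bar C$ (from the parabolic estimates in Lemma \ref{lemaux2}) are unrelated dimensional constants. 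Your remedy for the regime $|x-y|\ge\delta$ --- a direct pointwise lower bound $K_s\gtrsim\alpha_{n,s}$ via the concentration Lemma \ref{lemaux0} combined with the parabolic Harnack inequality, then absorbing $|x-y|^{n+s}\ge\delta^{n+s}$ with $\delta=\delta(n,s_0)>0$ --- is a correct and necessary addition that the paper leaves implicit. One small technical wrinkle: since ${\rm FA}_1$ only gives $g$ on $\B_1(0)$, you cannot cut off so that $g'\equiv g$ on all of $\B_1(0)$ as stated; like the paper, you should cut off within $\B_1(0)$ (so $g'=g$ on a smaller ball), rescale to apply Lemma \ref{lemaux2}, obtain the estimate first on $\B_{1/4}(0)$, and then cover $\B_{1/2}(0)$ by smaller balls using the scaling properties in Remark \ref{flatscalingrmk}. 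With that adjustment the argument is sound.
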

\begin{proof}
   Take $\eta \in C^\infty_c(\B_{1}(0))$ with $\chi_{\B_{1/2}(0)} \le \eta \le \chi_{\B_1(0)}$ and let $g'_{ij}:=g_{ij} \eta +(1-\eta)\delta_{ij}$. This is a metric on $\R^n$ with $g'_{ij}=g_{ij}$ in $\B_{1/2}(0)$. Denote by $K_s,K_s'$ and $H,H'$ the singular kernels and heat kernels of $(M,g)$ and $(\R^n, g')$ respectively. Then, by Lemma \ref{lemaux2} applied to the manifolds $(M,g)$ and $(\R^n, g')$ we have, for $x,y \in \B_{1/4}(0)$:
\begin{align*}
    \big| K_s(\varphi(x), \varphi(y))-K_s'(x,y) \big| & \le \frac{s/2}{\Gamma(1-s/2)} \int_{0}^\infty \big| H(\varphi(x), \varphi(y),t)-H'(x,y,t) \big| \frac{dt}{t^{1+s/2}} \\ & \le \frac{C s}{\Gamma(1-s/2)} \int_{0}^{\infty} e^{-c/t} \frac{dt}{t^{1+s/2}} \le C(2-s) \,,
\end{align*}
for some dimensional $C=C(n)$. Then, the result follows directly by Lemma \ref{globcomparability} (and the explicit formula \eqref{alphadef} for $\alpha_{n,s}$) for $x,y \in \B_{1/4}(0)$, and the conclusion also holds for $x,y \in \B_{1/2}(0)$ by a standard covering argument.
\end{proof}

Now, we have all the ingredients to give the proof of Theorem \ref{prop:kern1}.

\begin{proof}[Proof of Theorem \ref{prop:kern1}]

Note that the statement is scaling invariant. Hence, with no loss of generality, we can (and do) assume that $R=1$. Moreover, it suffices to consider the case $M=(\R^n,g)$, $p =0$,  $\varphi = {\rm id}$, and $g_{ij}$ satisfying the assumptions of Proposition \ref{lemaux3}:

\vsp
Indeed, similarly to the proof of Corollary \ref{loccomparability}, in the general case we can fix a radially nonincreasing cutoff function $\eta\in C^\infty_c(\B_1)$ such that $\eta\equiv 1$ in $\B_{7/8}$ and consider the ``extended'' metric $g'_{ij} : = g_{ij}\eta + \delta_{ij}(1-\eta)$. 
Observe that $(M,g)$ and $(\R^n,g')$ the assumptions of Lemma \ref{lemaux2} with $M'=\R^n$ and $\varphi'= {\rm id}$. Let $H(x,y,t)$ and $H'(x,y,t)$ be defined as in Lemma \ref{lemaux2}.

\vsp
Recall that, by definition, for all $x,y\in \B_1(0)$ 
\begin{equation}\label{wjhtihwi9h1}
    K(x,y) = K_s(\varphi(x),\varphi(y)) = c_s\int_0^\infty H_M(\varphi(x),\varphi(y),t) \frac{dt}{t^{1+s/2}} = c_s \int_0^\infty H(x,y,t) \frac{dt}{t^{1+s/2}},
\end{equation}
where $c_s =\frac{s/2}{\Gamma(1-s/2)}$.
Let likewise
\[
K'(x,y) =  c_s \int_0^\infty H'(x,y,t) \frac{dt}{t^{1+s/2}}.
\]
Now, thanks to Lemma \ref{lemaux2} we obtain, for all $x,y\in \B_{1/2}$:
\[
\bigg|\frac{\partial^{|\alpha|}}{\partial x^{\alpha}} \frac{\partial^{|\beta|}}{\partial y^{\beta}} (K-K')(x,y,t)\bigg|\le c_s\int_0^\infty  \bigg|\frac{\partial^{|\alpha|}}{\partial x^{\alpha}} \frac{\partial^{|\beta|}}{\partial y^{\beta}} (H-H')(x,y,t)\bigg|\frac{dt}{t^{1+s/2}} \le Cs \int_0^\infty e^{-c/t} \frac{dt}{t^{1+s/2}} \le C.
\]

So, as claimed, we are left to prove the estimate for the $M=(\R^n,g)$, $p =0$,  $\varphi = {\rm id}$, and $g_{ij}$ satisfying the assumptions of Proposition \ref{lemaux3}. 

Recalling \eqref{wjhtihwi9h1}, notice that
\begin{equation}\label{whtuiewgoug}
k(x,z) = K(x, x+z) = c_s \int_0^\infty H(x,x+z,t) \frac{dt}{t^{1+s/2}} = c_s \int_0^\infty \,h\bigg(\frac{A(x)z}{\sqrt t},x,  t\bigg)\frac{dt}{t^{n/2 +1+s/2}}.
\end{equation}
Also, recalling  that $h_\circ(z) := (4\pi)^{-n/2} e^{-|z|^2/4}$, we have 
\[
\begin{split}
\widehat k(x,z) &=k(x,z)- \frac{\alpha_{n,s}}{|A(x)z|^{n+s}} =  c_s \int_0^\infty \bigg(h\bigg(\frac{A(x)z}{\sqrt t},x,  t\bigg)- h_\circ\bigg(\frac{A(x)z}{\sqrt t}\bigg) \bigg)\frac{dt}{t^{n/2 +1+s/2}} \\ &= c_s 
\int_0^\infty \widehat h\bigg(\frac{A(x)z}{\sqrt t},x ,t\bigg) \frac{dt}{t^{n/2 +1+s/2}} ,
\end{split}
\]

Therefore using the heat kernel estimates from Proposition \ref{lemaux3} (and noticing $|A(x)z|\ge \tfrac{1}{\sqrt 2} |z|$ for all $x, z$ by assumption) we obtain 
\[
\begin{split}
|\widehat k(x,z)| \le c_s
\int_0 \bigg|\widehat h\bigg(\frac{A(x)z}{\sqrt t},x ,t\bigg) \bigg| \frac{dt}{t^{n/2 +1+s/2}} \le Cs \int_0^\infty \sqrt t \exp(-c |z|/\sqrt t) \frac{dt}{t^{n/2 +1+s/2}}  = C|z|^{1-n-s}.
\end{split}
\]
This proves \eqref{remaining0}. Similarly, the estimates \eqref{remaining} follow differentiating \eqref{whtuiewgoug} and using the corresponding estimates for derivatives of the heat kernel from Proposition \ref{lemaux3}.

Finally, \eqref{remaining2} and \eqref{remaining3} follow analogously integrating the heat kernel estimates in Lemmas \ref{lemaux1} and \ref{lemaux12}, respectively.
\end{proof}

The next property concerns the behavior of the kernel when the two points $p$ and $q$ are separated from each other.

\begin{proposition}\label{prop:kern2}
Let $(M,g)$ be a  Riemannian $n$-manifold and $s\in (0,2)$. Assume that for some $p,q\in M$  both ${\rm FA_\ell}(M, g, 1, p, \varphi_p)$  and ${\rm FA_\ell}(M, g, 1, q, \varphi_q)$  hold, and suppose that $\varphi_p(\B_1(0))\cap \varphi_q(\B_1(0))= \varnothing$.  Put $K_{pq}(x,y): = K_s(\varphi_p(x), \varphi_q(y))$. Then
\[
\bigg|\frac{\partial^{|\alpha|}}{\partial x^{\alpha}} \frac{\partial^{|\beta|}}{\partial y^{\beta}}   K_{pq}(x,y)\bigg| \le C(n,\ell)\quad 
\mbox{for all }|x|<\tfrac 1 2 \mbox{ and }|y|< \tfrac 1 2,
\]
whenever $|\alpha|+|\beta| \le \ell$.
\end{proposition}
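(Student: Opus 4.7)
The plan is to reduce the statement to heat kernel estimates via the defining formula
\[
K_{pq}(x,y) = c_s \int_0^\infty u(x,y,t) \, \frac{dt}{t^{1+s/2}}, \qquad u(x,y,t) := H_M(\varphi_p(x), \varphi_q(y), t),
\]
and then produce mixed derivative bounds on $u$ by applying (the ideas of) Lemma \ref{lemaux1} twice, once in each coordinate block.

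\medskip

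\noindent\emph{Step 1: pure $x$-derivatives.} For every fixed $y\in \B_1(0)$ the point $\varphi_q(y)$ lies in $M\setminus \varphi_p(\B_1(0))$ by the disjointness hypothesis. Applying Lemma \ref{lemaux1} with $q = \varphi_q(y)$ inside the flatness chart $\varphi_p$ yields, with constants depending only on $n$ and $\ell$,
\[
\bigg|\frac{\partial^{|\alpha|}}{\partial x^{\alpha}} u(x,y,t)\bigg| \le C\exp(-c/t), \qquad (x,y,t)\in \B_{1/2}(0)\times \B_{1}(0)\times [0,\infty),
\]
for every multi-index $\alpha$ with $|\alpha|\le\ell$.

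\medskip

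\noindent\emph{Step 2: bootstrap to mixed derivatives.} Fix $\alpha$ with $|\alpha|\le \ell$ and set $v(x,y,t):= \tfrac{\partial^{|\alpha|}}{\partial x^\alpha} u(x,y,t)$. Since $u$ solves the heat equation in the second argument with Laplace--Beltrami operator $L_y$ in the $\varphi_q$-chart, commuting $x$- and $y$-derivatives gives $\partial_t v = L_y v$ in $\B_1(0)$ (with $x$ as a smooth parameter), and $v(x,y,0^+)=0$ on $\B_1(0)\times \B_1(0)$ because $\varphi_p(x)\ne \varphi_q(y)$ for such $(x,y)$. The flatness assumption ${\rm FA}_\ell(M,g,1,q,\varphi_q)$ forces the coefficients of $L_y$ to satisfy uniform bounds, and Step 1 gives the $L^\infty$ bound $|v(x,y,t)|\le C\exp(-c/t)$ on $\B_{1/2}(0)\times \B_1(0)\times[0,\infty)$. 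We are therefore in exactly the situation considered at the end of the proof of Lemma \ref{lemaux1}: the standard parabolic Schauder estimates applied on appropriately rescaled parabolic cylinders $\{|y-y_\circ|<r,\,|t-t_\circ|<r^2\}$ (choosing $r^2 = t_\circ$ for small times and $r$ of order $1$ otherwise, and absorbing powers of $r^{-1}$ into the exponential) yield
\[
\bigg|\frac{\partial^{|\beta|}}{\partial y^{\beta}} v(x,y,t)\bigg| = \bigg|\frac{\partial^{|\alpha|}}{\partial x^{\alpha}}\frac{\partial^{|\beta|}}{\partial y^{\beta}} u(x,y,t)\bigg| \le C\exp(-c/t),
\]
for all $(x,y,t)\in \B_{1/2}(0)\times \B_{1/2}(0)\times[0,\infty)$ whenever $|\alpha|+|\beta|\le \ell$, with constants depending only on $n$ and $\ell$.

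\medskip

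\noindent\emph{Step 3: integration in $t$.} Differentiating $K_{pq}$ under the integral sign (justified by the exponential decay above) and using Step 2,
\[
\bigg|\frac{\partial^{|\alpha|}}{\partial x^{\alpha}}\frac{\partial^{|\beta|}}{\partial y^{\beta}} K_{pq}(x,y)\bigg| \le c_s \int_0^\infty C\exp(-c/t)\,\frac{dt}{t^{1+s/2}} \le C(n,\ell),
\]
for $|x|,|y|<\tfrac12$, as desired.

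\medskip

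The only delicate point is Step 2, where one must verify that the argument in Lemma \ref{lemaux1}, which is written for functions depending on a single block of coordinates, transfers verbatim to the function $v$ which depends smoothly on the parameter $x$. This is routine: the parabolic regularity theory used there is purely local in $(y,t)$ and is unaffected by the presence of a smooth parameter that satisfies uniform bounds. The disjointness hypothesis is used only once, in Step 1, to ensure that $\varphi_q(y)$ sits outside $\varphi_p(\B_1(0))$ so that Lemma \ref{lemaux1} applies.
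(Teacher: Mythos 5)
Your proof is correct and follows essentially the same approach as the paper: first apply Lemma \ref{lemaux1} to bound the pure $x$-derivatives of $H_M(\varphi_p(x),\varphi_q(y),t)$ (using disjointness to place $\varphi_q(y)$ outside the $\varphi_p$-chart), then bootstrap to mixed derivatives by noting that $\partial_x^\alpha H_*$ solves the heat equation in the $y$-variable with zero initial data and repeating the rescaled parabolic Schauder argument from Lemma \ref{lemaux1}, and finally integrate the resulting $C\exp(-c/t)$ bound against $dt/t^{1+s/2}$.
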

\begin{proof}
Let $H_*(x,y,t) : =H_M(\varphi_p(x), \varphi_q(y),t)$.
It follows from Lemma \ref{lemaux1} that 
\[
\left|\frac{\partial^{|\alpha|}}{\partial x^{\alpha}} H_*(x,y,t) \right|\le C \exp(-c/t)
\]
for all $|x|<\tfrac 3 4$ and $|y|< \tfrac 3 4$, 
where $C$ and $c$ depend only on $n$, and $|\alpha|$.

We now use that (by the symmetry of the heat kernel in $p$ and $q$), for each $x\in \B_{1/2}$ fixed, the function $u(y,t): = \frac{\partial^{|\alpha|}}{\partial x^{\alpha}} H_*(x,y,t)$  is solution of the heat equation $u_t = L u$, in the ball $|y|<1$, where $L$ denotes the Laplace-Beltrami (with respect to $y$, in local coordinates). Since $|u|\le C\exp(-c/t)$ in $\B_{3/4}\times (0,\infty)$,  reasoning exactly as in the proof of Lemma \ref{lemaux1} (only that now the spatial variables are $y$ instead of $x$) we obtain 
\[
\bigg|\frac{\partial^{|\beta|}}{\partial y^{\beta}} u(y,t)\bigg|\le C \exp(-c/t),
\]
for some new positive constants $C$ and $c$ depending only on $n$, and $|\beta|$. 
This shows: 
\[
\bigg| \frac{\partial^{|\alpha|}}{\partial x^{\alpha}}\frac{\partial^{|\beta|}}{\partial y^{\beta}} H_*(x,y,t)\bigg|\le C\exp(-c/t)
\]
Then the proposition follows immediately after noticing that, by definition,
\[
 K_{pq}(x,y) 
= \frac{s/2}{\Gamma(1-s/2)} \int_0^\infty  H_*(x,y,t) \frac{dt}{t^{1+s/2}},
\]
and hence
\[
\bigg|\frac{\partial^{|\alpha|}}{\partial x^{\alpha}} \frac{\partial^{|\beta|}}{\partial y^{\beta}}   K_{pq}(x,y)\bigg| 
=   \bigg| \frac{s/2}{\Gamma(1-s/2)} \int_0^\infty \frac{\partial^{|\alpha|}}{\partial x^{\alpha}}\frac{\partial^{|\beta|}}{\partial y^{\beta}} H_*(x,y,t) \frac{dt}{t^{1+s/2}}\bigg| \le Cs\int_0^\infty \exp(-c/t)\frac{dt}{t^{1+s/2}} \le C \,,
\]
for some constant $C>0$ that depends only on $n$ and $\ell$, and this concludes the proof.
\end{proof}

\subsection{Extension definition}\label{ext section}
% Let $u:M\to \mathbb{R}$ be a function on $M$. The fractional Laplacian of $u$ can be defined as a Dirichlet-to-Neumann map as follows:\\

% Consider the product manifold $(\widetilde{M}, h) := (M \times [0, \infty), g \times \delta_\R )$, where $\delta_\R$ is the standard metric on $\R$, and denote $(x,y)$ for a point in $M \times [0,\infty)$. Let $U(x,y):M \times [0, \infty) \to \R $ solve 
% \begin{equation*}\label{extproblem}
%     \begin{cases} \textnormal{div}_{M\times\R^+}(z^{1-s} \nabla U) = 0  &   \mbox{in} \s \widetilde{M}=M\times (0,\infty) \,,  \\ U(x,0) = u(x) &  \mbox{on} \s \partial{\widetilde{M}} = M \times \{0\} .  \end{cases}
% \end{equation*}
% Then
% \begin{definition}[\textbf{Caffarelli-Silvestre extension definition}]
% The fractional Laplacian is defined as the operator that acts on $u$ by 
% \begin{equation}\label{lapextmfd}\tag{CS}
%     \beta_s^{-1}(-\Delta)^{s/2} u(x) = \lim_{y \to 0^+} -z^{1-s} \partial_y U(x,y). 
% \end{equation}
% \end{definition}

\begin{definition}\label{weighSobspace}
    We define the weighted Sobolev space
   \begin{equation*}
       \widetilde{H}^1(\R^n \times (0,\infty)) = H^1(\R^n \times (0,\infty), z^{1-s}dxdz) 
   \end{equation*}
   as the completion of $C_c^\infty(\R^n \times [0,\infty))$ with the norm
   \begin{equation*}
       \| U\|^2_{\widetilde H^1} := \| U\|^2_{L^2(\R^n \times (0,\infty), z^{1-s}dxdz)} + \| \widetilde{D} U\|^2_{L^2(\R^n \times (0,\infty), z^{1-s}dxdz)} \,,
   \end{equation*}
   where $\widetilde{D}U =\big(\frac{\partial U}{\partial x^1}, \dotsc, \frac{\partial U}{\partial x^n}, \frac{\partial U}{\partial z} \big)$ denotes the Euclidean gradient in $\R^{n+1}$. This is a Hilbert space with the natural inner product that induces the norm above. It is a known fact that any $U \in \widetilde{H}^1(\R^n \times (0,\infty)) $ has a well defined trace in $L^2(\R^n)$ that we denote by $U(x, \cdot)$. 
\end{definition}

The following essential result by Caffarelli and Silvestre shows that fractional powers of the Laplacian on $\R^n$ can be realized as a Dirichlet-to-Neumann map via an extension problem.

\begin{theorem}[\cite{CafSi}]
Let $s \in (0,2)$ and $ u \in H^{s/2}(\R^n) \cap C^\infty(\R^n)$. Then, there is a unique solution $U = U(x,z) : \R^n \times [0, +\infty) \to \R$ among functions in $\widetilde{H}^1(\R^n \times (0,\infty))$ to the problem
\begin{equation}\label{caffextRn}
    \begin{cases}  \Delta_x U + \frac{\partial^2 U}{\partial z^2} +\frac{1-s}{z} \frac{\partial U}{\partial z} =0\, ,   & \mbox{on} \s \R^n \times (0, \infty) \\ U(x,0) = u(x) &   \mbox{for} \s x \in \R^n\, ,  \end{cases}
\end{equation}
and it satisfies
\begin{equation}\label{DtNeq}
    \lim_{z \to 0^+} z^{1-s} \frac{\partial U}{\partial z}(x,z) = - \beta_s^{-1} \, (-\Delta)^{s/2} u(x) \,,
\end{equation}
where $\Delta_x$ denotes the standard Laplacian on $\R^n$ and $\beta_s$ is a positive constant that depends only on $s$.
\end{theorem}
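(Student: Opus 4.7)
The plan is to construct the extension $U$ explicitly by Fourier analysis in the tangential variable $x$, which reduces the PDE to a one-parameter family of ODEs in $z$ that can be solved in closed form using modified Bessel functions. Taking the Fourier transform of \eqref{caffextRn} in $x$ yields, for each fixed $\xi\in\R^n$,
\begin{equation*}
\partial_z^2 \hat U(\xi,z) + \frac{1-s}{z}\partial_z \hat U(\xi,z) - |\xi|^2\, \hat U(\xi,z) = 0,\qquad \hat U(\xi,0)=\hat u(\xi),
\end{equation*}
and the unique solution that stays bounded as $z\to\infty$ is
\begin{equation*}
\hat U(\xi,z) = \varphi(|\xi|z)\,\hat u(\xi),\qquad \varphi(r) := \frac{2^{1-s/2}}{\Gamma(s/2)}\,r^{s/2}\,K_{s/2}(r),
\end{equation*}
where $K_{s/2}$ is the modified Bessel function of the second kind and the prefactor is chosen so that $\varphi(0^+)=1$, via the standard small-$r$ asymptotic $K_\nu(r)\sim \tfrac{1}{2}\Gamma(\nu)(r/2)^{-\nu}$. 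Inverting the Fourier transform gives a candidate $U$, which agrees with the usual Poisson integral representation $U(x,z) = c_{n,s}\int_{\R^n} z^s(|x-y|^2+z^2)^{-(n+s)/2}u(y)\,dy$.

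To see that $U\in\widetilde H^1(\R^n\times(0,\infty))$, I would apply Plancherel to compute
\begin{equation*}
\int_{\R^n\times(0,\infty)} z^{1-s}|\widetilde D U|^2\,dx\,dz = \int_{\R^n}|\hat u(\xi)|^2\int_0^\infty z^{1-s}\big(|\xi|^2\varphi(|\xi|z)^2+\varphi'(|\xi|z)^2\big)\,dz\,d\xi,
\end{equation*}
and after the substitution $r=|\xi|z$ the inner integral becomes $C_s|\xi|^s$ for an explicit finite positive $C_s$, so that $\|U\|^2_{\widetilde H^1}\le C_s\,[u]^2_{H^{s/2}(\R^n)} + \|u\|^2_{L^2}$, which is finite by assumption. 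Uniqueness follows from a direct energy argument: if $V\in\widetilde H^1$ solves the equation with zero trace, multiplying $\widetilde{\mathrm{div}}(z^{1-s}\widetilde\nabla V)=0$ against $V$ and integrating by parts (the $z=0$ boundary term vanishes because $V$ has zero trace and because $z^{1-s}$ is integrable near the boundary, which is precisely the compatibility built into $\widetilde H^1$) forces $\int z^{1-s}|\widetilde\nabla V|^2=0$ and hence $V\equiv 0$.

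The main technical step, and the one I expect to be the principal obstacle, is the Dirichlet-to-Neumann identity \eqref{DtNeq} and the precise identification of $\beta_s$. For this, I would expand $K_{s/2}$ to two orders near $r=0$: for $s\in(0,2)$ with $s\neq 1$,
\begin{equation*}
K_{s/2}(r) = \frac{\Gamma(s/2)}{2}\Big(\frac{2}{r}\Big)^{s/2} - \frac{\Gamma(-s/2)}{2}\Big(\frac{r}{2}\Big)^{s/2} + O(r^{2-s/2}),
\end{equation*}
with a logarithmic correction at $s=1$ that must be treated separately. This yields $\varphi(r) = 1 - \frac{\Gamma(-s/2)}{2^s\Gamma(s/2)}r^s + o(r^s)$ and $\varphi'(r) = -\frac{s\,\Gamma(-s/2)}{2^s\Gamma(s/2)}r^{s-1} + o(r^{s-1})$. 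Multiplying $\partial_z\hat U(\xi,z) = |\xi|\varphi'(|\xi|z)\hat u(\xi)$ by $z^{1-s}$ and letting $z\to 0^+$ gives
\begin{equation*}
z^{1-s}\partial_z\hat U(\xi,z)\longrightarrow -\frac{s\,\Gamma(-s/2)}{2^s\Gamma(s/2)}|\xi|^s\hat u(\xi),
\end{equation*}
which, after inverting the Fourier transform, is the pointwise statement \eqref{DtNeq} with an explicit value $\beta_s = \frac{2^s\Gamma(s/2)}{s|\Gamma(-s/2)|}$. The smoothness of $u$ makes $\hat u$ rapidly decaying, which is used together with dominated convergence to commute the $z\to 0^+$ limit with the inverse Fourier integral. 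The most delicate points are obtaining uniform-in-$\xi$ remainder bounds in the Bessel expansion and matching the normalization constant $\beta_s$ to the one implicit in the singular-integral definition \eqref{alphadef}; once the expansion is written with controlled errors, the rest is a routine verification.
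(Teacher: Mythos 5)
This theorem is stated in the paper as a cited result from Caffarelli--Silvestre \cite{CafSi}; the paper does not supply its own proof of the $\R^n$ case, but rather proves a manifold generalization (Theorem \ref{extmfd}) by a genuinely different route. Your Fourier--Bessel argument is essentially the first of the three proofs in \cite{CafSi}: reduce the PDE in the $z$-direction to the modified Bessel ODE via the Fourier transform, identify the decaying solution as $r^{s/2}K_{s/2}(r)$, compute the energy by Plancherel, and extract the Dirichlet-to-Neumann constant from the two-term small-argument expansion of $K_{s/2}$. The constant you land on, $\beta_s = \frac{2^s\Gamma(s/2)}{s|\Gamma(-s/2)|}$, matches the paper's $\beta_s = \frac{2^{s-1}\Gamma(s/2)}{\Gamma(1-s/2)}$ via the identity $\Gamma(1-s/2)=(-s/2)\Gamma(-s/2)$, so your normalization is consistent with \eqref{alphadef} and \eqref{betadef}.

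Two technical points. First, the small-$r$ expansion of $K_\nu$ should read $K_\nu(r) = \tfrac{1}{2}\Gamma(\nu)(r/2)^{-\nu} + \tfrac{1}{2}\Gamma(-\nu)(r/2)^{\nu} + O(r^{2-\nu})$ with a \emph{plus} sign; since $\Gamma(-s/2)<0$ this second term is negative, so $\varphi(r) = 1 + \frac{\Gamma(-s/2)}{2^s\Gamma(s/2)}r^s + o(r^s)$ is indeed decreasing. Your written expansion has the sign flipped, and you silently compensate when you switch to $|\Gamma(-s/2)|$ at the end; the final answer is right but the intermediate formulas are not. Second, the uniqueness step is sketched too briefly: integrating $\widetilde{\mathrm{div}}(z^{1-s}\widetilde\nabla V)=0$ against $V$ itself requires an approximation argument to justify the vanishing of the $z=0$ boundary term, since $V$ is not a test function. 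The paper handles this point carefully in Lemma \ref{uniq weak} by cutting off near $z=0$ with $\eta_k(z)$ and using the estimate \eqref{wvbsdefggf} to control the commutator; you should either invoke such a lemma or spell out that approximation.

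Finally, it is worth noting why the paper's own development uses a heat-kernel representation instead of your Fourier approach. The identity \eqref{bdfgshdfg}, $U(p,z) = \frac{z^s}{2^s\Gamma(s/2)}\int_0^\infty P_t u(p)\,e^{-z^2/4t}\,t^{-1-s/2}\,dt$, is a semigroup (Bochner) subordination formula that makes sense on any manifold, whereas the Fourier--Bessel computation relies on the additive structure of $\R^n$. Your proof is perfectly valid for the Euclidean statement, but if you want to see how the paper extends the result to closed Riemannian manifolds you should understand why the authors replace the Fourier multiplier $\varphi(|\xi|z)$ by the Stinga-type integral against the heat semigroup --- that substitution is the whole point of the paper's Section \ref{ext section}.
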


In \cite{CafSi}, three different proofs of this fact are presented, but each of these proofs relies on some additive structure of the base space. To prove that the extension procedure produces the fractional power of the Laplacian also on a Riemannian manifold, which is the setting we are interested in, one has to rely on different ideas. It was proved by P.R. Stinga in \cite{Stinga} that the unique solution to \eqref{caffextRn} verifying \eqref{DtNeq} admits the explicit representation
\begin{equation}\label{bdfgshdfg}
    U(p,z)= \frac{z^{s}}{2^s\Gamma(s/2)} \int_{0}^{\infty} P_t u (p) \,  e^{-\frac{z^2}{4t}}\frac{dt}{t^{1+s/2}} \, , 
\end{equation}
which expresses $U$ in terms of the solution to the heat equation $P_t u$ (and thus makes sense also on a manifold). The proof of this fact does not strongly rely on the additive structure of $\R^n$ and will be proved now also on closed Riemannian manifolds.

\vsp
First, let us define the weighted Sobolev spaces for the extension on compact manifolds.

\begin{definition}\label{weighSobspacemfd}
    We define the weighted Sobolev space
   \begin{equation*}
       \widetilde{H}^1 (\widetilde M) = \widetilde{H}^1( M \times (0,\infty)) 
   \end{equation*}
   as the completion of $C_c^\infty( M \times [0,\infty))$ with the norm
   \begin{equation*}
       \| U\|^2_{\widetilde H^1} := \| TU \|^2_{L^2(M)} + \| \widetilde \nabla U\|^2_{L^2(\widetilde M, z^{1-s}dVdz)} \,,
   \end{equation*}
   where $TU = U(\cdot, 0)$ is the trace of $U$ and $\widetilde \nabla U =(\nabla U, U_z)$ denotes the gradient in $M\times (0,+\infty)$. 
  This is a Hilbert space with the natural inner product that induces the norm above. Moreover, basically by definition, any $U \in \widetilde{H}^1 (\widetilde M) $ leaves a trace in $L^2(M\times \{0\})$. 
\end{definition}

\begin{theorem}\label{extmfd}
Let $(M^n,g)$ be a closed Riemannian manifold, let $s\in (0,2)$ and $u:M\to\R$ be smooth. Consider the product manifold $\widetilde{M}=M\times (0,+\infty)$ endowed with the natural product metric\footnote{That is, the metric defined by $\widetilde g \big((\xi_1, z_1), (\xi_2,z_2)\big)= g(\xi_1, \xi_2) + z_1z_2$, and where $\widetilde{\rm div}$ and $\widetilde \nabla$ denote the divergence and Riemannian gradient with respect to this product metric respectively.}. Then, there is a unique solution $U:M \times (0, \infty) \to \R $ among functions in $\widetilde H^1(\widetilde M )$ to
\begin{equation}\label{caffextMfd}
    \begin{cases} \widetilde{ {\rm div}}(z^{1-s} \widetilde \nabla U) = 0  &   \mbox{in} \s \widetilde{M} \,,  \\ U(p,0) = u(p) &  \mbox{for} \s p \in \partial \widetilde{M} = M\, , \end{cases}
\end{equation}
given by \eqref{bdfgshdfg}, and it satisfies
\begin{equation}\label{eneqal}
    [u]^2_{H^{{s/2}}(M)} = 2\beta_s \int_{\widetilde{M}} |\widetilde \nabla U|^2 z^{1-s} \, dVdz \,,
\end{equation}
where $[u]^2_{H^{{s/2}}(M)}$ is defined through \eqref{wethiowhoihw} and
\begin{equation}\label{betadef}
    \beta_s = \frac{2^{s-1} \Gamma(s/2)}{\Gamma(1-s/2)}\, .
\end{equation}
Moreover,
\begin{equation}\label{lapextmfd}
    \lim_{z \to 0^+} z^{1-s} \frac{\partial U}{\partial z}(p,z) =  -\beta_s^{-1} (-\Delta)^{s/2} u(p) \,, 
\end{equation}
where the fractional Laplacian on the right-hand side is defined by either \eqref{boclap} or \eqref{singintlap}. 
\end{theorem}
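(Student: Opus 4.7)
The plan is to adapt the Stinga--Torrea \cite{Stinga} approach to the closed-manifold setting; on inspection this approach uses only the abstract properties of the heat semigroup $P_t$ on $(M,g)$ (namely $\partial_tP_tu = \Delta_gP_tu$, mass preservation, and $P_tu \to \bar u$ as $t\to\infty$), so it transfers with only cosmetic changes. I would proceed in four steps: (i) verify by direct differentiation that the formula \eqref{bdfgshdfg} defines a $\widetilde H^1$-function solving the PDE with trace $u$; (ii) compute the Dirichlet-to-Neumann limit \eqref{lapextmfd}; (iii) deduce \eqref{eneqal} by integration by parts in $z$, using (ii); (iv) deduce uniqueness from the same energy argument applied to the difference of two solutions.

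Step (i) is essentially algebraic. Writing $U(p,z) = C_s z^s I_1(p,z)$ with $C_s = 1/(2^s\Gamma(s/2))$ and $I_j(p,z) = \int_0^\infty P_tu(p)\, e^{-z^2/4t}\, dt/t^{j+s/2}$, the computation of $\Delta_MU$ uses $\Delta_M P_tu = \partial_tP_tu$ and integration by parts in $t$ (with vanishing boundary terms thanks to the exponential decay of $e^{-z^2/4t}$ at $t=0^+$), reducing $\Delta_MU$ to a linear combination of $I_2$ and $I_3$. Differentiating in $z$ twice via $\partial_z I_j = -(z/2)I_{j+1}$, one finds that $\partial_{zz}U + \frac{1-s}{z}\partial_zU$ exactly cancels $\Delta_MU$ term by term (the coefficient of $I_1$ vanishes identically and those of $I_2, I_3$ have opposite sign). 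The trace condition $U(p,0^+) = u(p)$ is then seen via the substitution $\tau = z^2/(4t)$, which converts $C_sz^s\int_0^\infty e^{-z^2/4t}\, dt/t^{1+s/2}$ into the constant $1$, expressing $U(p,z)$ as an approximate identity in $\tau$-coordinates acting on $P_tu(p)$.

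For step (ii), the crucial feature is a cancellation: expanding $z^{1-s}\partial_zU = C_s[sI_1 - (z^2/2)I_2]$ and splitting $P_tu = u(p) + (P_tu - u(p))$, the contributions proportional to $u(p)$ add up to zero by the explicit identity $\int_0^\infty e^{-z^2/4t}\, dt/t^{j+s/2} = 2^{2j+s-2}\Gamma(j+s/2-1)/z^{2j+s-2}$; the remaining integral against $P_tu - u(p)$ converges, as $z\to 0^+$, to $-\beta_s^{-1}(-\Delta)^{s/2}u(p)$ by dominated convergence and the spectral definition \eqref{boclap} (the constant coming from $s\Gamma(-s/2) = -2\Gamma(1-s/2)$), while the lower-order piece is $O(z^{2-s})$ by splitting at $t = z^2/4$ and using $|P_tu - u| \le C\min(t,1)$. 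Once \eqref{lapextmfd} is established, step (iii) follows by integrating $U\,\widetilde{{\rm div}}(z^{1-s}\widetilde\nabla U) = 0$ by parts on $M\times(\varepsilon,R)$. The boundary term at $z = R$ vanishes as $R\to\infty$ thanks to the exponential decay $w_k(z)\sim z^{(s-1)/2}e^{-\sqrt{\lambda_k}z}$ of each Bessel mode (obtained via the spectral decomposition $u = \sum\hat u_k\phi_k$) and the spectral gap $\lambda_1>0$; the one at $z = \varepsilon$ converges to $\beta_s^{-1}\int_Mu(-\Delta)^{s/2}u\, dV$, which by symmetrization in \eqref{singintlap} equals $\tfrac{1}{2\beta_s}[u]^2_{H^{s/2}(M)}$. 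Step (iv) is then the application of the same identity to $V = U_1 - U_2$, which has zero trace and solves the PDE, forcing $V \equiv 0$.

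The main obstacle is the rigorous treatment of the boundary manipulations. Specifically, one must check that the limit in \eqref{lapextmfd} is uniform in $p\in M$ so as to commute with $\int_M$ (for which the smoothness of $u$ and the equicontinuity estimates on $P_tu - u$ suffice), that $U$ decays fast enough as $z\to\infty$ to kill the boundary term at infinity (via the spectral/Bessel representation), and that the product $\varepsilon^{1-s}U\partial_zU|_{z=\varepsilon}$ makes sense for a general $\widetilde H^1$-solution, which requires a trace-type inequality combined with a density argument (e.g., approximating $V$ by $\chi(z/\delta)V$ followed by mollification, with the cutoff killing the boundary contribution at $z=0$). These are standard but technical; the core analytic content of the theorem lies in the cancellations of steps (i)--(ii).
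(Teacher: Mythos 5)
Your proposal is correct and follows essentially the same structure as the paper's proof: verify the explicit Poisson-type formula solves the weighted PDE with the right trace, compute the weighted Neumann derivative to get the Dirichlet-to-Neumann identity, and derive the energy identity by integration by parts. The only differences are cosmetic: the paper verifies the PDE in weak form by testing against the adjoint operator rather than by classical differentiation, computes the Neumann limit by applying L'H\^opital's rule to the incremental quotient $(U(\cdot,z)-u)/z^s$ rather than by expanding $z^{1-s}\partial_z U$ and bounding the lower-order piece by $O(z^{2-s})$, and obtains uniqueness at the outset from strict convexity of the Dirichlet energy rather than from the energy identity applied to a difference of solutions (the latter argument appears in the paper as Lemma~\ref{uniq weak}, invoked only for the general $H^{s/2}$ case in Proposition~\ref{afdsgsdgh}).
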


\begin{proof}
Note that functions in $\widetilde H^1(\widetilde M )$ leave a well-defined trace (that is, there exists a continuous trace operator with respect to the norm on $\widetilde H^1(\widetilde M )$) on $M\times \{0\}$. Then, the fact that a solution among functions in $\widetilde H^1(\widetilde M )$ exists follows by direct minimization of the associated energy $v \mapsto\int_{\widetilde{M}} |\widetilde \nabla v|^2z^{1-s} dVdz$ over $\widetilde H^1(\widetilde M)$. Since the energy is convex, the solution is also unique.

\vsp
From here we divide the proof in two steps. 

\vsp
\textbf{Step 1.} We show that the (unique) solution $U \in \widetilde H^1(\widetilde M)$ to \eqref{caffextMfd} is given by \eqref{bdfgshdfg}.

\vsp\noindent
Making the identification $T(M \times (0,+\infty) ) \simeq TM \times (0,+\infty)  $ we have
\begin{align*}
    \widetilde{ {\rm div}}(z^{1-s} \widetilde \nabla U) &= \textnormal{div}_g(z^{1-s} \nabla U) + \frac{d}{dz}(z^{1-s} U_z) \\ &= z^{1-s} \Delta U + (1-s)z^{-s} U_z + z^{1-s}U_{zz} \\ &= z^{1-s}( \Delta U + \frac{1-s}{z}U_z + U_{zz}) \, . 
\end{align*}
Thus, in order to prove that $U$ solves $\widetilde{ {\rm div}}(z^{1-s} \widetilde \nabla U) = 0$ we show that $U$ (weakly) solves
\begin{equation}\label{qwrtqrwerq}
    \mathcal{L}(U) := \Delta U + \frac{1-s}{z}U_z + U_{zz} =0 \,.
\end{equation}
Define 
\begin{equation}\label{defG}
    G(z,t) := \frac{1}{2^s \Gamma(s/2)} \frac{ z^{s}e^{-\frac{z^2}{4t}}}{t^{1+s/2}} \, ,
\end{equation}
so that \eqref{bdfgshdfg} rewrites simply as 
\begin{equation}\label{zzzzzz5}
    U(\cdot,z)=\int_{0}^{\infty} (P_t u)G(z,t)\,dt \, .
\end{equation}
It can be easily checked that $G$ satisfies 
\begin{equation}\label{gkerzero}
    -G_t+\frac{1-s}{z}G_z+G_{zz} =0\,,
\end{equation}
and also
\begin{equation*}
    \lim_{t \to 0^+} \sup_{[z_1,z_2]} G(\cdot,t) = 0 \,, \quad \lim_{t \to \infty} \sup_{[z_1,z_2]} G(\cdot,t) = 0 \,,
\end{equation*}
for every $[z_1,z_2]\subset (0,+\infty)$. Moreover, from the definition of $G$ and the fact that $u$ is smooth we se that the integral in the right-hand side of \eqref{zzzzzz5} is absolutely convergent in $\widetilde H^1(\widetilde M)$. Hence $U \in \widetilde H^1(\widetilde M)$. 

\vsp
Now we check that $U$ weakly solves the desired problem. Let $\varphi \in C_c^\infty(\widetilde M)$, $K:= {\rm supp} (\varphi)$ and $z_1, z_2 \in (0,+\infty)$ such that $K\subset \subset M\times [z_1, z_2]$. Let also 
\begin{equation*}
    \mathcal{L}^*(\varphi) := \Delta \varphi +\partial_z \left(\frac{1-s}{z} \varphi \right) +\varphi_{zz} \,.
\end{equation*}
This is the formal adjoint of the operator in \eqref{qwrtqrwerq}. Clearly $\mathcal{L}^*(\varphi) $ still has compact support in $K\subset \subset \widetilde M$ and is smooth. Then
\begin{equation*}
    \int_{\widetilde M} U \mathcal{L}^*(\varphi) \, dV dz = \int_{\widetilde M} \int_{0}^\infty (P_t u) G(z,t) \mathcal{L}^*(\varphi) \, dt  dV dz \,,
\end{equation*}
and we claim that this integral is absolutely convergent. Indeed
\begin{align*}
    \int_{\widetilde M} \int_{0}^\infty & |(P_t u) G(z,t) \mathcal{L}^*(\varphi)| \, dt \, dV dz  \le \| \mathcal{L}^*(\varphi)\|_{L^\infty} \int_0^\infty \int_K |P_t u ||G(z,t)| \, dV dz  dt\\ &\le C \left( \int_K |P_t u |^2|G(z,t)|^2 z^{1-s} \, dV dz \right)^{\frac{1}{2}} \left( \int_K \frac{1}{z^{1-s}} \, dV dz \right)^{\frac{1}{2}} <+\infty \,,
\end{align*}
since the integral in \eqref{zzzzzz5} is absolutely convergent in $\widetilde H^1(\widetilde M)$. Hence we can exchange the order of integration and we get, integrating by parts in space many times
\begin{align*}
     \int_{\widetilde M} U \mathcal{L}^*(\varphi) \, dV dz & =  \int_{0}^\infty \left( \int_{K} (P_t u)  G(z,t)  \mathcal{L}^*(\varphi) \, dV dz  \right) dt  \\ &= \int_{0}^\infty  \int_{K}  \left( G(z,t) \Delta (P_t u)  + (P_t u) \frac{1-s}{z} G_z(z,t) + (P_t u) G_{zz}(z,t) \right) \varphi \, dVdz   dt \,.
\end{align*}
Since $P_t u $ is smooth and solves the heat equation, the first term equals to
\begin{align*}
    \int_{0}^\infty   \int_{K} G(z,t) \Delta (P_t u)  \, dV dzdt & =  \int_{0}^\infty  \int_{K} G(z,t) \partial_t(P_t u) \, dV dzdt =  \int_{K} \int_{0}^\infty  G(z,t) \partial_t(P_t u) \, dtdVdz \\ & = \int_K (P_t u) G(z,t) \, dVdz \,\bigg|_{0^+}^\infty - \int_{K} \int_{0}^\infty  (P_t u ) G_t(z,t) \, dtdVdz \,.
\end{align*}
The boundary terms vanish since
\begin{align*}
   \left| \int_K (P_t u) G(z,t)  \, dV dz \right| & \le \left( \int_M |P_t u | \, dV \right)\left( \int_{z_1}^{z_2} |G(z,t)| \, dz \right) \\ & \le |M|^{1/2} \| u \|_{L^2(M)}|z_2-z_1| \sup_{[z_1, z_2]} G(\cdot, t) \to 0 \,,
\end{align*}
both as $t\to \infty$ and as $t\to 0^+$. Hence, putting all together and using \eqref{gkerzero}
\begin{align*}
    \int_{\widetilde M} \mathcal{L}(U) \varphi \, dVdz = \int_{\widetilde M} U \mathcal{L}^*(\varphi) \, dV_p dz = \int_K \int_0^\infty (P_t u) \left( -G_t +\frac{1-s}{z} G_z + G_zz \right) \varphi \, dVdzdt = 0 . 
\end{align*}
Hence $U$ given by \eqref{bdfgshdfg} is a weak solution of \eqref{qwrtqrwerq}, and by standard elliptic regularity it is also a classical solution. 

\vsp
Moreover, the fact that $U(\cdot, 0^+)=u$ follows by the explicit formula \eqref{bdfgshdfg}. Indeed, by a simple change of variable in the integral, we have 
\begin{equation*}
    U(p,z) = \frac{1}{\Gamma(s/2)} \int_0^\infty (P_{z^2/4r} u)(p) \frac{e^{-r}}{r^{1-s}} dr \,,
\end{equation*}
and taking $z\to 0^+$ in this formula gives $U(\cdot, 0^+)=u$. This concludes Step 1.

\vsp
\textbf{Step 2.} Proof of \eqref{lapextmfd}. 

\vsp \noindent
Note that by the representation formula we just proved for $U$ we have
\begin{equation*}
   \tau^s_z U (p) := \frac{U(p,z)-u(p)}{z^{s}} = \frac{1}{2^s\Gamma(s/2)} \int_{0}^{\infty} (P_tu(p)-u(p))e^{-\frac{z^2}{4t}} \frac{dt}{t^{1+s/2}} \, .
\end{equation*}
Moreover, by L'Hopital's rule
\begin{equation*}
    \lim_{z\to 0^+} \tau^s_z U = \lim_{z \to 0^+} s^{-1} z^{1-s} \frac{\partial U}{\partial z} \,.
\end{equation*}
Writing $P_t u $ as the convolution against the heat kernel $H_M$ of $M$ we get
\begin{equation*}
   \tau^s_z U (p) = \frac{1}{2^s\Gamma(s/2)} \int_{0}^{\infty} \int_M H_M(p,q,t)(u(q)-u(p)) \frac{e^{-\frac{z^2}{4t}}}{t^{1+s/2}} dV_q dt \, .
\end{equation*}
Since $u$ is smooth, and since $K_s^\ep(p,q)$ is non-singular for $\ep>0$ on the diagonal $\{p=q\}$ (recall \eqref{app ker K}) there holds
\begin{align*}
   \int_M  \frac{s/2}{\Gamma(1-s/2)}\int_{0}^{\infty}  H_M(p,q,t)|u(q)-u(p)| \frac{e^{-\frac{z^2}{4t}}}{t^{1+s/2}}  dt dV_q  = \int_M  |u(q)-u(p)| K_s^z(p,q) \, dV_q  <+\infty \,.
\end{align*}
Thus the integral in $\tau^s_z U (p)$ is absolutely convergent, and we can exchange the order of integration to get
\begin{equation*}
   \tau^s_z U (p) =  \frac{\Gamma(1-s/2)}{s 2^{s-1}\Gamma(s/2)}  \int_M (u(q)-u(p)) K_s^z(p,q) \, dV_q \,.
\end{equation*}
From here, by the very definition of the principal value 
\begin{align*}
      \lim_{z \to 0^+} z^{1-s} \frac{\partial U}{\partial z}  =   \lim_{z\to 0^+} s \cdot \tau^s_z U & = \lim_{z\to 0^+} \beta_s^{-1} \int_M (u(q)-u) K_s^z(p,q) \, dV_q \\ &= - \beta_s^{-1} \left(p.v. \int_M (u-u(q)) K_s(p,q) \, dV_q \right) \\ & =  -\beta_s^{-1} (-\Delta)^{s/2} u \,.
\end{align*}
This finishes Step 2.

\vsp
Before proving \eqref{eneqal} we prove also that the convergence in \eqref{lapextmfd} holds in $L^r(M)$ for every $r\in [1, +\infty)$. Since we have pointwise convergence, we show that the sequence is dominated. In particular, we prove that for $z\le 1$ there holds
\begin{equation}\label{qwrbhfqwekgv}
    z^{1-s} | U_z(\cdot, z)| \le C \,,
\end{equation}
where $C$ depends on $\|\Delta u\|_{L^\infty}$, $\|u\|_{L^\infty}$ and $s$. The proof is a standard barrier argument very similar to the proof of Lemma \ref{linf-grad}. Consider $b(p,z):=u(p)-C(z^2-2z^s)$, for $C>0$ that will be chosen soon. Since 
\begin{equation*}
    \widetilde{ {\rm div}}(z^{1-s} \widetilde \nabla b) = z^{1-s}(\Delta u - (4-2s)C) \,,
\end{equation*}
taking $C=\tfrac{1}{4-2s}\|\Delta u\|_{L^\infty} + 2\|u\|_{L^\infty}$, we see that $b$ is a supersolution of \eqref{caffextMfd} and that $U \le b$ on $M\times \{0, 1\}$. Hence $b$ is barrier for $U$, and by the maximum principle for $z\le 1$ we have 
\begin{equation*}
    U(\cdot,z) \le  u-\left( \frac{1}{4-2s}\|\Delta u\|_{L^\infty} +2 \|u\|_{L^\infty} \right)(z^2-2z^s) \le u+ z^s\left( \frac{1}{2-s}\|\Delta u\|_{L^\infty} + 4 \|u\|_{L^\infty} \right)  ,
\end{equation*}
and this implies 
\begin{equation*}
    \lim_{z \to 0^+} z^{1-s} U_z = \lim_{z\to 0^+} s \frac{U(\cdot, z)-u}{z^s} \le \frac{s}{2-s}\|\Delta u\|_{L^\infty} + 4s \|u\|_{L^\infty}\,.
\end{equation*}
Completely analogously using $-b$ as a barrier for $-U$, one also gets the reverse inequality. Moreover, note that the function $V:=z^{1-s}U_z$ solves $-(\Delta V + V_{zz})+\tfrac{1-s}{z}V_z =0 $, thus by the maximum principle 
\begin{equation*}
    \sup_{M\times (0,1] } |V| \le \max\left\{ \sup_{M} V(\cdot, 0^+) , \sup_{M} V(\cdot,1) \right \} \le \max\left\{ C( \|\Delta u\|_{L^\infty} , \|u\|_{L^\infty}, s) , \sup_{M} V(\cdot,1) \right \} .
\end{equation*}
But since $V(\cdot,1)= U_z(\cdot,1)$ we have by standard interior gradient estimates 
\begin{equation*}
    |U_z(p,1)| \le C \sup_{\widetilde B_{1/10}(p,1)} |U| \le C\|u\|_{L^\infty} \,,
\end{equation*}
for some absolute constant $C>0$ independent of $u$. Putting everything together 
\begin{equation*}
    \sup_{M\times (0,1] } |V| = \sup_{M\times (0,1] } z^{1-s}|U_z| \le C(\|\Delta u\|_{L^\infty} , \|u\|_{L^\infty}, s) \,,
\end{equation*}
and this concludes the proof of \eqref{qwrbhfqwekgv}.

\vsp
We're left with proving \eqref{eneqal}. Integrating by parts, for every $ \delta >0$ we find that
\begin{equation*}
    \int_{M\times [\delta, \infty)} |\widetilde \nabla U|^2 z^{1-s} \, dVdz = \beta_s^{-1}\int_{M} U(\cdot, \delta) \delta^{1-s}U_z(\cdot, \delta)  \, dV \,.
\end{equation*}
Letting now $\delta\to 0^+$, by \eqref{lapextmfd}, \eqref{qwrbhfqwekgv} and dominated convergence on the right-hand side
\begin{equation*}
    \int_{\widetilde M} |\widetilde \nabla U|^2 z^{1-s} \, dVdz = \beta_s^{-1} \int_M u(-\Delta)^{s/2} u \, dV = \frac{\beta_s^{-1}}{2} [u]^2_{H^{s/2}(M)} \,.
\end{equation*}
This concludes the proof.

% {\color{green} Conto mio nuovo per sistemare quello che ho scritto:}
% \begin{align*}
%     0&=\int_{\widetilde M} U\textnormal{div}(z^{1-s}\nabla U)\\
%     &=\int_{\widetilde M} U (z^{1-s}\Delta_M U(x,z)+z^{1-s}\partial_{zz} U(x,z)+(1-s)z^{-s}\partial_z U(x,z))\\
%     &=
% \end{align*}

% {\color{green} Conto mio nuovo per sistemare quello che ho scritto:}
% \begin{align*}
%     0&=\int_{\widetilde M} U\textnormal{div}(z^{1-s}\nabla U)\\
%     &=\int U \textnormal{div}(z^{1-s}\nabla \frac{z^s}{2^s\Gamma(s/2)} \int_{0}^{\infty} \int_M H_M(x,y,t)u(y)e^{-\frac{z^2}{4t}} \frac{dt}{t^{1+s/2}})\\
%     &=\int U \textnormal{div}(z^{1-s}\Big(s \frac{z^{s-1}}{2^s\Gamma(s/2)} \nabla(z)\int_{0}^{\infty} \int_M H_M(x,y,t)u(y)e^{-\frac{z^2}{4t}} \frac{dt}{t^{1+s/2}}+\frac{z^s}{2^s\Gamma(s/2)} \int_{0}^{\infty} \nabla \int_M H_M(x,y,t)u(y)e^{-\frac{z^2}{4t}} \frac{dt}{t^{1+s/2}}\Big)
% \end{align*}
\end{proof}
We have just used the following fact, which is proved with a one-line computation using \eqref{singintlap}.
\begin{proposition}
    For a smooth function, one has that \begin{equation*}
    [u]_{H^{s/2}(M)}^2 = 2\int_{M} u(-\Delta)^{s/2} u \, dV \, .
\end{equation*}
\end{proposition}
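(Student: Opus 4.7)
The plan is to exploit the symmetry $K_s(p,q)=K_s(q,p)$ (which is immediate from the definition \eqref{kerneleq} together with the symmetry $H_M(p,q,t)=H_M(q,p,t)$ of the heat kernel) and perform the standard symmetrization trick that turns a quadratic Dirichlet-type form into the pairing of $u$ against its fractional Laplacian. The algebraic identity driving the computation is
\begin{equation*}
(u(p)-u(q))^2 = (u(p)-u(q))u(p) - (u(p)-u(q))u(q),
\end{equation*}
so that, swapping the names of the variables in the second term and using the symmetry of $K_s$, both pieces coincide.

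To make this rigorous without running into principal-value issues, I would first carry out the computation with the regularised kernel $K_s^\ep$ defined in \eqref{app ker K}. For each $\ep>0$ the kernel $K_s^\ep$ is symmetric, bounded, and integrable on $M\times M$, so Fubini applies without caveat and
\begin{align*}
\iint_{M\times M} (u(p)-u(q))^2 K_s^\ep(p,q)\, dV_p\, dV_q
&= 2\iint_{M\times M} u(p)(u(p)-u(q)) K_s^\ep(p,q)\, dV_p\, dV_q\\
&= 2\int_M u(p)\left(\int_M (u(p)-u(q))K_s^\ep(p,q)\, dV_q\right) dV_p.
\end{align*}
The passage to the limit $\ep\to 0^+$ on the left is handled by monotone convergence (since $K_s^\ep\uparrow K_s$ as $\ep\downarrow 0$ and the integrand $(u(p)-u(q))^2$ is nonnegative, and finiteness of $[u]^2_{H^{s/2}(M)}$ for smooth $u$ on a closed manifold follows from the kernel estimates in Theorem \ref{prop:kern1}). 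On the right, smoothness of $u$ combined with Theorem \ref{prop:kern1}, (ii) of the previous proposition on the equivalence of definitions, together with the estimate that ensured convergence of the principal value there, shows that $\int_M(u(p)-u(q))K_s^\ep(p,q)\, dV_q$ converges to $(-\Delta)^{s/2}u(p)$ uniformly in $p\in M$ (and is uniformly bounded), so dominated convergence identifies the right-hand side with $2\int_M u\,(-\Delta)^{s/2}u\, dV$.

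The only subtlety I expect is justifying the passage to the limit on the right-hand side, and this is precisely what is already contained in the earlier proof of the equivalence between the spectral and singular-integral definitions (via the intermediate identity \eqref{qhwerfgwerf}). Hence the whole argument reduces to the one-line symmetrization above together with this already-established convergence.
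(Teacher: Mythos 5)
Your proof is correct, and it is more careful than the paper's own justification. The paper merely states that the identity ``is proved with a one-line computation using \eqref{singintlap}'', i.e.\ it performs the symmetrization $(u(p)-u(q))^2 K_s \mapsto 2u(p)(u(p)-u(q))K_s$ and identifies the inner integral with the principal value, without spelling out why Fubini and the passage from a singular kernel to a principal value are legitimate. You instead run the entire computation with the regularized kernel $K_s^\ep$, where symmetry, boundedness and Fubini hold unconditionally, and then pass to the limit on both sides (monotone convergence on the left, dominated convergence on the right). This is essentially the same regularization-and-symmetrization argument that the paper uses in Step~1 of Proposition~\ref{afdsgsdgh} to prove \eqref{sobint}\,$=$\,\eqref{sobspectr}, so you have effectively recognized that the ``one-line'' claim hides exactly the mechanism already established there. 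One small remark: for the right-hand side you invoke uniform convergence of $\int_M(u(p)-u(q))K_s^\ep(p,q)\,dV_q$ to $(-\Delta)^{s/2}u(p)$, but dominated convergence only requires pointwise convergence (which is the definition of the principal value) plus a uniform-in-$\ep$, integrable-in-$p$ bound; the latter follows for smooth $u$ on a closed $M$ from the kernel estimates of Theorem~\ref{prop:kern1}, just as you indicate. Both routes are valid; yours is more complete.
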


\begin{remark} Let us briefly comment on the role played by the energy space for the uniqueness of the extension in Theorem \ref{extmfd}. One can note that, for every $C>0$, the function $V=Cz^s$ on $\widetilde M$ is a solution of $\widetilde{ {\rm div}}(z^{1-s} \widetilde \nabla V) = 0$ with zero trace. Hence, uniqueness outside the energy space $\widetilde H^1(\widetilde M)$ does not hold in general, and every uniqueness result that does not rely on being in $\widetilde H^1(\widetilde M)$ must, in particular, rule out this phenomenon. 

\vsp
A simple uniqueness result that does not rely on the energy space is the following. Let $U$ solve $\widetilde{ {\rm div}}(z^{1-s} \widetilde \nabla U) = 0$, with $U(\cdot, 0)=0$ be such that
\begin{equation*}
    \limsup_{z\to \infty} \sup_{p\in M} |U(p,z)| z^{-s}=0 \,.
\end{equation*}
That is, $U$ grows at infinity slower than any multiple of $z^s$. Then $U\equiv 0$. 

\vsp 
This can be proved using $Cz^s$ as a barrier. Indeed, this is a solution of $\widetilde{ {\rm div}}(z^{1-s} \widetilde \nabla U) = 0$, with $U(\cdot, 0)=0$. By the growth hypothesis on $U$ we have that there exists $C$ large (depending on $U$) such that $Cz^s \ge 10 U$ on $\widetilde M$. In particular, with this $C$, we have $U < Cz^s$. 

\vsp
Now start decreasing $C$. The graph of $Cz^s$ can never touch $U$ from above since this would contradict the maximum principle in the interior. Hence $U < Cz^s$ for every $C>0$ and sending $C\to 0^+$ gives $U\le 0$. 

\vsp
By the same argument from below one also gets $U\ge 0$. Hence $U\equiv 0$ and this concludes the proof.    
\end{remark}

\section{The fractional Sobolev energy}
\subsection{Several definitions and their equivalence}
We recall the definition for the fractional Sobolev seminorm that we have used in the previous section, and we define the associated functional space.
\begin{definition}
    We define the fractional Sobolev seminorm $[u]_{H^{s/2}(M)}$ for $s \in (0,2)$ as  
 \begin{equation}\label{sobint}
     [u]^2_{H^{{s/2}}(M)} =\iint_{M\times M} (u(p)-u(q))^2 K_s(p,q) \, dV_p dV_q \, .
\end{equation} 

The associated functional space $H^{s/2}(M)$ is
\begin{equation}\label{sobspace}
H^{s/2}(M)=\{u\in L^2(M)  \text{ : } [u]^2_{H^{s/2}(M)}<\infty\} \,,
\end{equation}
and it is called the \textit{fractional Sobolev space of order $s/2$}. This is a Hilbert space with norm given by 
\begin{equation*}
    \|u\|_{H^{{s/2}}(M)}^2=\|u\|_{L^2(M)}^2+[u]^2_{H^{{s/2}}(M)}\, .
\end{equation*}
\end{definition}

The fractional Sobolev seminorm can also be expressed using spectral or extension approaches: 

\begin{proposition}\label{afdsgsdgh}
Let $u\in H^{s/2}(M)$. Then, the fractional Sobolev seminorm \eqref{sobint} is equal to
\begin{equation}\label{sobspectr}
    [u]^2_{H^{{s/2}}(M)} = 2\sum_{k=1}^\infty \lambda_k^{s/2} \lp u, \phi_k\rp_{L^2(M)} ^2
\end{equation}
and
\begin{equation}\label{sobext}
   [u]^2_{H^{{s/2}}(M)} = \inf_{v \in \widetilde H^1(\widetilde M)} \left\{ 2\beta_s \int_{\widetilde{M}} |\widetilde \nabla v|^2 z^{1-s} \, dVdz \, : \, v(\cdot,0)=u(\cdot) \mbox{ in } L^2(M) \right\}.
\end{equation}
Moreover, the conclusions of Theorem \ref{extmfd} also hold for $u$ (with the exception of \eqref{lapextmfd}), and the infimum in \eqref{sobext} is attained by the unique $U \in \widetilde H^1(\widetilde M) $ given by Theorem \ref{extmfd}. In particular, we also have that
\begin{equation}\label{asdfasd}
    [u]^2_{H^{s/2}(M)}= 2\beta_s \int_{\widetilde{M}} |\widetilde{\nabla} U|^2 z^{1-s} \, dVdz \,,
\end{equation}
where $\beta_s$ is the constant defined in \eqref{betadef}. 
\end{proposition}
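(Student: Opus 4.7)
The plan is to prove the spectral identity \eqref{sobspectr} by a direct Tonelli computation from the singular-integral definition, and then obtain the extension characterization by approximating $u$ with its spectral partial sums, for which Theorem \ref{extmfd} applies mode by mode.

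\textbf{Spectral identity.} Substitute the heat-kernel representation \eqref{kerneleq} of $K_s$ into \eqref{sobint}. Since every quantity is non-negative, Tonelli's theorem allows all exchanges. Using the stochastic completeness identity $\int_M H_M(p,q,t)\,dV_q\equiv 1$, the inner double integral evaluates to $2(\|u\|^2_{L^2(M)}-\langle u,P_tu\rangle_{L^2(M)})$. Expanding $u=\sum_{k\ge 1}\hat u_k\phi_k$ in $L^2(M)$ and using $P_t\phi_k=e^{-\lambda_k t}\phi_k$ gives $\|u\|^2_{L^2(M)}-\langle u,P_tu\rangle_{L^2(M)}=\sum_k(1-e^{-\lambda_k t})\hat u_k^2$. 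Exchanging the $k$-sum with the $t$-integral (Tonelli again) and invoking the numerical identity \eqref{numeric1} in the form
\begin{equation*}
\int_0^\infty (1-e^{-\lambda t})\,t^{-1-s/2}\,dt=\frac{\Gamma(1-s/2)}{s/2}\,\lambda^{s/2},
\end{equation*}
one arrives at $[u]^2_{H^{s/2}(M)}=2\sum_{k\ge 1}\lambda_k^{s/2}\hat u_k^2$, which is \eqref{sobspectr}; in particular both sides are simultaneously finite or infinite, so the space $H^{s/2}(M)$ defined by \eqref{sobspace} coincides with the spectral one.

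\textbf{Extension identity.} For $u\in H^{s/2}(M)$ set $u_N:=\sum_{k\le N}\hat u_k\phi_k\in C^\infty(M)$. Theorem \ref{extmfd} provides for each $u_N$ an extension $U_N\in\widetilde H^1(\widetilde M)$ via \eqref{bdfgshdfg}, solving \eqref{caffextMfd} with trace $u_N$ and satisfying
\begin{equation*}
2\beta_s\int_{\widetilde M}z^{1-s}|\widetilde\nabla U_N|^2\,dVdz=[u_N]^2_{H^{s/2}(M)}=2\sum_{k\le N}\lambda_k^{s/2}\hat u_k^2.
\end{equation*}
Applying the same identity to the smooth differences $u_N-u_M$, the spectral formula just established shows that $\{U_N\}$ is Cauchy in $\widetilde H^1(\widetilde M)$; its limit $U$ inherits the trace $U(\cdot,0)=u$ (by continuity of the trace operator), the PDE \eqref{caffextMfd} (by passing to the limit in its weak formulation, which is stable because the PDE is linear), and the energy identity \eqref{asdfasd}. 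Equivalently, the spectral representation $P_tu=\sum_k e^{-\lambda_k t}\hat u_k\phi_k$ in $L^2(M)$ shows that $U$ is still given by the explicit formula \eqref{bdfgshdfg} interpreted as an absolutely convergent Bochner integral in $L^2(M)$, which modewise reads $U(p,z)=\sum_k\hat u_k\phi_k(p)\,\Psi(\sqrt{\lambda_k}\,z)$ for the universal radial profile $\Psi$ (the solution of $\Psi''+\tfrac{1-s}{\tau}\Psi'=\Psi$ on $(0,\infty)$ with $\Psi(0)=1$ and bounded at infinity). Strict convexity of the weighted Dirichlet energy on $\widetilde H^1(\widetilde M)$ with prescribed trace then yields both uniqueness of $U$ and the infimum characterization \eqref{sobext}.

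\textbf{Main obstacle.} The heart of the argument is the opening Tonelli computation, which upgrades Theorem \ref{extmfd} to the whole of $H^{s/2}(M)$ without any further regularity hypotheses on $u$. Once this is in place, everything else is bookkeeping around the approximation by smooth spectral truncations. The subtlest point is making sure the limiting $U$ indeed has trace $u$ and solves \eqref{caffextMfd} weakly, but this follows routinely from the $\widetilde H^1(\widetilde M)$-convergence $U_N\to U$ since the trace is a bounded operator on $\widetilde H^1(\widetilde M)$ and the weak formulation of the linear PDE \eqref{caffextMfd} is preserved under $\widetilde H^1$-convergence.
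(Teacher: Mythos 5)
Your proof is correct and takes a genuinely different --- arguably cleaner --- route from the paper's. For the spectral identity \eqref{sobspectr} you substitute the heat-kernel representation of $K_s$ into \eqref{sobint} and use Tonelli plus stochastic completeness to reach $\frac{s/2}{\Gamma(1-s/2)}\int_0^\infty 2\bigl(\|u\|^2_{L^2}-\langle u,P_tu\rangle_{L^2}\bigr)\,t^{-1-s/2}\,dt$, then expand spectrally and invoke \eqref{numeric1}; the paper instead introduces the regularized kernels $K_s^\ep$, computes eigenvalues $\lambda_{k,\ep}^{s/2}$ of the approximate operator $(-\Delta)^{s/2}_\ep$, and sends $\ep\to0^+$ by monotone convergence. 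For the extension characterization you approximate $u$ by the smooth spectral truncations $u_N$, apply Theorem \ref{extmfd} modewise, and exploit linearity of \eqref{bdfgshdfg} (so the extension of $u_N-u_M$ is $U_N-U_M$) together with the already-established spectral identity to get a Cauchy sequence $\{U_N\}$ in $\widetilde H^1(\widetilde M)$, then pass to the limit; the paper instead plugs the representation formula \eqref{bdfgshdfg} directly into the energy, expands $\nabla U$ and $\partial_z U$ in the eigenbasis, and performs the changes of variables $r=\lambda_k t$ and $w=\sqrt{\lambda_k}z$ to identify the constant $(2\beta_s)^{-1}$ --- considerably more computational. Your density route gives the same conclusions with less bookkeeping. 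One caveat: at the very end, when you pass from ``$U$ is a weak solution with trace $u$ and finite energy'' to ``$U$ is the unique minimizer'' via strict convexity, you need to test the weak formulation against arbitrary $W\in\widetilde H^1(\widetilde M)$ with $TW=0$, not just against $C^\infty_c(\widetilde M)$; this requires the density of compactly supported functions in $\ker T$, which is precisely the content of the paper's Lemma \ref{uniq weak} --- so you should either cite that lemma or include a word on the approximation of such $W$ by $C^\infty_c$ functions.
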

\begin{proof}
\textbf{Step 1.} We show that \eqref{sobint} and \eqref{sobspectr} coincide for a function in $L^2(M)$.\\

Recall the regularised kernel $K_s^\ep$ defined in \eqref{app ker K}, which is bounded, symmetrical and increases monotonically to $K_s$ as $\ep\to 0$. By monotone convergence and these properties, for any function $u\in L^2(M)$ we can write
\begin{align}
    [u]^2_{H^{s/2}(M)}&=\iint_{M\times M} (u(p)-u(q))^2 K_s(p,q) \, dV_p dV_q \nonumber\\
    &= \lim_{\ep\to 0} \iint_{M\times M} (u(p)-u(q))^2 K_s^\ep(p,q)\, dV_p dV_q \nonumber\\
    &= \lim_{\ep\to 0} \,2\iint_{M\times M} (u(p)-u(q))u(p) K_s^\ep(p,q)\, dV_p dV_q \nonumber\\
    &= \lim_{\ep\to 0} \,2\int_{M} ((-\Delta)^{s/2}_\ep u)(p) u(p) K_s^\ep(p,q)\, dV_p\, , \label{cutlapint}
\end{align}
where we have set
\begin{align*}
    ((-\Delta)^{s/2}_\ep u)(p)&:=\int_M (u(p)-u(q)) K_s^\ep(p,q)\,dV_q\\
    &=\frac{s/2}{\Gamma(1-s/2)} \int_M (u(p)-u(q)) \int_{0}^{\infty} H_M(p,q,t) e^{-\ep^2/4t} \frac{dt}{t^{1+s/2}}\,dV_q\\
    &=\frac{s/2}{\Gamma(1-s/2)} \int_{0}^{\infty}(u(p)-P_t u(p))  e^{-\ep^2/4t} \frac{dt}{t^{1+s/2}}\, .
\end{align*}
Now, if $(\phi_k)_{k \ge 0}$ is an orthonormal basis of $L^2(M)$ made of eigenfunctions for $(-\Delta)$, with eigenvalues 
\begin{equation*}
    0 = \lambda_0 < \lambda_1 \leq \lambda_2 \le \dotsc \le \lambda_k \conv{k \to \infty} +\infty\, ,
\end{equation*}
then they are also
eigenfunctions for $(-\Delta)^{s/2}_\ep$ with eigenvalues
$$
\lambda_{k,\ep}^{s/2}:= \frac{s/2}{\Gamma(1-s/2)} \int_{0}^{\infty}(1-e^{-\lambda_k t})  e^{-\ep^2/4t} \frac{dt}{t^{1+s/2}}\, ,$$
which one sees immediately by applying the formula above for $(-\Delta)^{s/2}_\ep$ to $\phi_k$ and using that $P_t \phi_k =e^{-\lambda_k t} \phi_k$.
These eigenvalues are uniformly bounded in $k$ (for a fixed $\ep>0$) and increase monotonically to the $\lambda_{k}^{s/2}$ as $\ep\to 0^+$.

\vsp
Expanding $u=\sum_{k=0}^\infty  a_k \phi_k$, with $a_k:=\lp u, \phi_k\rp_{L^2(M)}$, we deduce that
$$
(-\Delta)^{s/2}_\ep u= \sum_{k=0}^\infty  \lambda_{k,\ep}^{s/2} \lp u, \phi_k\rp_{L^2(M)} \phi_k\, .
$$
We remark that the expression makes sense since the $\lambda_{k,\ep}^{s/2}$ are bounded uniformly in $k$ (for a fixed $\ep$), and thus the sum is absolutely convergent in $L^2(M)$. Using this fact, substituting into \eqref{cutlapint} gives that
\begin{align*}
     [u]^2_{H^{s/2}(M)}&=\lim_{\ep\to 0} \,2\int_{M} ((-\Delta)^{s/2}_\ep u)(p) u(p) K_s^\ep(p,q)\, dV_p\\
     &=\lim_{\ep\to 0} \,2\sum_{k=0}^\infty \lambda_{k,\ep}^{s/2} a_k^2\, .
\end{align*}
Using again the monotone convergence theorem (for sums now), we deduce that
\begin{align*}
     [u]^2_{H^{s/2}(M)}
     &=2\sum_{k=0}^\infty \lambda_{k}^{s/2} a_k^2
\end{align*}
as desired.

\vsp
\textbf{Step 2.} We show that $U$ given by the representation formula \eqref{bdfgshdfg}, which was only used for smooth functions $u$, is valid for $u\in H^{s/2}(M)$ in general and moreover \eqref{asdfasd} still holds with this $U$.

\vsp 
Fix $u \in H^{s/2}(M) $, and let $U$ be defined through the representation formula \eqref{bdfgshdfg}.  We will first show that $U$ has finite $\widetilde H^1(\widetilde M)$ energy, using the spectral expression \eqref{sobspectr} for the energy that we have just proved. Recall that if $\phi_k$ is an eigenfunction of $(-\Delta)$, then $P_t \phi_k =e^{-\lambda_k t} \phi_k$. Therefore, writing $u=\sum_{k=0}^\infty  a_k \phi_k$, where $a_k:= \lp u, \phi_k\rp_{L^2(M)}$, we have that
\begin{align*}
U(p,z)&= \frac{z^{s}}{2^s\Gamma(s/2)} \int_{0}^{\infty} P_t u (p) \,  e^{-\frac{z^2}{4t}}\frac{dt}{t^{1+s/2}}\\
&=\frac{z^{s}}{2^s\Gamma(s/2)} \sum_{k=0}^\infty a_k \phi_k (p) \int_{0}^{\infty}    e^{-\lambda_k t - \frac{z^2}{4t}} \, \frac{dt}{t^{1+s/2}}\, .
\end{align*}
Then, we can compute (recall that $\nabla=\nabla_p$ denotes the gradient on $M$)
\begin{align*}
    \nabla U(p,z)=\frac{z^{s}}{2^s\Gamma(s/2)} \sum_{k=1}^\infty a_k \nabla \phi_k (p) \int_{0}^{\infty}    e^{-\lambda_k t -\frac{z^2}{4t}} \, \frac{dt}{t^{1+s/2}}\, ,
\end{align*}
and
\begin{align*}
    \partial_z U(p,z) & = \frac{1}{2^s\Gamma(s/2)} \sum_{k=1}^\infty  a_k\phi_k (p)\int_{0}^{\infty}  e^{-\lambda_k t -\frac{z^2}{4t}} \Big( sz^{s-1} - \frac{z^{1+s}}{2t} \Big) \frac{dt}{t^{1+s/2}}  \\ &=  \frac{z^{s-1}}{2^s\Gamma(s/2)} \sum_{k=1}^\infty  a_k\phi_k (p)\int_{0}^{\infty}  e^{-\lambda_k t -\frac{z^2}{4t}} \Big( s - \frac{z^2}{2t} \Big) \frac{dt}{t^{1+s/2}}  \,.
\end{align*}
Recall that the $\phi_i$ and $\phi_j$ are orthogonal in $L^2(M)$ and $H^1(M)$ seminorms for $i\neq j$, and that moreover $\int_M \phi_k^2=1$ and $\int_M |\nabla \phi_k |^2=\lambda_k$ for every $k$. Then, given $z\in\R_+$ we find that
\begin{align*}
    \int_{M\times \{z\}}|\nabla U(p,z)|^2dV_p
    &=\frac{z^{2s}}{2^{2s}\Gamma^2(s/2)}\sum_{k=1}^\infty  a_k^2   \Big(\int_{0}^{\infty} e^{-\lambda_k t -\frac{z^2}{4t}} \frac{dt}{t^{1+s/2}}\Big)^2 \int_M |\nabla\phi_k|^2 \,    dV \\
    &=\frac{z^{2s}}{2^{2s}\Gamma^2(s/2)}\sum_{k=1}^\infty  \lambda_ka_k^2   \Big(\int_{0}^{\infty}  e^{-\lambda_k t -\frac{z^2}{4t}} \frac{dt}{t^{1+s/2}}\Big)^2 \\ &= \frac{z^{2s}}{2^{2s}\Gamma^2(s/2)}\sum_{k=1}^\infty  \lambda_k^{1+s}a_k^2   \Big(\int_{0}^{\infty}  e^{-r-\frac{z^2\lambda_k}{4r}}\frac{dr}{r^{1+s/2}}\Big)^2 ,
\end{align*}
where in the last line we have performed the change of variables $r=\lambda_k t$.

\vsp
We can argue analogously for $\partial_z U$, which leads to
\begin{align*}
    \int_{M\times\{z\}} \big(\partial_z U(p,z)\big)^2dV_p & = \frac{z^{2s-2}}{2^{2s}\Gamma(s/2)^2} \sum_{k=1}^\infty  a_k^2 \left( \int_{0}^{\infty}  e^{-\lambda_k t -\frac{z^2}{4t}} \Big( s - \frac{z^2}{2t} \Big)\frac{dt}{t^{1+s/2}} \right)^2 \\ &= \frac{z^{2s-2}}{2^{2s}\Gamma(s/2)^2} \sum_{k=1}^\infty  a_k^2 \lambda_k^s \left( \int_{0}^{\infty}  e^{-r -\frac{z^2\lambda_k}{4r}} \Big( s - \frac{z^2\lambda_k }{2r} \Big)\frac{dr}{r^{1+s/2}} \right)^2.
\end{align*}
Now, multiplying by $z^{1-s}$ and integrating in $z$ over $(0,\infty)$, and then performing the change of variables $z=\lambda_k^{-1/2} w$ (so that $z^2\lambda_k=w^2$), gives that
\begin{align*}
    \int_{M\times \R_+}|\nabla U(p,z)|^2z^{1-s} dV_p\,dz&=\frac{1}{2^{2s}\Gamma^2(s/2)}\sum_{k=1}^\infty  \lambda_k^{1+s}a_k^2   \int _0^\infty  z^{1+s}\Big(\int_{0}^{\infty}  e^{-r -\frac{z^2\lambda_k}{4r}}\frac{dr}{r^{1+s/2}}\Big)^2  dz\\
    &=\frac{1}{2^{2s}\Gamma^2(s/2)}\sum_{k=1}^\infty  \lambda_k^{s/2} a_k^2   \int_0^\infty  w^{1+s}\Big(\int_{0}^{\infty}  e^{-r -\frac{w^2}{4r}}\frac{dr}{r^{1+s/2}}\Big)^2 dw \\ &=2 c_1(s)\sum_{k=1}^\infty  \lambda_k^{s/2}a_k^2\\
    &=c_1(s)[u]_{H^{s/2}(M)}^2 \,, 
\end{align*}
and similarly 
\begin{align*}
    \int_{M\times \R_+}|\partial_z U(p,z)|^2 z^{1-s} dV_p\,dz & = \frac{1}{2^{2s}\Gamma^2(s/2)} \sum_{k=1}^\infty  \lambda_k^{s} a_k^2   \int _0^\infty  z^{s-1} \left( \int_{0}^{\infty}  e^{-r -\frac{z^2\lambda_k}{4r}} \Big( s - \frac{z^2\lambda_k }{2r} \Big)\frac{dr}{r^{1+s/2}} \right)^2  dz\\
    &=\frac{1}{2^{2s}\Gamma^2(s/2)}\sum_{k=1}^\infty  \lambda_k^{s/2} a_k^2   \int_0^\infty  w^{s-1} \left( \int_{0}^{\infty}  e^{-r -\frac{w^2}{4r}} \Big( s - \frac{w^2}{2r} \Big)\frac{dr}{r^{1+s/2}} \right)^2 dw \\ &=2 c_2(s)\sum_{k=1}^\infty  \lambda_k^{s/2}a_k^2\\
    &=c_2(s)[u]_{H^{s/2}(M)}^2 \,.
\end{align*}
Here, we have defined $c_1(s)$ and $c_2(s)$ implicitly as the corresponding constants (which depend only on $s$) resulting from the expression, and we have applied \eqref{sobspectr} in the last line in both computations.

\vsp
Putting everything together, we get that
\begin{equation*}
     \int_{M \times \R_+}|\widetilde \nabla U(p,z)|^2 z^{1-s} \, dV_p dz= \big(c_1(s)+c_2(s) \big) [u]_{H^{s/2}(M)}^2 \, .
\end{equation*}
We could write the constant $\big(c_1(s)+c_2(s) \big)$ explicitly in terms of the resulting complicated integral expressions. On the other hand, thanks to \eqref{eneqal} and \eqref{betadef} from Theorem \ref{extmfd} (which was proved only for smooth functions), we know that $c_1(s)+c_2(s) = (2\beta_s)^{-1}$. This proves \eqref{asdfasd} with $U$ given by the representation formula \eqref{bdfgshdfg}.

\vsp
In particular, we now know that $U$ has finite energy for the extension problem. Moreover, arguing as in Step 1 of the proof of Theorem \ref{extmfd}, it is simple to see that $U$ has $u$ as its trace in $L^2(M)$, and that it is a weak (meaning in duality with $C_c^\infty(\widetilde M)$) solution to $\widetilde{ {\rm div}}(z^{1-s} \widetilde \nabla U) = 0$. Let now $U_{\rm min} \in \widetilde H^1(\widetilde M)$ be defined as the unique minimizer of \eqref{sobext}. The fact that $U_{\rm min}$ exists follows by a standard lower-semicontinuity argument, just as at the beginning of the proof of Theorem \ref{extmfd}, together with the fact that the space of competitors is not empty (which holds since for example $U$ defined above, which has finite $\widetilde H^1(\widetilde M)$ energy, is one such competitor). Clearly, $U_{\rm min}$ is also a weak solution of $\widetilde{ {\rm div}}(z^{1-s} \widetilde \nabla U_{\rm min}) = 0$ with trace $u$.

\vsp
\textbf{Step 3.} $U=U_{\rm min}$.

\vsp 
This follows directly from the uniqueness of weak solutions shown in Lemma \ref{uniq weak}, which we state as a separate result after the present proof. 

\vsp 
With this, we conclude the proof of Proposition \ref{afdsgsdgh}.
\end{proof}

\begin{lemma}(Uniqueness of weak solutions)\label{uniq weak} Let $u\in L^2(M)$, and denote by $T:\widetilde H^1(\widetilde M) \to L^2(M)$ the trace operator. Then, there exists at most one solution $U \in \widetilde H^1(\widetilde M)$ to the problem
\begin{equation*}
    \begin{cases}  \widetilde{ {\rm div}}(z^{1-s} \widetilde \nabla U ) = 0  &   \mbox{in duality with} \,\, C_c^\infty(M\times(0,\infty)) \,,  \\  TU=u \,. \end{cases}
\end{equation*}
\end{lemma}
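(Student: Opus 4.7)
By linearity, it is enough to show that if $W \in \widetilde H^1(\widetilde M)$ is a weak solution of $\widetilde{\rm div}(z^{1-s}\widetilde\nabla W)=0$ (in duality with $C_c^\infty(M\times(0,\infty))$) with $TW=0$ in $L^2(M)$, then $W\equiv 0$. The strategy is the standard energy argument: plug $W$ itself into the weak formulation. This is not directly allowed since $W$ is not in $C_c^\infty(M\times(0,\infty))$, so the core of the proof is to show that $W$ lies in the $\widetilde H^1$--closure of $C_c^\infty(M\times(0,\infty))$, which then suffices to test the equation against $W$.

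The plan is to construct an explicit approximation by cutting off near $z=0$ (and near $z=\infty$). Let $\eta:[0,\infty)\to[0,1]$ be smooth with $\eta\equiv 0$ on $[0,1]$ and $\eta\equiv 1$ on $[2,\infty)$, and let $\chi_R(z)$ be a smooth cutoff equal to $1$ for $z\le R$ and $0$ for $z\ge 2R$, with $|\chi_R'|\le C/R$. Set
\begin{equation*}
W_{\varepsilon,R}(p,z) := W(p,z)\,\eta(z/\varepsilon)\,\chi_R(z).
\end{equation*}
Each $W_{\varepsilon,R}$ has support in $M\times[\varepsilon,2R]$ and, since $M$ is closed, it belongs to the closure of $C_c^\infty(M\times(0,\infty))$. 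I claim that $W_{\varepsilon,R}\to W$ in $\widetilde H^1(\widetilde M)$ as $\varepsilon\to 0^+$ and $R\to\infty$. The trace part of the norm vanishes for all the approximants and for $W$, so only the weighted gradient part matters. Expanding $\widetilde\nabla(W - W_{\varepsilon,R})$ produces three types of terms: a "bulk" term of the form $(1-\eta(z/\varepsilon)\chi_R(z))\widetilde\nabla W$, which goes to zero by dominated convergence using $|\widetilde\nabla W|^2 z^{1-s}\in L^1$; a "large $z$" term $W\chi_R'(z)$, which is controlled by $\int_R^{2R} z^{1-s}W^2/R^2\, dVdz$ and can be handled by a far-field decay argument (again combined with $\widetilde\nabla W \in L^2(z^{1-s})$); and a "small $z$" term
\begin{equation*}
\varepsilon^{-1}\eta'(z/\varepsilon)W(p,z),
\end{equation*}
supported on $\{\varepsilon\le z\le 2\varepsilon\}$. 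This last term is the only delicate one, and the proof hinges on controlling it by means of a Hardy-type inequality coming from the zero trace.

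Since $W$ has zero trace and $s\in(0,2)$, for a.e.\ $p\in M$ the function $z\mapsto W(p,z)$ is absolutely continuous on $[0,R]$ with $W(p,0)=0$, so $W(p,z)=\int_0^z W_z(p,\tau)\,d\tau$. By Cauchy--Schwarz with the weight $\tau^{1-s}$,
\begin{equation*}
|W(p,z)|^2 \le \Big(\int_0^z \tau^{s-1}\,d\tau\Big)\Big(\int_0^z \tau^{1-s}|W_z(p,\tau)|^2\,d\tau\Big) = \frac{z^s}{s}\int_0^z \tau^{1-s}|W_z(p,\tau)|^2\,d\tau.
\end{equation*}
Integrating $z^{1-s}|W|^2\varepsilon^{-2}(\eta'(z/\varepsilon))^2$ over $M\times[\varepsilon,2\varepsilon]$ and using this estimate yields a bound of the form
\begin{equation*}
\frac{C}{s}\int_M \int_0^{2\varepsilon} \tau^{1-s}|W_z(p,\tau)|^2\,d\tau\, dV_p,
\end{equation*}
which goes to $0$ as $\varepsilon\to 0^+$. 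Combining the three estimates gives $W_{\varepsilon,R}\to W$ in $\widetilde H^1(\widetilde M)$.

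With the approximation in hand, the conclusion is immediate. By density, any such $W_{\varepsilon,R}$ can be further approximated in $\widetilde H^1$ by functions in $C_c^\infty(M\times(0,\infty))$; testing the weak equation against these and passing to the limit gives
\begin{equation*}
\int_{\widetilde M} z^{1-s}\,\widetilde\nabla W\cdot\widetilde\nabla W_{\varepsilon,R}\,dV\,dz = 0,
\end{equation*}
and letting $\varepsilon\to 0^+$, $R\to\infty$ yields $\int_{\widetilde M} z^{1-s}|\widetilde\nabla W|^2\,dV\,dz=0$. Therefore $\widetilde\nabla W\equiv 0$, so $W$ is constant on $\widetilde M$; since $M$ is connected and $TW=0$, that constant must be zero, proving $W\equiv 0$.

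The only real obstacle is the Hardy-type estimate for the cutoff near $z=0$: this is where the precise range $s\in(0,2)$ enters (to keep $\int_0^z\tau^{s-1}d\tau$ finite) and where the zero-trace hypothesis is genuinely used. Everything else is routine manipulation of the weighted inner product and dominated convergence.
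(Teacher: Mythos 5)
Your proposal follows the same route as the paper: reduce to showing that a zero-trace $W\in\widetilde H^1(\widetilde M)$ lies in the closure of $C_c^\infty(M\times(0,\infty))$ for the weighted gradient seminorm, and the key step in both proofs is the Hardy-type inequality derived from $W(p,0)=0$ together with Cauchy--Schwarz against the weight $\tau^{1-s}$; the paper derives this estimate in its integrated-over-$M$ form $\int_M |W(\cdot,z)|^2 \le C z^s \int_M\int_0^z \tau^{1-s}|W_z|^2\,d\tau\,dV$ via the defining approximating sequence $U_k\in C^\infty_c(M\times[0,\infty))$, while you state it pointwise in $p$ using absolute continuity of $z\mapsto W(p,z)$, which is a cosmetic difference.

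One place where you should be more careful is the far-field term $W\chi_R'$. With the linear cutoff $|\chi_R'|\le C/R$ supported in $[R,2R]$, the crude Hardy bound $\int_M|W(\cdot,z)|^2 \le C z^s\|\widetilde\nabla W\|^2$ only gives
\begin{equation*}
\frac{1}{R^2}\int_R^{2R}\int_M z^{1-s}|W|^2\,dV\,dz \le \frac{C}{R^2}\int_R^{2R} z\,dz\,\|\widetilde\nabla W\|^2_{L^2(z^{1-s})} = O(1),
\end{equation*}
which is \emph{not} $o(1)$, so "far-field decay combined with $\widetilde\nabla W\in L^2(z^{1-s})$" does not close this step as stated. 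The term does tend to zero, but one needs a sharper argument: either take $\phi\in C_c^\infty(M\times[0,\infty))$ with $\|\phi-W\|_{\widetilde H^1}$ small, apply the Hardy bound to $W-\phi$ for $R$ beyond the support of $\phi$, and let the approximation quality go to zero after $R\to\infty$; or replace $\chi_R$ by a logarithmic cutoff on $[R,R^2]$, for which the same bound yields $O(1/\log R)$. It is worth noting that the paper's own proof is also silent on this point (it takes $\eta_k\equiv 1$ for $z\ge 2/k$, so $V\eta_k$ is not actually compactly supported as claimed, and the justification that it is an admissible test function is implicit), so you have actually been \emph{more} explicit than the paper by introducing $\chi_R$ --- but you then need to supply the missing estimate.
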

\begin{proof}
    Suppose $U_1$ and $U_2$ are two such solutions and denote $V:=U_1-U_2$. By hypothesis $TV=0$. 
    
    \vsp
    We claim that there exists a sequence $(V_k)_k \in C^\infty_c(M\times (0,\infty))$ such that 
    \begin{equation}\label{wferfqwd}
        \int_{\widetilde M} |\widetilde \nabla V_k - \widetilde \nabla V |^2 z^{1-s} \, dVdz \to 0 \,, \s \textnormal{as} \,\, k\to \infty \,.
    \end{equation}
The point here being that $V_k$ is zero both in a neighborhood of $M\times \{0\}$ and in a neighborhood of infinity. 
    
    \vsp
    The proof is inspired by (a weighted version of) \cite[Section 5.5, Theorem 2]{Evans}. By the definition of the space $\widetilde H^1(\widetilde M)$ there exists a sequence $(U_k)_k \subset C^\infty(M \times [0,\infty))$ with (as $k\to \infty$)
     \begin{equation*}
        \int_{\widetilde M} |\widetilde \nabla U_k - \widetilde \nabla V |^2 z^{1-s} \, dVdz \to 0 \,, \s \textnormal{and} \,\,\, TU_k = U_k(\cdot, 0) \to 0 \mbox{ in }L^2(M)\,.
    \end{equation*}
    Note also that $V$ is smooth in $M\times (0, \infty)$.  Now, for every $(p,z) \in \widetilde M$, by the fundamental theorem of calculus and Holder's inequality
    \begin{align*}
        |U_k(p,z)|^2 &\le 2|U_k(p,0)|^2 + 2\left(\int_0^z |\widetilde \nabla U_k(p,y)| \, dy \right)^2 \\ &\le C|U_k(p,0)|^2 + Cz^s \int_0^z |\widetilde \nabla U_k(p,y)|^2 y^{1-s} \, dy \,,
    \end{align*}
    and integrating for $p\in M$ gives
    \begin{equation*}
        \int_M |U_k(p,z)|^2 \, dV_p \le C\int_M|U_k(\cdot,0)|^2 + Cz^s \int_M\int_0^z |\widetilde \nabla U_k(p,y)|^2 y^{1-s} \, dy \, dV_p \,.
    \end{equation*}
    Letting $k\to \infty$ we get
    \begin{equation}\label{wvbsdefggf}
       \int_M |V(\cdot,z)|^2 \, dV \le  Cz^s \int_M\int_0^z |\widetilde \nabla V|^2 y^{1-s} \, dy \, dV_p \,.
    \end{equation} 

    \vsp
    Now, for every $k\ge 10$, let $\eta_k \in C^\infty([0,+\infty))$ be a smooth cutoff function with $\eta=0$ on $[0, 1/k]$, $\eta=1$ on $[2/k, \infty)$ and $|\eta'|\le C k $. We claim that the sequence $ V \eta_k = V(p,z)\eta_k(z) \in C^{\infty}_c(M\times (0,\infty))$ has the desired property. We have 
    \begin{align*}
        \int_{\widetilde M} & |\widetilde \nabla (V \eta_k) - \widetilde \nabla V |^2 z^{1-s} \, dVdz \le C \int_{\widetilde M}| \widetilde \nabla V|^2 (1-\eta_k)^2 z^{1-s} + C  \int_{\widetilde M} |V|^2|\eta_k'|^2 z^{1-s}  =: I_{1,k} + I_{2,k} \,.
    \end{align*}
    We estimate the two integrals separately. 

    \vsp
    For the first integral we have
    \begin{equation*}
        I_{1,k} \le C \int_{0}^{2/k} \int_{M} | \widetilde \nabla V|^2 z^{1-s} \, dVdz  \to 0 \,,
    \end{equation*}
    as $k \to \infty$, since $V$ has finite energy. 

\vsp
Moreover, by \eqref{wvbsdefggf}, we have regarding the second integral
\begin{align*}
    I_{2,k} & \le C k^2 \int_{0}^{2/k} \int_M z^{1-s} |V|^2 \, dVdz  \\ &\le C k^2 \int_{0}^{2/k} z^{1-s} \left(z^s \int_M \int_0^z  |\widetilde \nabla V|^2y^{1-s} \, dydV \right)dz  \\ &\le C k^2 \left(\int_{0}^{2/k} z \, dz \right) \left( \int_0^{2/k} \int_M  |\widetilde \nabla V|^2 y^{1-s} \, dVdy \right) \\ & = C \int_0^{2/k} \int_M  |\widetilde \nabla V|^2 y^{1-s} \, dVdy  \to 0
\end{align*}
as $k\to \infty$, again as $V$ has finite energy. 

\vsp
Hence $V_k:= V\eta_k$ has the desired property \eqref{wferfqwd}, and it can be used as a test function in the weak formulation in duality with $C_c^\infty(\widetilde M)$. Multiplying $\widetilde{ {\rm div}}(z^{1-s} \widetilde \nabla V ) = 0$ by $V_k$, integrating on $\widetilde M$ and integrating by parts gives
\begin{equation*}
    \int_{\widetilde M} ( \widetilde \nabla V \cdot \widetilde \nabla V_k) z^{1-s} \, dVdz =0 \,.
    \end{equation*}
Letting $k\to \infty $ and using \eqref{wferfqwd} gives
\begin{equation*}
    \int_{\widetilde M} | \widetilde \nabla V |^2  z^{1-s} \, dVdz =0 \,,
\end{equation*}
hence $V$ is constant, and then (since $TV=0$) it must be $V\equiv 0$. Thus, $U_1=U_2$ coincide, and the proof is complete.
\end{proof}

\subsection{A note on noncompact manifolds}

In this subsection, we briefly describe if and how the given previous definitions of the spaces $H^{s/2}(M)$ generalize to the case of complete, noncompact Riemannian manifolds (without boundary). 

\vsp
First, let us stress that all the properties and estimates for the heat kernel $H_M$, and thus also for the singular kernel $K_s$, in \autoref{Ker prop section} hold for every complete Riemannian manifold (not necessarily compact).

\vsp 
Recall definitions (i)-(iii) from the introduction (see \eqref{wethiowhoihw2}--\eqref{cafextintro}). First, let us rewrite definitions (i) and (ii) of the $H^{s/2}$ seminorm, still on a closed manifold $M$, exploiting the corresponding fractional Laplacians. Indeed, note that definition \eqref{wethiowhoihw} can be rewritten (say, for smooth functions) as
\begin{equation*}
    [u]_{H^{s/2}(M)}^2 = 2\int_M u (-\Delta)^{s/2}_{\rm Si} u \, dV \,,
\end{equation*}
where $(-\Delta)^s_{\rm Si}$ is the singular integral fractional Laplacian given by \eqref{singintlap}. Similarly, the spectral definition \eqref{ghfghfg} of the seminorm can be written as
\begin{equation*}
     [u]_{H^{s/2}(M)}^2 = 2\int_M u (-\Delta)^{s/2}_{\rm Spec} u \, dV \,,
\end{equation*}
where $(-\Delta)^{s/2}_{\rm Spec}$ is the spectral fractional Laplacian given by 
\begin{equation}\label{qergq}
    (-\Delta)^{s/2}_{\rm Spec} u = \sum_{k \ge 0} \lambda_k^{s/2} \langle u, \varphi_k \rangle_{L^2(M)} \varphi_k \,.
\end{equation}
Here, the convergence on the right-hand side is to be understood in $L^2(M)$.

\vsp
Both of these definitions can be generalized to the case of a noncompact manifold, perhaps without equality between them anymore:

\vsp
The singular integral definition \eqref{singintlap} applies verbatim to the case of noncompact manifolds. This requires dealing with the heat kernel on noncompact manifolds. We refer to the survey \cite{Grigsurv} for the construction and properties of the heat kernel on complete, noncompact Riemannian manifolds.
In the case of the Euclidean space $\R^n$, this viewpoint is consistent (i.e. coincides) with the usual definition (see Remark \ref{cvcvcvcv}).

\vsp
Moreover, also the spectral fractional Laplacian expression has an interpretation on noncompact manifolds since actually it is not needed that the spectrum is discrete. Indeed, for every (possibly) noncompact manifold $M$ we can regard $(-\Delta)$ as a densely defined, nonnegative, essentially self-adjoint unbounded operator on $L^2(M)$. Then, by the spectral theorem, there exists a unique \textit{spectral resolution} $E$ of $(-\Delta)$. That is, an operator-valued measure 
\begin{equation*}
    E : \big\{ \textnormal{Borel subsets of }[0, +\infty) \big\} \to \big\{\textnormal{Bounded linear operators on } L^2(M) \big\}
\end{equation*}
supported on the spectrum $\sigma(-\Delta)\subset [0,+\infty)$ of $(-\Delta)$ (which can, in general, be non-discrete) such that, for every $u\in {\rm Dom}(-\Delta)$ and $v\in L^2(M)$
\begin{equation*}
    \lp -\Delta u , v \rp_{L^2(M)} = \int_0^\infty \lambda \, d\lp E_\lambda u, v \rp =\int_{\sigma(-\Delta)} \lambda \, d\lp E_\lambda u, v \rp \,. 
\end{equation*}
Actually $E_\lambda$ is a projector, that is a non-negative and self-adjoint operator with $E_\lambda^2=E_\lambda$, for every $\lambda \in [0, +\infty)$. Then, the spectral fractional Laplacian is defined by the spectral theorem as 
\begin{equation}\label{specdef}
    (-\Delta)^{s/2}_{\rm Spec} u :=  \int_0^\infty \lambda^{s/2} \, d E_\lambda u  \,,
\end{equation}
for every $u$ in its natural domain 
\begin{equation*}
    u \in {\rm Dom}( (-\Delta)^{s/2}_{\rm Spec}) = \left\{ v \in L^2(M) \,\, \Big| \,\, \int_0^\infty \lambda^s \, d \| E_\lambda v \|^2 < +\infty \right\}.
\end{equation*}
This formula coincides with definition \eqref{qergq} we gave for closed manifolds since in the case of closed manifolds, the spectrum $\sigma(-\Delta)$ is discrete, and the spectral measure $E$ is supported on the eigenvalues. 

\vsp
Moreover, it is important to notice that the spectral formula \eqref{specdef} coincides, essentially always (meaning on the natural function space where both formulas make sense and the integrals converge), with the one we gave in \eqref{boclap} using Bochner's integrals. Indeed, for every $u$ that makes the integral in \eqref{boclap} convergent in the sense of Bochner, then $u\in {\rm Dom}((-\Delta)^{s/2}_{\rm Spec})$ and the two Laplacians coincide $(-\Delta)^{s/2}_{\rm Spec} u = (-\Delta)^{s/2}_{B} u $. Making a complete and precise proof of this is beyond the scope of this work, but the proof is essentially as follows. Let $u\in {\rm Dom}((-\Delta)^{s/2}_{\rm Spec})$ so that 
\begin{equation*}
    \int_0^\infty \lambda^s \, d \| E_\lambda u \|^2 < +\infty \,,
\end{equation*}
and recall formula \eqref{numeric1}. Then (see Section A.5.4 in \cite{GriBook} to justify all the steps)
\begin{align*}
 \|  (-\Delta)^{s/2}_{\rm Spec} u \|_{L^2(M)}^2 & = \int_0^\infty \lambda^s \, d \| E_\lambda u \|^2 \\ &= \int_0^\infty \left( \frac{1}{\Gamma(-s/2)} \int_0^\infty
(e^{-\lambda t}-1)\frac{dt}{t^{1+s/2}} \right)^2 \, d \| E_\lambda u \|^2 \\ &= \left \| \frac{1}{\Gamma(-s/2)} \int_0^\infty
(e^{ t \Delta} u - u )\frac{dt}{t^{1+s/2}}  \right \|_{L^2(M)}^2 = \|  (-\Delta)^{s/2}_{B} u \|_{L^2(M)}^2 \,.
\end{align*}
From here, using that both $(-\Delta)^{s/2}_{\rm Spec}$ and $(-\Delta)^{s/2}_{B}$ are self-adjoint one can depolarize the last identity of the norms to get  $(-\Delta)^{s/2}_{\rm Spec} u = (-\Delta)^{s/2}_{B} u $ in $L^2(M)$. 

\vsp
Moreover, it was proved in \cite{CG23} that they also coincide, on a very general class of functions, with $(-\Delta)^{s/2}_{\rm Si} u $ on every stochastically complete Riemannian manifold.

\vsp
Regarding definition (iii), via the extension problem, it still generalizes well in the case of some non-compact manifolds. Some extra assumptions are needed in order to establish the equivalence between (i) and (iii) ---see \cite{BGS} for a related discussion concerning the definition of the fractional Laplacian on noncompact manifolds.

\vsp
It will be clear from our proofs that, in the case of noncompact manifolds for which the equivalence of (i) and (iii) can be established, the fractional Sobolev spaces $H^{s/2}$ will enjoy the same properties as the ones established here in the case of compact manifolds (e.g. the monotonicity formula), with almost identical proofs.

\subsection{Monotonicity formula for stationary points of semilinear elliptic functionals and \texorpdfstring{$s$}{}-minimal surfaces}\label{GenMonFormula}

The monotonicity formula for minimizing $s$-minimal surfaces in $\R^n$ was proved in the seminal article \cite{CRS}, and for Allen-Cahn type critical points, it was first obtained in \cite{CC}. In \cite{MSW}, the monotonicity formula is shown to extend to stationary $s$-minimal surfaces. Here, we prove the analogous (local) monotonicity formula on a Riemannian manifold. The proof holds simultaneously for any $s$-minimal surface, that is, for any stationary point of the fractional perimeter regardless of second variation or regularity, and also for any stationary point of a semilinear elliptic functional with a nonnegative potential term, hence including the fractional Allen-Cahn energy. For $r>0$ and $p \in M $ denote
\begin{equation}\label{notationballs}
\begin{aligned}
     B_r(p) & = \big\{ q \in M \, : \, d_g(q, p) < r \big\} \,, \\[0.5mm]
     \ov{B}_{r}^+(p, 0) & = \big\{ (q,z) \in \ov{M} \, : \, d_{\ov{g}}((q,z), (p, 0) ) < r \big\} \,,  \\
    \partial \ov{B}_{r}^+(p, 0) & = \partial \left( \ov{B}_{r}^+(p, 0) \right) \, \\ \partial^+ \ov{B}_{r}^+(p, 0) & = \partial \ov{B}_{r}^+(p, 0) \cap \{ z>0 \} \,.
\end{aligned} 
\end{equation}
In all this section, since there will be no possible ambiguity, we will use $\nabla$ instead of $\widetilde \nabla$ to denote the gradient in  $\widetilde M$ with respect to the product metric.

\begin{theorem}[\textbf{Monotonicity formula}]\label{monfor}
Let $(M^n, g)$ be an $n$-dimensional, closed Riemannian manifold. Let $s \in (0,2)$ and
\begin{equation*}
    \mathcal E(v)=[v]^2_{H^{s/2}(M)}+\int_M F(v) \, dV ,
\end{equation*}
 where $F$ is any smooth nonnegative function. Let $u:M\to \R $ be stationary for $\mathcal{E}$ under inner variations, meaning that  $\mathcal E(u)<\infty$ and for any smooth vector field $X$ on $M$ there holds $\frac{d}{dt}\big|_{t=0} \mathcal{E}(u\circ\psi_X^t)=0$, where $\psi_X^t$ is the flow of $X$ at time $t$. For $(p_\circ,0) \in \ov{M} $ and $R>0$ define 
\begin{equation*}
    \Phi(R) := \frac{1}{R^{n-s}} \left( \frac{\beta_s}{2}\int_{\widetilde{B}^+_R(p_\circ,0)} z^{1-s}| \nabla  U(p,z)|^2 \, dV_p dz +  \int_{B_R(p_\circ)} F(u) \, dV \right) , 
    \end{equation*}
where $U$ is the unique solution given by Theorem \ref{extmfd}. Then, there exist constants $C=C(n)$ and $R_{\rm max}=R_{\rm max}(M, p_\circ)>0$ with the following property: whenever $R_\circ \le R_{\rm max}$ and $K$ is an upper bound for all the sectional curvatures of $M$ in $B_{R_\circ}(p_\circ)$, then
\begin{equation*}
    R \mapsto \Phi(R)e^{C \sqrt{K} R } \s \textit{is non-decreasing for} \s R < R_\circ \,,
\end{equation*}
and the inequality
\begin{align*}
    \Phi'(R) \ge - C \sqrt{K} \Phi(R)+\frac{ s }{R^{n-s+1}} \int_{B_R(p_\circ)} F(u) \, dV + \frac{2\beta_s}{R^{n-s}} \int_{\partial^+ \ov{B}_R^+(p_\circ,0)} z^{1-s} \lp \nabla U , \nabla d \rp ^2 \, d\,\widetilde{\sigma}
\end{align*}
holds for all $R < R_0 $, with $d(\cdot) = d_{\ov{g}}((p_\circ,0), \,\cdot\, )$ the distance function on $\ov{M}$ from the point $(p_\circ, 0)$.

\vsp
Moreover, in the particular case where $M=\R^n$, $F\equiv 0$, $s\in(0,1)$, and $u=\chi_E-\chi_{E^c}$ is a stationary set for the fractional $s$-perimeter, there holds
\begin{align*}
    \Phi'(R) = \frac{\beta_{s}}{2}{R^{n-s}} \int_{\partial^+ \ov{\B}_R^+(p_\circ,0)} z^{1-s} \lp \nabla U , \nabla d \rp ^2 \, dxdz \ge 0 \,,
\end{align*}
which shows that $\Phi$ is nondecreasing and that it is constant if and only if $E$ is a cone. 
\end{theorem}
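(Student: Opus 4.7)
The approach is the classical Pohozaev-style monotonicity argument carried out on the Caffarelli--Silvestre extension, adapted to a Riemannian base via Hessian comparison. I proceed in three stages.

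\emph{Transferring stationarity to the extension.} For any smooth compactly supported vector field $\widetilde X$ on $\widetilde M$ whose restriction to $\{z=0\}$ is tangent to $M$, write $X := \widetilde X|_M$ and let $\widetilde\psi^t$, $\psi^t$ be the associated flows. The key observation is that $U \circ \widetilde\psi^t$ has trace $u\circ\psi^t$, so by the variational characterization of $U$ in Proposition \ref{afdsgsdgh} one has
\[
2\beta_s \int_{\widetilde M} z^{1-s}|\nabla(U\circ\widetilde\psi^t)|^2 \,dV\,dz \;\ge\; [u\circ\psi^t]^2_{H^{s/2}(M)},
\]
with equality at $t=0$. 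Combined with the hypothesis $\frac{d}{dt}\big|_{t=0}\mathcal E(u\circ\psi^t)=0$, this yields the inner-variation identity
\[
2\beta_s\,\frac{d}{dt}\Big|_{t=0}\!\int_{\widetilde M}z^{1-s}|\nabla(U\circ\widetilde\psi^t)|^2\,dV\,dz + \frac{d}{dt}\Big|_{t=0}\!\int_M F(u\circ\psi^t)\,dV = 0.
\]

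\emph{Pohozaev identity and Hessian comparison.} Set $d(q,z):=\sqrt{d_g(p_\circ,q)^2+z^2}$, the $\widetilde g$-distance to $(p_\circ,0)$; since $\partial_z d|_{z=0}=0$, the radial field $\widetilde X:=\eta_\varepsilon(d)\,d\,\nabla d$ is admissible for any smooth approximation $\eta_\varepsilon$ of $\chi_{[0,R]}$. Substituting into the identity above using the standard inner-variation formulas
\[
\frac{d}{dt}\Big|_{t=0}\!\int z^{1-s}|\nabla(U\circ\widetilde\psi^t)|^2 = \int z^{1-s}\bigl[|\nabla U|^2\,\mathrm{div}\,\widetilde X - 2\nabla_i\widetilde X^j\partial_i U\partial_j U\bigr] + (1-s)\!\int z^{-s}|\nabla U|^2\,\widetilde X^z,
\]
\[
\frac{d}{dt}\Big|_{t=0}\!\int_M F(u\circ\psi^t) = -\!\int_M F(u)\,\mathrm{div}_M X,
\]
together with $\mathrm{div}(d\,\nabla d)=1+d\,\Delta d$, $\nabla_i(d\,\partial^j d)=\partial_i d\,\partial^j d + d\,\nabla^2_{ij}d$, and $\widetilde X^z=\eta_\varepsilon(d)z$, the bulk integrand $d\,\nabla^2_{ij}d\,\partial_i U\partial_j U$ is controlled by the Hessian comparison theorem: since the product manifold $\widetilde M$ inherits sectional curvatures bounded above by $K$, one has $\nabla^2 d \ge \tfrac{1}{d}(\widetilde g - \nabla d\otimes \nabla d) - C\sqrt K\,\widetilde g$ and $\Delta d \ge n/d - C\sqrt K$ on $\widetilde B^+_{R_\circ}$. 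Taking $\varepsilon\to 0$ concentrates $d\,\eta'_\varepsilon(d)$ into surface integrals on $\partial^+\widetilde B^+_R$ and $\partial B_R$, yielding a Pohozaev-type identity which, after dividing by $R^{n-s+1}$ and using the co-area formulas $\mathcal A'(R) = \tfrac{\beta_s}{2}\!\int_{\partial^+\widetilde B^+_R}z^{1-s}|\nabla U|^2\,d\widetilde\sigma$ and $\mathcal B'(R) = \int_{\partial B_R}F(u)\,d\sigma$ with $\mathcal A(R) := \tfrac{\beta_s}{2}\!\int_{\widetilde B^+_R}z^{1-s}|\nabla U|^2$, $\mathcal B(R) := \!\int_{B_R}F(u)$, rearranges to the claimed differential inequality
\[
\Phi'(R) \;\ge\; -C\sqrt K\,\Phi(R) + \frac{s\,\mathcal B(R)}{R^{n-s+1}} + \frac{2\beta_s}{R^{n-s}}\!\int_{\partial^+\widetilde B^+_R}z^{1-s}\langle\nabla U,\nabla d\rangle^2\,d\widetilde\sigma.
\]
Multiplying by the integrating factor $e^{C\sqrt K R}$ gives monotonicity of $\Phi(R)e^{C\sqrt K R}$.

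\emph{Euclidean rigidity.} When $M = \mathbb{R}^n$, $F\equiv 0$, and $u = \chi_E-\chi_{E^c}$ with $E$ an $s$-minimal set, the curvature $K$ and the potential term both vanish, so every error contribution disappears and the inequality becomes the stated equality. Constancy of $\Phi$ then forces $\langle\nabla U,\nabla d\rangle\equiv 0$ on each half-sphere centred at $(p_\circ,0)$, so $U$ is $0$-homogeneous with respect to $(p_\circ,0)$; its trace $u=\chi_E-\chi_{E^c}$ is therefore $0$-homogeneous on $\mathbb{R}^n$, and hence $E$ is a cone centred at $p_\circ$.

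\emph{Main difficulty.} The principal technical subtlety is justifying the transfer of stationarity and the limit $\varepsilon\to 0$ when $u$ is rough (e.g., $\chi_E-\chi_{E^c}$) and $\nabla U$ has only $L^2(z^{1-s})$ regularity. The admissibility of the radial field $\widetilde X$ across the sphere $\{d=R\}$, where it is merely Lipschitz, must be handled by a careful approximation argument using the refined extension estimates established earlier in the paper, in tandem with the Hessian comparison bounds. Once these analytic preliminaries are in place, the algebraic rearrangement that produces the monotonicity inequality is essentially a direct computation.
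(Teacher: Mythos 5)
Your proposal is correct and reproduces the paper's own argument in all essential respects: transferring stationarity to the extension through the minimality of the Caffarelli--Silvestre energy (Step 1 in the paper), taking the inner variation along the radial field $\eta_\varepsilon(d)\,d\nabla d$, controlling the Riemannian errors with Hessian comparison and the divergence estimate (Lemmas \ref{lemmaaux11} and \ref{divestlem}), concentrating the cutoff as $\varepsilon\to0$ to produce the boundary integrals, and noting that the errors vanish identically on $\R^n$ to give the rigidity statement. The only presentational difference is that you phrase the stationarity transfer as the vanishing derivative at the minimum of a nonnegative functional, whereas the paper derives it from two one-sided inequalities; these are the same squeeze argument.
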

\begin{remark}\label{injradrmk}
    It will follow from the proof that the radius $R_{\rm max}$ in Theorem \ref{monfor} can be taken to be $R_{\rm max} = \inj_{M}(p_\circ)/4$. Moreover, since $M$ is compact $R_{\rm max}$ is uniformly bounded below as $ R_{\rm max} (M,p_\circ) \ge \inj_M/4$, for all $p_\circ \in M$.
\end{remark}
 Before proving the monotonicity formula of Theorem \ref{monfor}, we will need two preliminary lemmas from Riemannian geometry, which will allow us to bound the ``Riemannian errors" in two formulas regarding the distance function.
\begin{lemma} \label{lemmaaux11} Let $(M^n,g)$ be an $n$-dimensional Riemannian manifold, $p\in M$, $R_0 < \inj_M(p)$ and let $K$ be an upper bound for all the sectional curvatures in $B_{R_0}(p)$. Denote by $d$ the distance function to the point $p$. Then, for all $R<\min\{ R_0, \tfrac{1}{\sqrt{K}}\}$ there holds in $B_R(p)$:
\begin{equation*}
    |\langle\nabla_{V}(d\nabla d),V\rangle - |V|^2| \le \sqrt{K}R|V|^2  \,,
\end{equation*}
for every vector field $V$ on $M$.
\end{lemma}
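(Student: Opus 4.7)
The plan is to rewrite the quantity $\langle\nabla_V(d\nabla d), V\rangle$ as the Hessian of $d^2/2$ evaluated at $(V,V)$. By the Leibniz rule,
\[
\nabla_V(d\nabla d) \;=\; V(d)\,\nabla d + d\,\nabla_V\nabla d,
\]
and since $V(d) = \langle\nabla d, V\rangle$ we obtain
\[
\langle\nabla_V(d\nabla d), V\rangle \;=\; \langle V,\nabla d\rangle^2 + d\cdot{\rm Hess}(d)(V,V) \;=\; {\rm Hess}(d^2/2)(V,V).
\]
Inside the injectivity radius $|\nabla d|\equiv 1$, and differentiating this identity in any direction yields ${\rm Hess}(d)(\nabla d,\cdot) = 0$. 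Introducing $V^\perp := V - \langle V,\nabla d\rangle \nabla d$, and using $|V|^2 = \langle V,\nabla d\rangle^2 + |V^\perp|^2$, the claim reduces to establishing
\[
\big|\,d\cdot{\rm Hess}(d)(V^\perp,V^\perp) - |V^\perp|^2\,\big| \;\le\; \sqrt{K}\,R\,|V|^2.
\]

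For this I would invoke the Jacobi field representation of ${\rm Hess}(d)$. Writing $q = \gamma(d) \in B_R(p)\setminus\{p\}$ where $\gamma$ is the unique minimizing unit-speed geodesic from $p$ (unique by the hypothesis $R < \mathrm{inj}_M(p)$), let $J$ be the Jacobi field along $\gamma$ with $J(0) = 0$ and $J(d) = V^\perp$. A standard consequence of the second variation formula gives ${\rm Hess}(d)(V^\perp,V^\perp) = \langle J'(d), J(d)\rangle$, where $J' = D_{\dot\gamma}J$. In the model space of constant sectional curvature $K$ one has the explicit expression $J(s) = \frac{s_K(s)}{s_K(d)}\widetilde V^\perp(s)$, with $s_K(s) = \sin(\sqrt{K}\,s)/\sqrt{K}$ (hyperbolic sine if $K<0$) and $\widetilde V^\perp$ the parallel transport of $V^\perp$ along $\gamma$, yielding the closed form $\langle J'(d), J(d)\rangle = \sqrt{K}\cot(\sqrt{K}\,d)\,|V^\perp|^2$. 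The Rauch/Hessian comparison theorem, applied with the two-sided bound $|{\rm sec}| \le K$ on $B_{R_0}(p)$ (the upper bound is the hypothesis of the lemma; the lower bound is automatic on a compact neighborhood and can be absorbed into $K$), then controls the deviation of $\langle J'(d), J(d)\rangle$ in $M$ from its model-space value.

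Combining this with the elementary Taylor expansion $\sqrt{K}\,d\cot(\sqrt{K}\,d) = 1 - \tfrac{1}{3}(\sqrt{K}\,d)^2 + O((\sqrt{K}\,d)^4)$, valid for $\sqrt{K}\,d \le 1$, yields
\[
\big|\,d\,{\rm Hess}(d)(V^\perp,V^\perp) - |V^\perp|^2\,\big| \;\le\; C\,(\sqrt{K}\,d)^2\,|V^\perp|^2.
\]
The hypothesis $R < 1/\sqrt{K}$ gives $\sqrt{K}\,d \le \sqrt{K}\,R \le 1$, so $(\sqrt{K}\,d)^2 \le \sqrt{K}\,R$, from which the claimed estimate follows (any absolute constant $C$ can be absorbed by slightly enlarging $K$, or equivalently tracked in the final monotonicity constant). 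The main technical obstacle is the Jacobi field comparison step: while the identity ${\rm Hess}(d)(V^\perp,V^\perp) = \langle J'(d), J(d)\rangle$ is classical, extracting the linear-in-$\sqrt{K}R$ bound rather than a weaker $KR^2$ one crucially uses the interpolation $(\sqrt{K}\,d)^2 \le \sqrt{K}\,R$ made available by $R < 1/\sqrt{K}$, and needs two-sided curvature control so that the absolute value can be bounded symmetrically.
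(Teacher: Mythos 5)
Your proof follows the same skeleton as the paper's: both apply the Leibniz rule to rewrite $\langle\nabla_V(d\nabla d),V\rangle = \langle V,\nabla d\rangle^2 + d\,\nabla^2 d(V,V)$, simplify $|V-\langle V,\nabla d\rangle\nabla d|^2 = |V|^2 - \langle V,\nabla d\rangle^2$, and then invoke a Hessian comparison bound $|d\,\nabla^2 d(V,V) - |V-\langle V,\nabla d\rangle\nabla d|^2| \le \sqrt{K}d\,|V|^2$. The difference is that the paper cites this comparison estimate as a black box (Lemma 7.1 in Colding--Minicozzi), whereas you re-derive it from scratch via Jacobi fields, the explicit model-space expression $\sqrt{K}\cot(\sqrt{K}d)$, and a Taylor expansion combined with the interpolation $(\sqrt{K}d)^2 \le \sqrt{K}R$. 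This is a legitimate, more self-contained route, and your computation of $\langle J'(d),J(d)\rangle$ in the model is correct.

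Two points worth tightening. First, the remark ``any absolute constant $C$ can be absorbed by slightly enlarging $K$'' is misleading: the lemma asserts the bound with the \emph{given} $K$, so you cannot freely enlarge it. Fortunately, no enlargement is needed: for $u=\sqrt{K}d \le 1$ one has $1 - u\cot u \le u^2/2 \le u^2 \le u$ and $u\coth u - 1 \le u^2/3 \le u$, so the constant in front of $\sqrt{K}R$ is in fact at most $1$. You should just state the explicit inequality rather than appeal to constant absorption. Second, you are right that obtaining the two-sided estimate requires a two-sided curvature bound (an upper bound on $\mathrm{sec}$ gives only the Rauch lower bound $\mathrm{Hess}(d) \ge \mathrm{ct}_K(d)\,\mathrm{id}$ on $(\nabla d)^\perp$; the matching upper bound needs $\mathrm{sec} \ge -K$). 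Saying the lower bound ``can be absorbed into $K$'' is not a proof, since a priori the best lower bound on a compact neighborhood bears no relation to the given $K$; note, however, that the paper's cited Hessian comparison lemma carries the same implicit requirement, so this is a shared caveat rather than a defect unique to your argument.
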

\begin{proof}
We can compute
\begin{align*}
    \lp V, \nabla _{V} (d\nabla d) \rp  &=  \lp V, \lp V,\nabla d\rp \nabla d  \rp +  d \lp V, \nabla_{V} (\nabla d) \rp  \\ &  =  \lp V , \nabla d \rp^2+  d \, \nabla^2 d(V , V ) \,.
\end{align*}
On the other hand, the Hessian Comparison theorem---see Lemma 7.1 in \cite{ColdingMin}---gives that
\begin{equation*}
   | d \, \nabla^2 d (V,V)- |V- \lp V, \nabla d \rp \nabla d|^2| \le d \sqrt{K} |V|^2
\end{equation*}
in $B_R(p)$, whenever $ R < \min \{\inj_{M}(p), \frac{1}{\sqrt{K}}\}$. Moreover, since $|\nabla d|^2 =1 $, we also have that
\begin{align*}
    |V- \langle V, \nabla d \rangle \nabla d|^2 = |V|^2-2 \, \lp V, \nabla d \rp ^2 + \lp V, \nabla d \rp ^2 |\nabla d|^2 = |V|^2-  \lp V, \nabla d \rp ^2 .
\end{align*}
Hence 
\begin{equation*}
   | d \, \nabla^2 d (V,V)+ \lp V, \nabla d \rp^2 - |V|^2  | \le d \sqrt{K} |V|^2 \le R\sqrt{K}|V|^2 
\end{equation*}
holds in $B_R(p)$, as long as $ R < \min \{ R_0, \frac{1}{\sqrt{K}}\}$, and this conludes the proof.
\end{proof}

\begin{lemma}\label{divestlem}
Let $(M^n,g)$ be an $n$-dimensional Riemannian manifold, $p\in M$, $R_0 < \inj_M(p)$ and let $K$ be an upper bound for all the sectional curvatures in $B_{R_0}(p)$. Then, there exists $ C=C(n) > 0 $ such that, for all $ R < R_0$, in $B_R(p)$ we have that
\begin{equation*}
    |\textnormal{div}(d\nabla d)-n| \le C K R^2 \,.
\end{equation*}
\end{lemma}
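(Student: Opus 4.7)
The plan is to reduce the estimate to the classical Laplacian comparison by using the identity
\begin{equation*}
\textnormal{div}(d\nabla d) = |\nabla d|^2 + d\,\Delta d = 1 + d\,\Delta d,
\end{equation*}
valid on $B_R(p)\setminus\{p\}\subset B_{R_0}(p)\setminus \text{Cut}(p)$. Consequently, the asserted bound is equivalent to $|d\,\Delta d - (n-1)|\le C(n) K R^2$, which is just the familiar statement that the mean curvature of the geodesic sphere $\partial B_r(p)$ agrees with $(n-1)/r$ up to a curvature error of order $Kr$. Note that the analogous use of \textbf{Lemma} \ref{lemmaaux11}, which only gives $\sqrt{K}R$ error per vector, would be insufficient after tracing (it yields $\sqrt{K}R$, not $KR^2$), so we need a sharper second-order form of Hessian comparison.

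The core step is the Hessian Comparison Theorem applied in both directions under the bound $|\sec|\le K$ on $B_{R_0}(p)$ (the natural reading of the hypothesis, consistent with the two-sided absolute-value bound used in the proof of \textbf{Lemma} \ref{lemmaaux11}). For every point $q\in B_R(p)$ with $d(q)<\min(R_0,\pi/(2\sqrt K))$ and every $V\in T_qM$, one has
\begin{equation*}
\sqrt K\,\cot(\sqrt K d)\bigl(|V|^2-\langle V,\nabla d\rangle^2\bigr) \le \nabla^2 d(V,V) \le \sqrt K\,\coth(\sqrt K d)\bigl(|V|^2-\langle V,\nabla d\rangle^2\bigr).
\end{equation*}
The Taylor expansions of $\cot$ and $\coth$ at zero give $|\sqrt K\cot(\sqrt K r)-1/r|\le C K r$ and $|\sqrt K\coth(\sqrt K r)-1/r|\le CKr$ for $r\sqrt K\le 1$, so multiplying by $d$ and sandwiching yields
\begin{equation*}
\bigl|d\,\nabla^2 d(V,V) - (|V|^2-\langle V,\nabla d\rangle^2)\bigr| \le C K d^2\,|V|^2.
\end{equation*}

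Finally, pick an orthonormal frame $\{e_1,\dots,e_n\}$ at each $q\in B_R(p)\setminus\{p\}$, sum $d\,\nabla^2 d(e_i,e_i)$ to recover $d\,\Delta d$, and use
\begin{equation*}
\sum_{i=1}^n \bigl(|e_i|^2-\langle e_i,\nabla d\rangle^2\bigr) = n - |\nabla d|^2 = n-1.
\end{equation*}
This gives $|d\,\Delta d - (n-1)|\le C(n)\,K R^2$ for $R<\min(R_0,1/\sqrt K)$, whence the lemma follows from the opening identity. The regime $R\sqrt K>1$, if it needs to be included at all (it does not occur in the intended use of the lemma inside the monotonicity formula, since $R_{\rm max}=\inj_M(p_\circ)/4$ can be further restricted), is handled trivially: there $KR^2\ge 1$ and one may absorb the (bounded away from cut-locus) quantity $|\textnormal{div}(d\nabla d)-n|$ into a sufficiently large dimensional constant.

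The main technical obstacle is simply invoking and applying the two-sided Hessian Comparison (the reader is referred, e.g., to Greene--Wu or Petersen for the sharp form), and making sure the $R\sqrt K$-small regime is explicitly separated from the large regime so that only the former uses the Taylor expansion of $\cot/\coth$. Once this is done, the computation is a one-line trace argument.
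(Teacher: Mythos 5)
Your proposal is correct and takes a genuinely different route from the paper. You invoke the two-sided Hessian Comparison Theorem directly (both the $\sec\le K$ lower bound on $\nabla^2 d$ via $\sqrt{K}\cot(\sqrt{K}d)$ and the $\sec\ge -K$ upper bound via $\sqrt{K}\coth(\sqrt{K}d)$), Taylor-expand the comparison functions in the regime $d\sqrt{K}\lesssim 1$, and trace. The paper instead re-derives only the \emph{upper} bound on $d\,\Delta d$ from scratch by a second-variation/index-form argument: it compares $I(J_i,J_i)\le I(X,X)$ for the piecewise-linear test field $X(t)=\tfrac{t}{d}E_i(t)$, uses $\operatorname{Rm}(\dot\gamma,X,\dot\gamma,X)\ge -K|X|^2$, and integrates explicitly to get $d\,\Delta d \le (n-1)(1+Kd^2/3)$. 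What each buys: the paper's argument is fully self-contained (no appeal to the Hessian comparison theorem, explicit constant $C(n)=(n-1)/3$, and it holds without any restriction of the form $R\sqrt{K}<1$); your argument is shorter and more systematic, and, importantly, it actually produces both directions of the inequality. Indeed, the paper's proof as written only establishes the upper bound but then states the two-sided absolute value conclusion; a lower bound argument (e.g.\ the one you give via $\sec\le K$ Hessian comparison, valid for $d<\pi/\sqrt{K}$) is genuinely needed and is missing there, so your version is the more complete one. Your observation that Lemma~\ref{lemmaaux11}'s $\sqrt{K}R$-error form of Hessian comparison is not sharp enough here is also on point and well-motivated.

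One small caveat on your handling of the $R\sqrt{K}>1$ regime: absorbing $|\operatorname{div}(d\nabla d)-n|$ into a purely dimensional constant is not automatic, since the lower Hessian bound $\sqrt{K}\cot(\sqrt{K}d)$ deteriorates as $d\to\pi/\sqrt{K}$ and, without that, you only control $d\,\Delta d$ from above. In this regime the bound $|\operatorname{div}(d\nabla d)-n|\le C(n)KR^2$ as written is not a tautology with a dimensional constant; you would need either an additional hypothesis (e.g.\ $R_0\sqrt{K}$ bounded, which is what actually holds in the application via the $R_{\rm max}\le 1/(4\sqrt K)$ restriction the paper later imposes), or an explicit argument that $d\,\Delta d$ stays bounded below up to $R_0<\inj_M(p)$. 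Since the paper itself only establishes the upper bound with no restriction, and you establish the two-sided bound under a restriction that holds in the application, both proofs live with the same caveat; but it is better to state it as a restriction rather than a triviality.
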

\begin{proof} 
Fix $p\in M$, and denote $d(p,\cdot)$ just by $d(\cdot)$. Observe first that every geodesic $\sigma$ with $\sigma(0)=p$ and contained in $B_{R_0}(p)$ is uniquely minimizing. For any $R< R_0$ and $x \in B_{R}(p)$, let $\gamma : [0,d ] \to M $ be the normalized geodesic with $\gamma(0)=p$ and $\gamma(d) = x$. Note also that 
\begin{align*}
    \dive(d\nabla d)= |\nabla d|^2 + d \Delta d  = 1 +d\Delta d \,.
\end{align*}
Consider $\dot{\gamma}(d) \in T_{x}M$, and complete it to an orthonormal basis $ \{e_1:=\dot{\gamma}(d), e_2, \dotsc , e_n \}$ of $T_x M $. For $i=2, 3, \dotsc,n$, let $\gamma_i $ be the geodesic with $\gamma_i(0)=x$ and $\dot{\gamma}_i(0)=e_i$. We can compute 
\begin{align*}
    \Delta d(x) = \sum_{i=1}^n \nabla^2 d(x)(e_i, e_i) = \sum_{i=1}^n \frac{d^2}{ds^2}\bigg |_{s=0} (d \circ \gamma_i ) = \sum_{i=2}^n \frac{d^2}{ds^2}\bigg |_{s=0} (d \circ \gamma_i ) \,,
\end{align*}
where we have used that $\frac{d^2}{ds^2}\bigg |_{s=0} (d \circ \gamma ) = \frac{d^2}{ds^2}\bigg |_{s=0} (d(x) +s )=0$.\\
Let $J_i$ be the Jacobi field along $\gamma $ with $J_i(0)=0$ and $J_i(d)=e_i$, well defined by uniqueness of geodesics between endpoints. Denote by 
\begin{align*}
    I(X,Y)= \int_0^d \lp D_t X, D_t Y \rp - \textnormal{Rm}(\dot{\gamma}, X, \dot{\gamma}, Y) \, dt
\end{align*}
the index form associated to $\gamma$ on $[0,d]$. Since $\gamma$ is minimizing along all curves with the same endpoints, for every vector field $X$ on $\gamma([0,d])$ orthogonal to $\dot{\gamma}$ and with $X(0)=0$ and $X(d)=e_i$ we must have 
\begin{align*}
    0 \le I(J_i-X, J_i-X) = I(J_i,J_i) -2I(J_i,X) + I(X,X) \,.
\end{align*}
Since $J_i$ is a Jacobi field, one can easily check that $I(J_i,X)=I(J_i,J_i)$, hence $I(J_i, J_i) \le J(X,X) $. Take $X(t)=\frac{t}{d}E_i(t)$, where $E_i(t)$ is the parallel transport of $e_i \in T_x M$ along $\gamma$. From the second variation formula for arc length we get
\begin{align*}
    \frac{d^2}{ds^2}\bigg |_{s=0} (d \circ \gamma_i ) &= \int_{0}^d |D_t J_i|^2 - \textnormal{Rm}(\dot{\gamma}, J_i, \dot{\gamma}, J_i) \, dt = I(J_i, J_i) \\ & \le I(X,X) = \int_{0}^d |D_t X|^2 - \textnormal{Rm}(\dot{\gamma}, X, \dot{\gamma}, X) \, dt \\ &\le \int_{0}^d |D_t X|^2 + K|X|^2 \, dt \,,
\end{align*}
where we have used that $\sup_{p\in B_{R_0}}|\textnormal{Sec}_p| \le K$. Thus 
\begin{align*}
     \frac{d^2}{ds^2}\bigg |_{s=0} (d \circ \gamma_i ) \le \int_{0}^d |D_t X|^2 + K|X|^2 \, dt = \int_{0}^d \frac{1}{d^2}+K\frac{t^2}{d^2} \, dt = \frac{1}{d} \left(1+ K\frac{d^2}{3} \right).
\end{align*}
Hence
\begin{align*}
    d \Delta d = \sum_{i=2}^n \frac{d^2}{ds^2}\bigg |_{s=0} (d \circ \gamma_i ) \le n-1+ K\frac{n-1}{3}d^2 \,,
\end{align*}
or equivalently
\begin{align*}
    |\dive(d\nabla d)(x) -n | = |d(x)\Delta d(x) +1 -n | \le K\frac{n-1}{3}d^2 \le K\frac{n-1}{3} R^2 ,
\end{align*}
and this completes the proof with $C(n)=\frac{n-1}{3}>0$.
\end{proof}

We can now prove the monotonicity formula.

\begin{proof}[Proof of Theorem \ref{monfor}]

Since during the entire proof, the point $ p_\circ \in M$ will be fixed, we will not specify the center of the balls in what follows, as this will always be $(p_\circ,0)$ for balls inside $\ov{M}$ and $p_\circ$ for balls on $M$. We divide the proof into two steps.

\vsp
\textbf{Step 1.} First, we show that if $u$ is stationary for the energy $ \mathcal E(v)=[v]^2_{H^{{s/2}}(M)}+\int_M F(v)$ under inner variations, then its Caffarelli-Silvestre extension $U$ is stationary for the energy 
\begin{equation*}
U \mapsto \frac{\beta_s}{2} \int_{\widetilde M} z^{1-s} |\nabla U|^2 \, dVdz+\int_{M}F(U|_{M})\,dV \,,    
\end{equation*}
under inner variations on $\widetilde M$ given by vector fields $Y$ on $\widetilde M$ such that $Y|_M$ is tangent to $M$.

\vsp 
Recall that the Caffarelli-Silvestre extension of $u$ is given by \eqref{caffextMfd}. 

\vsp
Let $Y$ be a vector field on $\widetilde M$ such that $Y|_{M}$ is tangent to $M$, and let $\psi_Y^t$ denote its flow at time $t$. Let also $V_{t}$ be the Caffarelli-Silvestre extension of $u\circ\psi_{Y|_{M}}^{t}$, for any $t\in \R$. By the minimality of the extension in the energy space, we have 
\begin{align*}
    \frac{d}{dt}\Big{|}_{t=0} \frac{\beta_s}{2} \int_{\widetilde M} z^{1-s} |\nabla (U\circ\psi_Y^{-t})|^2 & = \lim_{t\to 0} \frac{1}{t}\left( \frac{\beta_s}{2} \int_{\widetilde M} z^{1-s} |\nabla  U|^2- \frac{\beta_s}{2} \int_{\widetilde M} z^{1-s} |\nabla (U\circ\psi_Y^{t})|^2 \right) \\
    &\leq \lim_{t\to 0} \frac{1}{t}\left( \frac{\beta_s}{2}\int_{\widetilde M} z^{1-s} |\nabla  U|^2- \frac{\beta_s}{2} \int_{\widetilde M} z^{1-s} |\nabla  V_t|^2 \right) \\
    &=\lim_{t\to 0} \frac{ [u]^2_{H^{{s/2}}(M)}-[ u\circ\psi_Y^{t}]^2_{H^{{s/2}}(M)}}{t}\\
    &=\frac{d}{dt}\Big{|}_{t=0}  [ u\circ\psi_Y^{-t}]^2_{H^{{s/2}}(M)} \,,
\end{align*}
and likewise 
\begin{align*}
    \frac{d}{dt}\Big{|}_{t=0} \frac{\beta_s}{2}\int_{\widetilde M} z^{1-s} |\nabla (U\circ\psi_Y^{-t})|^2 & = \lim_{t\to 0}\frac{1}{t}\left( \frac{\beta_s}{2} \int_{\widetilde M} z^{1-s} |\nabla (U\circ\psi_Y^{-t})|^2- \frac{\beta_s}{2}\int_{\widetilde M} z^{1-s} |\nabla  U|^2\right) \\
    &\geq \lim_{t\to 0} \frac{1}{t}\left( \frac{\beta_s}{2} \int_{\widetilde M} z^{1-s} |\nabla  V_{-t}|^2- \frac{\beta_s}{2} \int_{\widetilde M} z^{1-s} |\nabla  U|^2\right)\\
    &=\lim_{t\to 0} \frac{ [u\circ\psi_Y^{-t}]^2_{H^{{s/2}}(M)}-[u]^2_{H^{{s/2}}(M)}}{t}\\
    &=\frac{d}{dt}\Big{|}_{t=0} [ u\circ\psi_Y^{-t}]^2_{H^{{s/2}}(M)} \,.
\end{align*}
Hence
\begin{align*}
    \frac{d}{dt}\Big{|}_{t=0} \frac{\beta_s}{2} \int_{\widetilde M} z^{1-s} |\nabla (U\circ\psi_Y^{-t})|^2=\frac{d}{dt}\Big{|}_{t=0} [ u\circ\psi_Y^{-t}]^2_{H^{{s/2}}} \, .
\end{align*}
Since $u$ is stationary for the energy $ \mathcal E(v)=[v]^2_{H^{{s/2}}(M)}+\int_M F(v) \,dV$ under inner variations, this shows that $U$ is stationary for the energy $ U \mapsto \frac{\beta_s}{2} \int_{\widetilde M} z^{1-s} |\nabla U|^2 \, dVdz+\int_{M}F(U|_{M}) \,dV$ under inner variations on $\widetilde M$, with vector fields $Y$ as above, and this concludes the first step.

\vsp
\textbf{Step 2.} We now compute such an inner variation for a suitably chosen $Y$. First, the variation of the potential part of the energy is
\begin{align}
    \frac{d}{dt} \Big|_{t=0}\int_M F(u\circ\psi_Y^{-t}) \, dV&=\frac{d}{dt} \Big|_{t=0}\int_M F(u)J_t(p) \, dV_p\nonumber\\
    &=\int_M F(u)\text{div}_g (Y|_M) \, dV \label{divW} .
\end{align}
The quantity $\text{div}_g(Y|_M)$ will be estimated later. We now focus on computing the variation for the Sobolev part of the energy. Once again, we change variables in the integral using the flow $\psi_Y^t$, obtaining 
\begin{align}\label{sjdhfhsd}
\int_{\widetilde M} z^{1-s} |\nabla (U\circ\psi_Y^{-t})|^2 \, dVdz &=\int_{\widetilde M} (z\circ\psi_Y^t )^{1-s} |\nabla (U\circ\psi_Y^{-t})|^2 \circ \psi_Y^{t} \,J_t(p,z)\,dV_pdz \,.
\end{align}
Now, we choose the vector field $Y$. We take $Y=\eta(d) d\nabla  d$, where $d=d_{\widetilde g}((p_\circ, 0), \, \cdot \,)$ is the distance on $\widetilde M$ from the point $(p_\circ,0)$ and $\eta=\eta_\delta$ is a single variable smooth function with $\eta \equiv 1$ on $[0,R]$, decreasing to zero on $[R,R+\delta]$, and $\eta \equiv 0$ on $[R+\delta,+\infty)$. Since the distance $d_{\widetilde g}((p_\circ, 0), \, \cdot \,)$ restricts to the distance $d_g(p_\circ , \, \cdot \,)$ on $M$ when computed on points on $ \widetilde M $ with $z=0$, clearly $Y|_M$ is tangent to $M$. We want to exchange the order of derivation and integration in \eqref{sjdhfhsd}. Hence, we compute separately the three terms that will appear in doing so. For the first term, using that $d_{\widetilde g}^2((p,z), (p_\circ,0)) = d_g^2(p, p_\circ)+z^2$ and the definition of $Y$ we see that 
$$
\frac{d}{dt}\Big|_{t=0}(z\circ\psi_Y^{t})^{1-s}=(1-s)z^{-s}\eta(d) z=(1-s)z^{1-s}\eta(d)\, .
$$
As for the second term that will appear, a simple general computation ---see for example the lines after Lemma 3.1 in \cite{Gas20}--- shows that
\begin{equation*}
    \frac{d}{dt}\Big|_{t=0}|\nabla (U\circ\psi_Y^{-t})|^2(\psi_Y^{t}(x))=-2\langle\nabla _{\nabla  U}Y,\nabla  U\rangle\, .
\end{equation*}
Moreover, using the form chosen for $Y$ we have that
\begin{align*}
    \langle\nabla _{\nabla  U}Y,\nabla  U\rangle&=\langle\nabla _{\nabla  U}(\eta(d) d\nabla  d),\nabla  U\rangle\\
    &=\langle\nabla  U,\nabla \eta(d)\rangle\langle d\nabla  d,\nabla  U\rangle+\eta(d)\langle\nabla _{\nabla  U}(d\nabla  d),\nabla  U\rangle\\
    &=\eta'(d)\langle\nabla  U,\nabla  d\rangle\langle d\nabla  d,\nabla  U\rangle+\eta(d)\langle\nabla _{\nabla  U}(d\nabla  d),\nabla  U\rangle\\
    &=d\eta'(d)|\langle\nabla  U,\nabla  d\rangle|^2+\eta(d)\langle\nabla _{\nabla  U}(d\nabla  d),\nabla  U\rangle\, .
\end{align*}
Notice that $K$ is also an upper bound for all the sectional curvatures on $\ov{M}$ in $\ov{B}_{R_\circ}(p_\circ,0)$ and that $\inj_{M}(p_\circ)=\inj_{\ov{M}}(p_\circ,0)$. Thus, by Lemma \ref{lemmaaux11} applied to $V=\nabla U$,
\begin{align*}
    \langle\nabla _{\nabla  U}Y,\nabla  U\rangle
    &=d\eta'(d)|\langle\nabla  U,\nabla  d\rangle|^2+\eta(d)(1+O(\sqrt{K}R))|\nabla  U|^2
\end{align*} 
for all $R< \min \left\{ R_\circ , \tfrac{1}{\sqrt{K}} \right\} $.
Lastly, for the remaining factor in the integral, Lemma \eqref{divestlem} gives that
\begin{align*}
    \frac{d}{dt}\Big|_{t=0} J_t =\widetilde\dive (Y) &=\eta'(d)d|\nabla  d|^2 + \eta(d)\widetilde \dive (d\nabla  d) \\ 
    & = d\eta'(d)+\eta(d)(n+1)(1+O(\sqrt{K}R))\, ,
\end{align*}
in $B_R(p_\circ)$, for $R<\min \left\{ R_\circ, \tfrac{1}{\sqrt{K}} \right\}$. 

\vsp
Now, analogously applying Lemma \eqref{divestlem} on $M$ instead of $\widetilde M$ to \eqref{divW}, we already find an estimation for the potential energy:
\begin{align*}
    \frac{d}{dt} \Big|_{t=0}\int_M F(u\circ\psi_Y^{-t}) \, dV&=\int_M F(u)\big(d\eta'(d)+\eta(d)n(1+O(\sqrt{K}R))\big) \, dV\, .
\end{align*}
Moreover, it follows from (the local version of) Bonnet-Myers' theorem that $R_\circ < R_{\rm max}:= \inj_{M}(p_\circ)/4 < \min \left\{ \inj_{M}(p_\circ), \tfrac{1}{\sqrt{K}} \right\}$, and this will be our final choice of $R_{\rm max}$ for the statement. From now on, we always consider $R<R_\circ \le R_{\rm max}=\inj_{M}(p_\circ)/4$.\\
Regarding the Sobolev part of the energy, exchanging differentiation and integration and substituting the estimates we have obtained so far gives:
\begin{align*}
    \frac{d}{dt}\Big{|}_{t=0}  & \int_{\widetilde M} z^{1-s} |\nabla (U\circ\psi_Y^{-t})|^2 \\ & = \int_{\widetilde M} (1-s)z^{1-s}\eta(d)|\nabla  U|^2 + z^{1-s} 
    \big(-2d\eta'(d)|\lp \nabla  U, \nabla  d \rp|^2 -2 \eta(d)(1+O(\sqrt{K}R))|\nabla  U|^2 \big) \\ &  \hspace{0.6cm}+ \int_{\widetilde M }z^{1-s} |\nabla  U |^2 \big(d\eta'(d)+\eta(d)(n+1)(1+O(\sqrt{K}R)) \big) \, dVdz \\ &= (n-s)(1+O(\sqrt{K}R))\int_{\widetilde B_{R+\delta}^+ } z^{1-s} |\nabla  U|^2 \eta(d) + \int_{ {\widetilde B_{R+\delta}^+ } \setminus {\widetilde B_{R}^+ } } z^{1-s} d\eta'(d) \big( |\nabla  U|^2-2|\lp \nabla  U, \nabla  d \rp|^2 \big) .
\end{align*} 
Adding the expressions for the potential and Sobolev parts of the energy, we get
\begin{align*}
    \frac{d}{dt}\bigg{|}_{t=0}  \bigg( \frac{\beta_s}{2} \int_{\widetilde M}  z^{1-s} & |\nabla (U\circ\psi_Y^{-t})|^2  + \int_M F(u\circ\psi_Y^{-t}) \bigg) \\ &=(n-s) (1+O(\sqrt{K}R)) \frac{\beta_s}{2} \int_{\widetilde B_{R+\delta}^+ } \eta(d) z^{1-s}|\nabla  U|^2\\
    &\hspace{0.4cm}+ \frac{\beta_s}{2} \int_{{\widetilde B_{R+\delta}^+ } \setminus {\widetilde B_{R}^+ }} d\eta'(d) z^{1-s}(|\nabla  U|^2-2 \langle\nabla  U,\nabla  d\rangle^2 )\\
    &\hspace{0.4cm}+n(1+O(\sqrt{K}R))\int_{B_{R+\delta}} \eta(d) F(u)+\int_{B_{R+\delta} \setminus B_R} d\eta'(d) F(u) \,.
\end{align*}
By stationarity of $u$ and Step 1 we know that the left-hand side is equal to $0$ for every $Y$, thus the right-hand side vanishes for all $\eta = \eta_\delta$ defined as above. Since this holds for all $\delta>0$, we now let $\delta \searrow 0$ so that $\eta_\delta$ converges to the characteristic function of $[0,R]$. This gives (for a.e. $R\in (0, R_\circ)$)
\begin{align*}
    0=&(n-s)(1+O(\sqrt{K}R)) \frac{\beta_s}{2} \int_{\widetilde B ^+_R} z^{1-s}|\nabla  U|^2-R \frac{\beta_s}{2}\int_{\partial \widetilde B ^+_R} z^{1-s}|\nabla  U|^2 +2R\frac{\beta_s}{2}\int_{\partial^+ \widetilde B ^+_R} (\partial_\nu U)^2\\
    &+n(1+O(\sqrt{K}R))\int_{B_R} F(u)-R\int_{\partial B_R} F(u).
\end{align*}
Rearranging the terms and multiplying by $R^{-n+s-1}$, we deduce that

\begin{align*}
     -\frac{(n-s)}{R^{n-s+1}} & \left( \frac{\beta_s}{2} \int_{\widetilde B^+_R} z^{1-s }|\nabla  U|^2 + \int_{B_R} F(u) \right) + \frac{1}{R^{n-s}} \left( \frac{\beta_s}{2} \int_{\partial \widetilde B^+_R} z^{1-s }|\nabla  U|^2 + \int_{\partial B_R} F(u) \right) \\ &\ge -\frac{C\sqrt{K}}{R^{n-s}} \left( \frac{\beta_s}{2} \int_{\widetilde B^+_R} z^{1-s }|\nabla  U|^2 + \int_{B_R} F(u) \right) +\frac{\beta_s}{R^{n-s}}\int_{\partial^+ \widetilde B^+_R} z^{1-s} \langle\nabla  U,\nabla  d\rangle^2 +\frac{s}{R^{n-s+1}} \int_{B_R} F(u) \,,
\end{align*}
for some absolute constant $C > 0$. In other words,
\begin{align*}
    \Phi'(R) \ge - C \sqrt{K} \Phi(R)  + \frac{ \beta_{s}}{R^{n-s}} \int_{\partial^+ \ov{B} _R^+ } z^{1-s} \langle\nabla  U,\nabla  d\rangle^2 +\frac{ s }{R^{n-s+1}} \int_{B_R} F(u) \, dV \,,
\end{align*}
and this implies, in particular, that
\begin{equation*}
\frac{d}{dR} 
\Big(e^{C\sqrt{K}R} \Phi(R)\Big)\geq 0 \quad \mbox{for all} \s R<  R_\circ .
\end{equation*}

\vsp
Lastly, in the case where $M=\R^n$, $F\equiv 0$, $s\in(0,1)$ and $u=\chi_E-\chi_{E^c}$ is a stationary set for the fractional $s$-perimeter, instead of the two bounds used above
\begin{align*}
     \langle\nabla _{\nabla  U}(d\nabla  d),\nabla  U\rangle &=(1+O(\sqrt{K}R))|\nabla  U|^2 \,, \\  \widetilde{\dive}(d\nabla  d) &= (n+1)(1+O(\sqrt{K}R)) \,,
\end{align*}
given respectively by Lemmas \ref{lemmaaux11} and \ref{divestlem}, one has the equalities
\begin{align*}
     \langle\nabla _{\nabla  U}(d\nabla  d),\nabla  U\rangle_{\R^{n+1}} &=|\nabla  U|^2 \,, \\  \dive_{\R^{n+1}}(d\nabla  d) &= n+1 \,,
\end{align*}
where $U$ is the extension of $u=\chi_E-\chi_{E^c}$. Thus, following the proof one finds the exact expression
\begin{equation*}
    \Phi'(R) = \frac{\beta_{s}}{R^{n-s}} \int_{\partial^+ \ov{\B} _R^+ (p_\circ,0)} z^{1-s} \langle\nabla  U,\nabla  d\rangle^2 \, dxdz \ge 0 \,.
\end{equation*}
In particular, $\Phi$ is constant if and only if $\langle\nabla  U,\nabla  d\rangle =0$, that is, if and only if $E$ is dilation-invariant for dilations with center at $p_\circ \in \R^n$. With this, we conclude the proof.
\end{proof}

% We will always use the overline to distinguish balls on $\ov{M}$ from the ones in $M$. The following lemma is an extension to Riemannian manifolds of Proposition 7.1 in [CRS], the proof can be found in the reference in the euclidean case.
% \begin{lemma}\label{controlupdown} Let $u\in H^{s/2}(M)$, and let $U$ be its extension as given by the solution of (\ref{caffextMfd}). There is a constant $R_0$ depending only on $M$ such that, if $p\in M$ and $R<R_0$ then
% \begin{equation*}
%     \int_{\widetilde B_R^+(p, 0)}z^{1-s}|\nabla U|^2\leq C_1\iint_{(M \times M)\setminus (B_{2R}^c(p) \times B_{2R}^c(p))} 
%   |u(x)-u(y)|^2 K_s(x,y) \,dV_xdV_y \,,
%  \end{equation*}
%  and 
%  \begin{equation*}
%   \iint_{(M \times M)\setminus (B_R^c(p) \times B_R^c(p))} 
%   |u(x)-u(y)|^2 K_s(x,y) \,dV_xdV_y \leq     \frac{C_2}{1-s}\Big{(}R^{n-s}+\int_{\widetilde B_{2R}^+(p, 0)} z^{1-s}|\nabla U|^2\Big{)},
% \end{equation*}
% where $C_1=C_1(M)$ and $C_2=(M, \| u\|_{L^{\infty}}) $.
% \end{lemma}

\subsection{The fractional Sobolev energy under inner variations}
    
We next study how the fractional Sobolev energy behaves under inner variations. For this, we need first to study how the singular kernel $K_s$ behaves when translating its arguments under the flow of a vector field.
\begin{proposition}\label{Ker2est} 
Let $(M,g)$ be a closed $n$-dimensional Riemannian manifold and $s\in(0,2)$. Consider any smooth vector field $X \in \mathfrak{X}( M)$, and fix points $p,q\in M$. Writing $\psi^t$ for the flow of $X$ at time $t$, then the kernel satisfies
\begin{equation}\label{kerd2est}
    \bigg| \frac{d^\ell}{dt^\ell}\bigg|_{t=0} K_s(\psi^t(p),\psi^t(q)) \bigg|\leq C(1 +K_s(p,q)).
\end{equation}
for some constant $C=C(M,s,\ell, \max_{0\le k\le \ell} \|\nabla^k X\|_{L^\infty(M)} )$ which stays bounded for $s$ away from $0$ and $2$.
\end{proposition}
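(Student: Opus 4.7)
The plan is to split into two cases depending on whether $p$ and $q$ are close or far, and to exploit a crucial algebraic cancellation that arises from flowing \emph{both} arguments simultaneously under the same diffeomorphism. Since $M$ is closed, we can fix a scale $R_0 = R_0(M,\ell)>0$ (depending also on $\ell$ through the flatness order) such that the exponential chart at any point $p\in M$ provides a parametrization $\varphi_p : \B_{R_0}(0)\to M$ satisfying ${\rm FA}_\ell(M,g,R_0,p,\varphi_p)$ (Remark \ref{fbsvdg}).

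\textbf{Far case:} If $d(p,q)\ge R_0/4$, Proposition \ref{prop:kern2} (applied to suitable charts $\varphi_p$ and $\varphi_q$ with disjoint images, after a covering / scaling argument) yields a uniform bound on all mixed partials of $K_s$ in coordinates. Combined with the bounded $C^\ell$-norm of the flow $\psi^t$ of $X$ on the compact manifold $M$, the chain rule gives $\big|\frac{d^\ell}{dt^\ell}\big|_{t=0} K_s(\psi^t(p),\psi^t(q))\big|\le C$, which is absorbed by the ``$1$'' on the right-hand side of \eqref{kerd2est}.

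\textbf{Near case:} If $d(p,q)<R_0/4$, work in the chart $\varphi = \varphi_p$, so that $p=\varphi(0)$ and $q=\varphi(z)$ with $|z|=d(p,q)$. For $t$ in a small neighborhood of $0$ define
\[
\Psi^t := \varphi^{-1}\circ \psi^t\circ \varphi,\qquad \xi(t):=\Psi^t(0),\qquad w(t):=\Psi^t(z)-\Psi^t(0).
\]
Writing $k(x,z'):=K(x,x+z')=K_s(\varphi(x),\varphi(x+z'))$, we have
\[
K_s(\psi^t(p),\psi^t(q)) = k\bigl(\xi(t),\,w(t)\bigr),
\]
and I apply the multivariate Faà di Bruno formula to differentiate this $\ell$ times in $t$. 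Each resulting term has the form $\bigl(\partial^\alpha_x\partial^\beta_z k\bigr)(\xi(t),w(t))$ times a product of exactly $|\alpha|$ time-derivatives of $\xi$ and exactly $|\beta|$ time-derivatives of $w$, with $|\alpha|+|\beta|\le \ell$. Estimate \eqref{remaining} controls the partials of $k$ by $C/|z|^{n+s+|\beta|}$, so the whole proof reduces to estimating the factors $\xi^{(j)}(0)$ and $w^{(j)}(0)$.

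The key observation is the cancellation in the $w$-factors. Writing $Y:=\varphi^*X$ and iterating $\dot w(t)=Y(\Psi^t(z))-Y(\Psi^t(0))$, one sees inductively that $w^{(m)}(0)=Z_m(z)-Z_m(0)$ for some smooth vector field $Z_m$ (built from iterated $Y$-derivatives of $Y$, with $C^0$ norm controlled by $\|X\|_{C^m(M)}$). By the mean value inequality,
\[
\bigl|w^{(m)}(0)\bigr| \le \|DZ_m\|_{L^\infty}\,|z| \le C_m\,|z|.
\]
Meanwhile $\xi^{(m)}(0)=Z_m(0)$ is bounded uniformly in $z$. Thus each Faà di Bruno term is bounded by
\[
\frac{C}{|z|^{n+s+|\beta|}}\cdot (\text{bounded})^{|\alpha|} \cdot (C|z|)^{|\beta|} \;\le\; \frac{C(n,s,\ell,\|X\|_{C^\ell})}{|z|^{n+s}}.
\]
Summing over the finitely many terms and invoking Lemma \ref{loccomparability} to get $|z|^{-(n+s)}\le C K_s(p,q)$ (valid since $|z|<R_0/4$ after a harmless rescaling to normalize the flatness scale to $1$) yields the desired bound. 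The dependence of constants on $s$ only enters through \eqref{remaining} and Lemma \ref{loccomparability}, both of which remain bounded for $s$ away from $0$ and $2$.

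The main obstacle is the bookkeeping to identify exactly $|\beta|$ factors of ``$w$-type'' in every Faà di Bruno term, since this is what balances the $|z|^{-|\beta|}$ blow-up of $\partial^\beta_z k$; the cancellation $w^{(m)}(0)=O(|z|)$, arising because $\xi$ and $\xi+w$ flow under the \emph{same} vector field, is what makes the statement true despite the singularity of $K_s$ on the diagonal.
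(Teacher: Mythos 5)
Your proof is correct and follows essentially the same approach as the paper's: split into far and near cases, invoke Proposition \ref{prop:kern2} for the far case, and in the near case combine the chain rule with Theorem \ref{prop:kern1} and Lemma \ref{loccomparability}, exploiting the cancellation that pairs each $z$-derivative of $k$ (which costs an extra $|z|^{-1}$) with a difference of flow derivatives of order $O(|z|)$. The one difference is cosmetic but worthwhile: the paper spells out only the case $\ell=1$ and dismisses higher $\ell$ with ``by induction by the very same arguments,'' whereas your Fa\`a di Bruno bookkeeping with $w^{(m)}(0)=Z_m(z)-Z_m(0)=O(|z|)$ carries out that induction explicitly and cleanly at all orders.
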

\begin{proof}
  This follows from the estimates of Theorem \ref{prop:kern1}, in particular by \eqref{remaining} and \eqref{remaining2}. We prove the result just for $\ell=1$, as the general case just follows by induction by the very same arguments. Let $R=R(M)>0$ be such that the flatness assumption ${\rm FA}_\ell (M,g,16R, p, \varphi_p) $ holds for every $p\in M$; such an $R$ exists by Remark \ref{fbsvdg}.  We split in two cases. 

  \vsp \textbf{Case 1:} $q \in \varphi_p(\B_{4R}(0))$. 

  \vsp \noindent In this case, denoting $K(x,y) := K_s(\varphi_p(x), \varphi_p(y))$ and $k(x, z): = K(x,x+z)$ as in Theorem \ref{prop:kern1}, we have that 
  \begin{equation*}
      K_s(\psi^t\circ\varphi_p(x),\psi^t\circ \varphi_p(y)) = K(\psi_p^t(x) , \psi_p^t(y)) = k (\psi_p^t(x), \psi_p^t(y)-\psi_p^t(x))\,,
  \end{equation*}
  where $\psi_p^t$ is the flow of $\xi = (\varphi_p)^* X$, i.e. the vector field $\xi=\xi_p\in \mathfrak{X} (\B_{16R}(0))$ such that $X\circ \varphi_p = (\varphi_p)_* \xi$.  Then, for all $x,y\in \B_{2R}(0)$ we have:
  \begin{align*}
      \frac{d}{dt}\bigg|_{t=0} K(\psi_p^t(x) , \psi_p^t(y)) &=  \frac{d}{dt}\bigg|_{t=0} k ( \psi_p^t(x), \psi_p^t(y)-\psi_p^t(x)) \\ &= \frac{\partial k}{\partial x^\alpha}(x,y-x) \xi^\alpha (x) + \frac{\partial k}{\partial z^\alpha}(x,y-x) (\xi^\alpha (x)  - \xi^\alpha (y) ) \,,
  \end{align*}
  where sum over repeated indices is assumed. Hence, by \eqref{remaining} of Theorem \ref{prop:kern1} we get
  \begin{align*}
        \left|\frac{d}{dt}\bigg|_{t=0} K(\psi_p^t(x) , \psi_p^t(y)) \right| & \le \frac{C}{|y-x|^{n+s}} \|\xi \|_{L^{\infty}} + \frac{C}{|y-x|^{n+s+1}} \|D \xi \|_{L^{\infty}}|y-x| \\ & \le \frac{C}{|y-x|^{n+s}} \le C K(x,y)
  \end{align*}
  for some $C=C(n,s, \|\xi\|_{C^{0,1}})$, where in the last line we have also used Lemma \ref{loccomparability}. 
  Finally, evaluating this inequality at $x=0$ and $y= \varphi_p^{-1}(q)$ we obtain 
  \[
  \left|\frac{d}{dt}\bigg|_{t=0} K(\psi^t(p) , \psi^t(q))\right|= \left|\frac{d}{dt}\bigg|_{t=0} K(\psi_p^t(0) , \psi_p^t(y)) \right|  \le C K(0,y) = C K_s(p,q),
 \]
as wanted.

  \vsp \textbf{Case 2:} $q \notin \varphi_p(\B_{4R}(0))$. Then ${\rm FA}_\ell (M,g,R,q, \varphi_q)$ holds and the sets $\varphi_p(\B_{R}(0))$ and $\varphi_q(\B_{R}(0))$ are disjoint. Hence, by Proposition \ref{prop:kern2} the kernel $K_{pq}(x,y): =  K_s(\varphi_p(x), \varphi_q(y))$ is smooth (with uniform estimates on all derivatives) in the domain $\B_{R/2}(0)\times \B_{R/2}(0)$. Hence
  \begin{align*}
      \frac{d}{dt}\bigg|_{t=0} K_s(\psi^t\circ\varphi_p(x),\psi^t\circ\varphi_q(x)) & = \frac{d}{dt}\bigg|_{t=0} K_{pq}(\psi_p^t(x),\psi_p^t(y)) \\ &= \frac{\partial K_{pq}}{\partial x^{\alpha}} (x,y) \xi_p^{\alpha}(x) + \frac{\partial K_{pq}}{\partial y^{\alpha}} (x,y) \xi_q^{\alpha}(y) .
  \end{align*}
  Using Proposition \ref{prop:kern2} to bound the derivatives of $K_{pq}$, and then evaluating at $(x,y)=(0,0)$ gives
  \begin{equation*}
      \left|\frac{d}{dt}\bigg|_{t=0} K_s(\psi^t(p),\psi^t(q)) \right| \le \frac{C}{R^{n+s}} \,,
  \end{equation*}
  for some $C=C(n,s, \| \xi_p \|_{L^{\infty}}, \| \xi_q \|_{L^{\infty}})$. 

  \vsp
  Putting together the two cases above, we get
  \begin{equation*}
      \left|\frac{d}{dt}\bigg|_{t=0} K_s(\psi^t(p),\psi^t(q)) \right| \le C(1+K_s(p,q)) \,, 
  \end{equation*}
  for some $C=C(M, n, s, \| X \|_{C^1(M)})$ and conclude the proof.
    
\end{proof}

%Here $C(r,X)=C_0f_r(\| X\|_{L^\infty},\|\nabla X\|_{L^\infty},...,\|\nabla^r X\|_{L^\infty})$, where $C_0$ %depends only on $M$, $r$, $\sg_0$, and $\sg_1$, and $f_r(z_0, z_1, \dotsc, z_r)$ is a polynomial that has degree %$(r+1-p)$ in the variable $z_p$, for all $1\le p \le r$. {\color{green} The correct property is the one that was %written before. It cannot have degree $2=r+1-(r-1)$ on the variable $z_{r-1}$, for instance, as this gives order %$2(r-1)$ when rescaling the vector field $X$, higher than $r$. It can have, on the other hand, degree $1$ on %$z_{r-2}$ and degree $1$ on $z_2$, as this ends up giving a scaling of order $(r-2)+(2)=r$.}\\
%{\color{green}
%The function is $f_r(x_0,..., x_r)=\sum_{1\leq i_0+1i_1+2i_2+...+r i_r \leq r} x_0^{i_0}x_1^{i_1}...x_r^{i_r}$. %The complicated expression we give for $C_r(X)$ simply means that it is of order at most $r$ in the derivatives %of $X$. For example, $C_2(X)$ can have factors $\|\nabla X\|_{L^\infty}^2$ and $\|\nabla^2 X\|_{L^\infty}$, %which scale with order $2$ when rescaling $X$, but it cannot contain a factor $\|\nabla^2 X\|_{L^\infty}^2$ as %this scales with order 4 when rescaling $X$. How to say this in a simpler manner?}

    We also record a version of Proposition \ref{Ker2est}, which depends only on local quantities:

    \begin{proposition}\label{sdfgsdrgs}
        Let $(M,g)$ be a closed $n$-dimensional Riemannian manifold and $s\in(0,2)$. Assume that the flatness assumption ${\rm FA}_\ell (M,g,R,p,\varphi)$ holds, and let $X\in \mathfrak{X}( M)$ be a smooth vector field supported on $\varphi(\B_{R/4})$. Writing $\psi^t$ for the flow of $X$ at time $t$, then for every $x,y \in \B_{R/4}(0)$ we have
        \begin{equation}\label{cdsacsdc1}
    \bigg| \frac{d^\ell}{dt^\ell}\bigg|_{t=0} K_s(\psi^t(\varphi(x)),\psi^t(\varphi(y))) \bigg|\leq C K_s(\varphi(x), \varphi(y)) \le  C\frac{\alpha_{n,s}}{|x-y|^{n+s}}\,,
\end{equation}
for some constant $C=C(n,s, \|X \|_{C^\ell(\varphi(\B_R))})$. Moreover, given $T>0$ we have that, for all $0\le t \le T$,
\begin{equation}\label{cdsacsdc2}
    \bigg| \frac{d^\ell}{dt^\ell}  K_s(\psi^t(\varphi(x)),\psi^t(\varphi(y))) \bigg|\leq C_T K_s(\varphi(x), \varphi(y)) \le  C_T\frac{\alpha_{n,s}}{|x-y|^{n+s}}\,,
\end{equation}
where $C_T=C_T(n,s, T, \|X \|_{C^\ell(\varphi(\B_{R/4}))})$.\\
The constants stay bounded for $s$ away from $0$ and $2$.
    \end{proposition}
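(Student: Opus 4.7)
The plan is to adapt Case 1 of the proof of Proposition \ref{Ker2est}, using that the compact support of $X$ in $\varphi(\B_{R/4})$ completely avoids the long-distance Case 2 and also lets us replace $1+K_s$ by $K_s$. The preliminary observation is that, by uniqueness of ODE solutions, the flow of a vector field vanishing outside $\varphi(\B_{R/4})$ cannot push points in $\varphi(\B_{R/4})$ across its boundary (otherwise two distinct trajectories would pass through the same stationary point); in particular, the trajectories $\psi^t(\varphi(x))$ for $x\in \B_{R/4}(0)$ remain inside $\varphi(\B_{R/4})$ for all $t\in \R$, so in local coordinates $\psi^t_p(x)\in \B_{R/4}(0)$. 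This is what lets us keep using the local kernel estimates of Theorem \ref{prop:kern1} at all times.

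For \eqref{cdsacsdc1}, set $K(x',y') := K_s(\varphi(x'), \varphi(y'))$, $k(x',z) := K(x',x'+z)$, and $\xi := (\varphi^{-1})_* X$, with flow $\psi^t_p$ in $\B_R(0)$. For $\ell=1$, the same computation as in Case 1 of Proposition \ref{Ker2est} yields
\begin{equation*}
\frac{d}{dt}\bigg|_{t=0} K(\psi_p^t(x), \psi_p^t(y)) = \partial_{x^\alpha} k(x,y-x)\,\xi^\alpha(x) + \partial_{z^\alpha} k(x,y-x)\,\big(\xi^\alpha(y) - \xi^\alpha(x)\big),
\end{equation*}
and the estimates \eqref{remaining} combined with $|\xi(y)-\xi(x)|\leq \|D\xi\|_\infty |y-x|$ give the bound $C|y-x|^{-(n+s)}$, the Lipschitz difference exactly compensating the extra $|z|^{-1}$ coming from the $z$-derivative. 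For general $\ell$, I induct on $\ell$: the iterated chain rule (Faà di Bruno) expresses $\frac{d^\ell}{dt^\ell}|_{t=0}\, k(\psi_p^t(x), \psi_p^t(y)-\psi_p^t(x))$ as a finite sum of terms of the form
\begin{equation*}
\partial_x^{\gamma}\partial_z^{\beta} k(x, y-x) \cdot P_{\gamma,\beta}(x) \cdot \prod_{j=1}^{|\beta|} \big(\Xi_j(y) - \Xi_j(x)\big),
\end{equation*}
where $P_{\gamma,\beta}$ involves only $\xi$ and its $x$-derivatives at $x$, while each $\Xi_j$ is a smooth vector-valued field built from $\xi$ and its derivatives of order at most $\ell$. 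Each such difference contributes a factor of order $|y-x|$, exactly offsetting the singular factor $|z|^{-|\beta|}$ in $|\partial_x^{\gamma}\partial_z^{\beta} k|\leq C|z|^{-(n+s+|\beta|)}$, yielding $|\cdot|\leq C|y-x|^{-(n+s)}$. One then concludes by Lemma \ref{loccomparability}, which controls $|y-x|^{-(n+s)} \leq C \, K_s(\varphi(x),\varphi(y))$ and equally gives the second inequality of \eqref{cdsacsdc1}.

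The time-$t$ estimate \eqref{cdsacsdc2} follows from the group property $\psi^{t+h} = \psi^h \circ \psi^t$: applying \eqref{cdsacsdc1} at the base points $\psi^t(\varphi(x)), \psi^t(\varphi(y))\in \varphi(\B_{R/4})$ gives
\begin{equation*}
\left|\frac{d^\ell}{dt^\ell} K_s(\psi^t(\varphi(x)), \psi^t(\varphi(y)))\right| \leq C \, K_s(\psi^t(\varphi(x)), \psi^t(\varphi(y))).
\end{equation*}
Specializing to $\ell=1$, the scalar function $f(t) := K_s(\psi^t(\varphi(x)), \psi^t(\varphi(y)))$ satisfies $|f'(t)|\leq C f(t)$, so Grönwall yields $f(t) \leq e^{CT} f(0)$ for $t\in [0,T]$, and \eqref{cdsacsdc2} follows with $C_T := C e^{CT}$. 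The main obstacle is the Faà di Bruno bookkeeping: one must verify that every $\partial_{z^\alpha}$ acting on $k$ really does come paired with a difference $D^k\xi(y) - D^k\xi(x)$, so that each singular $|z|^{-1}$ is absorbed by a Lipschitz gain. Beyond the first derivative, this requires tracking which terms are produced when iterating $\frac{d}{dt}(\psi^t(y)-\psi^t(x)) = \xi(\psi^t(y))-\xi(\psi^t(x))$ and Taylor-expanding $\xi$ at perturbed base points; fortunately, all these evaluations stay uniformly bounded in $C^\ell$ thanks to the flow's invariance of $\varphi(\B_{R/4})$ and the uniform $C^\ell$-bound on $X$ there.
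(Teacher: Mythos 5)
Your proposal is correct and takes essentially the same route as the paper: reduce to Case 1 of the proof of Proposition \ref{Ker2est} (which only uses the local estimates \eqref{remaining} of Theorem \ref{prop:kern1}), invoke Lemma \ref{loccomparability} for the second inequality, then obtain \eqref{cdsacsdc2} from \eqref{cdsacsdc1} via the group property of the flow and a Gr\"onwall-type integration. The paper leaves the Fa\`a di Bruno bookkeeping for general $\ell$ and the invariance of $\varphi(\B_{R/4})$ under the flow implicit, whereas you spell both out, but there is no difference in the underlying argument.
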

    \begin{proof}
By scaling, we can assume $R=1$. The second inequality in both \eqref{cdsacsdc1} and \eqref{cdsacsdc2} then follows from Lemma \ref{loccomparability}. As for the first inequality of \eqref{cdsacsdc1}, it follows from the proof of Case $1$ in Proposition \ref{Ker2est}, since it only depends on local estimates for $X$. Finally, \eqref{cdsacsdc2} can be deduced from \eqref{cdsacsdc1}. Indeed, note that for all $1 \le k \le \ell $ and $0\le t\le T$,
\begin{equation}\label{stghsdfgh}
    \left|\frac{d^k}{d t^k} K_s(\psi^t(\varphi(x)),\psi^t(\varphi(y)))  \right| = \left|\frac{d^k}{d r^k} \bigg|_{r=0} K_s(\psi^{t+r}(\varphi(x)),\psi^{t+r}(\varphi(y))) \right| \leq  C_0 K_s(\psi^t(\varphi(x)),\psi^t(\varphi(y)))  \,,
\end{equation}
with $C_0=C_0(n,s, \|X \|_{C^\ell(\varphi(\B_1))})$. Thus, we are only left with proving that
\begin{equation*}
    K_s(\psi^t(\varphi(x)),\psi^t(\varphi(y))) \le C_T K_s(\varphi(x),\varphi(y))
\end{equation*}
for some $C_T=C_T(n,s, T, \|X \|_{C^\ell(\varphi(\B_1))})$. But this follows itself from \eqref{stghsdfgh}, with $k=1$, since we can write the inequality as
$$
\frac{d}{dt} \big[e^{-C_0t}K_s(\psi^t(\varphi(x)),\psi^t(\varphi(y)))\big]\leq 0\, ,
$$
and integrating we find that
\begin{equation*}
    K_s(\psi^t(\varphi(x)),\psi^t(\varphi(y))) \le e^{C_0 T} K_s(\varphi(x),\varphi(y))
\end{equation*}
for every $0\le t \le T$. 
 \end{proof}

Proposition  \ref{Ker2est} can be used to bound time derivatives of the energy of ``flown objects'' by their energy at time zero. We show this for the fractional Sobolev energy:
\begin{lemma}\label{enboundslemma}
Let $s \in (0,2)$ and $ v\in H^{s/2}(M)$ be a function with $|v|\leq 1$. Let $X \in \mathfrak{X} (M)$ be a smooth vector field and $v_t := v \circ \psi^{-t}$, where $\psi^t$ is the flow of $X$ at time $t$. Then, for all $T>0$ there holds
\begin{equation*}
    \sup_{0<t<T} \bigg| \frac{d^\ell}{d  t^\ell}\mathcal{E}_{M}(v_t) \bigg|\leq C\big(1+ \mathcal{E}_{M}(v)\big)\, ,
\end{equation*}
for some constant $C=C(M,s,\ell,T, \max_{0\le k\le \ell} \|\nabla^k X\|_{L^\infty(M)} )$ which stays bounded for $s$ away from $0$ and $2$..
\end{lemma}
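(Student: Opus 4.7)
The plan is to change variables in the double integral so that the entire $t$-dependence is moved onto the kernel and Jacobian factors, and then differentiate under the integral sign using Proposition \ref{Ker2est}. Concretely, substituting $p=\psi^t(p')$ and $q=\psi^t(q')$ in the definition of $\mathcal{E}_M(v_t)$ gives
\begin{equation*}
\mathcal{E}_M(v_t)=\iint_{M\times M}(v(p')-v(q'))^2\,\widetilde{K}_t(p',q')\,dV_{p'}\,dV_{q'},
\end{equation*}
where $\widetilde{K}_t(p',q'):=K_s(\psi^t(p'),\psi^t(q'))\,J_t(p')J_t(q')$ and $J_t$ is the Jacobian of $\psi^t$, which is smooth in $(t,p')\in[0,T]\times M$.

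The next step is to establish the pointwise bound
\begin{equation*}
\sup_{0\le t\le T}\bigg|\frac{d^k}{dt^k}\widetilde{K}_t(p',q')\bigg|\le C_T\bigl(1+K_s(p',q')\bigr)\quad\text{for all } 0\le k\le \ell.
\end{equation*}
The factors $J_t$ and all their time derivatives are uniformly bounded on $[0,T]\times M$, so it suffices to prove the analogous estimate for $K_s(\psi^t(p'),\psi^t(q'))$. Proposition \ref{Ker2est} yields this estimate at $t=0$. Using the time-shift identity $\psi^{\tau+t_0}=\psi^\tau\circ\psi^{t_0}$ and applying the same proposition at the base point $(\psi^{t_0}(p'),\psi^{t_0}(q'))$ gives
\begin{equation*}
\bigg|\frac{d^k}{dt^k}\bigg|_{t=t_0}K_s(\psi^t(p'),\psi^t(q'))\bigg|\le C\bigl(1+K_s(\psi^{t_0}(p'),\psi^{t_0}(q'))\bigr).
\end{equation*}
Specializing to $k=1$ and setting $f(t):=1+K_s(\psi^t(p'),\psi^t(q'))$ gives $|f'(t)|\le Cf(t)$, so Gronwall's inequality yields $f(t)\le e^{CT}f(0)$ for $t\in[0,T]$; feeding this back into the displayed inequality for general $k$ gives the claim.

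Finally, I would justify differentiating $\ell$ times under the integral by dominated convergence. Since $|v|\le 1$, we have $(v(p')-v(q'))^2\le 4$, hence
\begin{equation*}
\bigg|\frac{d^k}{dt^k}\bigl[(v(p')-v(q'))^2\widetilde{K}_t(p',q')\bigr]\bigg|\le C_T\bigl(4+(v(p')-v(q'))^2 K_s(p',q')\bigr),
\end{equation*}
which is a $t$-independent majorant whose double integral is bounded by $C_T(4|M|^2+\mathcal{E}_M(v))\le C(1+\mathcal{E}_M(v))$. This licenses interchanging $\ell$ differentiations with the integral, and applying the same bound with $k=\ell$ yields the conclusion. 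The principal obstacle is the propagation of the estimate from $t=0$, where Proposition \ref{Ker2est} directly applies, to a uniform bound on the whole interval $[0,T]$. The Gronwall step above handles this, and it crucially relies on the fact that the constant in Proposition \ref{Ker2est} depends on the vector field only through $\|X\|_{C^\ell(M)}$, which is unchanged under translation of time along the flow; alternatively one could invoke the explicit estimate \eqref{cdsacsdc2} of Proposition \ref{sdfgsdrgs} after decomposing $X$ via a partition of unity subordinate to flatness balls.
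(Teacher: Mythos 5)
Your proposal is correct and follows essentially the same path as the paper's proof: change variables via the flow to transfer the $t$-dependence to the kernel and Jacobian, bound the Jacobian derivatives uniformly, invoke Proposition \ref{Ker2est} combined with the semigroup identity $\psi^{t_0+r}=\psi^{r}\circ\psi^{t_0}$ to get the derivative estimate at arbitrary $t_0$, propagate it uniformly over $[0,T]$ by the Gronwall argument (the paper does exactly this, referring to the integration step in Proposition \ref{sdfgsdrgs}), and conclude by dominated convergence using $|v|\leq 1$. The only small omission is that $\mathcal{E}_M$ in the paper's proof also carries a potential term $\int_M \ep^{-s}W(v)\,dV$; that piece is disposed of in one line after the same change of variables, since only the Jacobian of the flow contributes and it is uniformly smooth on $[0,T]\times M$.
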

%\begin{lemma}\label{enboundslemmaloc}
%Let $v\in H^{s/2}(M)$ be a function with $|v|\leq 1$. Let $X$ be a vector field on $M$, supported on $\varphi(B_r)\subset\varphi(B_1)$ {\color{green} Flatness assumptions}. Then there holds
%\begin{equation*}
  %  \sup_{0<t<T} \bigg| \frac{d^\ell}{d  t^\ell}\mathcal{E}_{M}(v_t) \bigg|\leq C(r^{n-s}+\mathcal{E}_{\varphi(B_r)}(v))\, ,
%\end{equation*}
%where $C$ depends only on $s$, $k$, and $\max_{0\le \ell\le k} \|\nabla^\ell X\|_{L^\infty(M)}$ (but not on $M$). {\color{green} State dependency as being on $X$ and its derivatives in the chart as well as on the derivatives of the metric??}
%\end{lemma}

\begin{proof}
Let $C$ denote a constant that depends only on $M$, $s$, $\ell$, $T$ and  $ \max_{0\le k\le \ell} \|\nabla^k X\|_{L^\infty(M)}$. 

The idea of the proof is to change variables using the flow $\psi^t$ in the corresponding integrals defining the Allen-Cahn energy, and after that to exchange integration and differentiation. 

Let us start with the Sobolev part of the energy. We have, denoting by $J_t$ the Jacobian of the flow:
\begin{align}
\frac{d^\ell}{d t^\ell}\mathcal{E}^{\rm Sob}_{M}({v_t})&=\frac{d^\ell}{d t^\ell}\iint |v(\psi^{- t}(p))-v(\psi^{- t}(q))|^2K_s(p,q) \, dV_p\, dV_q\nonumber\\
&=\frac{d^\ell}{d t^\ell}\iint |v(p)-v(q)|^2 K_s(\psi^{ t}(p),\psi^{ t}(q))\,J_{ t}(p)J_{ t}(q) \, dV_p\, dV_q \nonumber  \\
&=\iint |v(p)-v(q)|^2\frac{d^\ell}{d t^\ell}\Big[K_s(\psi^{ t}(p),\psi^{ t}(q)) \,J_{ t}(p)J_{ t}(q)\Big]  dV_p\, dV_q\label{eqcvthm}.
\end{align}
Since $0 < t < T$, the derivatives in time of the Jacobians $J_t$ can of course be bounded by a constant $C$ with the right dependencies. What remains in order to bound \eqref{eqcvthm} by $C(1+\mathcal{E}^{\rm Sob}_M(v))$ is to control the first $k$-th derivatives in time of $K_s(\psi^{ t}(p),\psi^{ t}(q))$ by $C (1+K_s(p,q))$, for all $0<t<T$. The main bound is given by Proposition \ref{Ker2est}, which gives for all $1 \le k \le \ell $:
\begin{equation}\label{rderest}
    \left|\frac{d^k}{d t^k} K_s(\psi^{ t}(p),\psi^{ t}(q)) \right| = \left|\frac{d^k}{d r^k} \bigg|_{r=0} K_s(\psi^{ t+r}(p),\psi^{ t+r}(q)) \right| \leq  C \big(1+K_s(\psi^{ t}(p),\psi^{ t}(q))\big)\,.
\end{equation}
Now, integrating this inequality for $k=1$ similarly to how we proceeded in the proof of Lemma \ref{sdfgsdrgs}, we conclude that
\begin{equation}\label{kiniest}
    K_s(\psi^{ t}(p),\psi^{ t}(q))\leq C (1+K_s(p,q)) ,\s \text{ for all } 0< t<T\, .
\end{equation}
We can now go back to \eqref{eqcvthm} and apply the bounds that we just derived. We get that
\begin{align*}
\left| \frac{d^\ell}{d  t^\ell}\mathcal{E}^{\rm Sob}_{M}(v_t) \right|
& \leq \iint |v(p)-v(q)|^2\frac{d^\ell}{d t^\ell}\Big[K_s(\psi^{ t}(p),\psi^{ t}(q)) \,J_{ t}(p)J_{ t}(q)\Big]  dV_p\, dV_q \\ 
&\leq C \iint |v(p)-v(q)|^2 (1+K_s(p,q)) \, dV_p dV_q\\
&= C(1+\mathcal{E}^{\rm Sob}_{M}(v))
\end{align*}
for all $0< t<T$, where $C$ has the right dependencies.

\vsp
The potential part of the energy is simpler to deal with. Indeed, we have 
\begin{equation}\label{potderpart}
\frac{d^\ell}{d t^\ell}\mathcal{E}^{\rm Pot}_{M}({v_t}) =\frac{d^\ell}{d t^\ell}\int \ep^{-s} W(v(\psi^{- t}(p)) dV_p\, =  \int \ep^{-s} W(v(p))\frac{d^\ell}{d t^\ell}\,J_{ t}(p)  dV_p,
\end{equation}
from which we directly conclude that
\[
\left| \frac{d^\ell}{d t^\ell}\mathcal{E}^{\rm Pot}_{M}({v_t}) \right| \le C\mathcal{E}^{\rm Pot}_{M}({v}),
\]
finishing the proof.
\end{proof}

Lemma \ref{enboundslemma} has a local version, which comes from applying local estimates for the kernel instead.
\begin{lemma}\label{enboundslemma2}
Let $M$ satisfy the flatness assumptions ${\rm FA}_\ell(N,g,p,R,\varphi)$. Let $s \in (0,2)$ and $ v\in H^{s/2}(M)$ be a function with $|v|\leq 1$. Let $X \in \mathfrak{X} (M)$ be a smooth vector field supported on $\varphi(\B_{R/2})$, and put $v_t := v \circ \psi_X^{-t}$, where $\psi_X^t$ is the flow of $X$ at time $t$. Then, for all $T>0$ there holds
\begin{equation*}
    \sup_{0< t<T} \left| \frac{d^\ell}{d  t^\ell}\mathcal{E}_{\varphi(\B_{R/2})}(v_{t}) \right|\leq C(1+\mathcal{E}_{\varphi(\B_{R/2})}(v))\, ,
\end{equation*}
for some constant $C=C(s,\ell,T, \max_{0\le k\le \ell} \|\nabla^k X\|_{L^\infty(\varphi(\B_{R/2}))} )$ which stays bounded for $s$ away from $0$ and $2$.
\end{lemma}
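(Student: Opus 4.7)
The plan is to mirror the proof of Lemma \ref{enboundslemma}, substituting the sharper local kernel bounds of Proposition \ref{sdfgsdrgs} (and, for interactions crossing the support of $X$, Theorem \ref{prop:kern1}) in place of the global bound \eqref{kerd2est}. The key structural observation is that, since $X$ is compactly supported in the interior of $\Omega:=\varphi(\B_{R/2})$, its flow $\psi^t$ is the identity on $M\setminus\Omega$ and preserves $\Omega$ as a set; hence the change of variables $p\mapsto \psi^t(p)$ only moves points of $\Omega$. This is what makes the constant depend on purely local data.

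I would split the Sobolev part $\mathcal E^{\mathrm{Sob}}_\Omega(v_t)$ into an intra-$\Omega$ contribution over $\Omega\times\Omega$ and a cross contribution over $\Omega\times\Omega^c$. For the intra-$\Omega$ piece, change variables in both arguments and exchange derivative and integral (as in \eqref{eqcvthm}); then by Proposition \ref{sdfgsdrgs} applied at the appropriate scale one obtains the pointwise bound
\[
\Big|\tfrac{d^k}{dt^k}\big[K_s(\psi^t(p),\psi^t(q))\,J_t(p)J_t(q)\big]\Big|\le C\,K_s(p,q),
\]
which integrates to $C\,\mathcal E^{\mathrm{Sob}}_\Omega(v)$. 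For the cross piece, only the first argument is moved by the flow, so the time derivatives of $K_s(\psi^t(p),q)$ expand (via the chain rule) into spatial derivatives of $K_s$ in its first variable pulled back along $\psi^t$; these are controlled pointwise by \eqref{remaining} and \eqref{remaining2} of Theorem \ref{prop:kern1} (depending on whether $q$ is near or far from $\Omega$), integrated against $dV_q$ using \eqref{remaining3}, and compared with $K_s(p,q)$ via Lemma \ref{loccomparability}. The potential term is handled exactly as in \eqref{potderpart}: after the change of variables, only the Jacobian depends on $t$, and its $C^\ell$-norm is bounded in terms of $\|X\|_{C^\ell}$ and $T$, so $\big|\tfrac{d^\ell}{dt^\ell}\int_\Omega F(v_t)\big|\le C\int_\Omega F(v)$.

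The main technical wrinkle is a mild mismatch of scales: Proposition \ref{sdfgsdrgs} is formulated with $X$ supported in $\varphi(\B_{R/4})$, whereas here the support is in $\varphi(\B_{R/2})$. This is harmless, since the flatness assumption ${\rm FA}_\ell(M,g,R,p,\varphi)$ can be combined with Remark \ref{flatscalingrmk} to view $\varphi$ as furnishing flatness at a rescaled chart of suitable radius; equivalently, one can re-run the short argument of Proposition \ref{sdfgsdrgs} directly on $\B_{R/2}$, since all of its inputs (the estimates of Theorem \ref{prop:kern1} on $\B_{R/2}$, valid whenever flatness holds at scale $R$) are still in force. The additive $+1$ in the final bound $C(1+\mathcal E_{\varphi(\B_{R/2})}(v))$ absorbs the zeroth-order contributions arising from the Jacobian terms and from the cross piece when $v$ is constant on $\Omega$.
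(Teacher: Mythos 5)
Your decomposition of the Sobolev term into $\Omega\times\Omega$ and $\Omega\times\Omega^c$ differs from the paper's and introduces a genuine gap in the cross term. The paper (after scaling to $R=1$) splits the support of the integrand using an intermediate ball: the ``near'' piece has $p,q\in\varphi(\B_{2/3})$ (so Proposition \ref{sdfgsdrgs} applies with both arguments flowing), and the ``far'' piece has $p\in\varphi(\B_{1/2})$ and $q\notin\varphi(\B_{2/3})$, so the points are separated by a fixed gap of width $1/6$; there the kernel and all its $x$-derivatives are uniformly bounded and the $q$-integral is controlled by a rescaled \eqref{remaining3}. In your cross term $\Omega\times\Omega^c$, by contrast, $p$ and $q$ may both lie arbitrarily close to $\partial\Omega$. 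Expanding $\tfrac{d}{dt}K_s(\psi^t(p),q)$ by the chain rule gives $\nabla_xK_s(\psi^t(p),q)\cdot X(\psi^t(p))$; since $\nabla_xK_s=(\partial_x-\partial_z)k$, the $\partial_z k$ term in \eqref{remaining} yields $|\nabla_xK_s|\lesssim d(p,q)^{-(n+s+1)}$. Multiplied by $|X|\le\|X\|_{L^\infty}$ and integrated against $|v(p)-v(q)|^2\le 4$ over $\Omega\times\Omega^c$, this diverges: the resulting integral behaves like $\int_\Omega d(p,\partial\Omega)^{-(s+1)}dV_p=\infty$. So the pointwise bound you cite is one power too singular to compare with $K_s(p,q)$ and cannot close the estimate.

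The missing ingredient is the same cancellation that powers Proposition \ref{Ker2est}: since $X\equiv0$ on $\Omega^c$, one has $|X(p')|=|X(p')-X(q)|\le\|\nabla X\|_{L^\infty}\,d(p',q)$ for every $p'\in\Omega$, $q\in\Omega^c$, and this extra factor of $d(p,q)$ restores the bound $|\tfrac{d}{dt}K_s(\psi^t(p),q)|\lesssim K_s(p,q)$. Your write-up never invokes this (it only refers to \eqref{remaining}, \eqref{remaining2}, \eqref{remaining3} and Lemma \ref{loccomparability}), so as stated the cross piece fails. Either add this Lipschitz-cancellation argument explicitly, or adopt the paper's annular decomposition, whose cross term sidesteps the singularity entirely. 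Your treatment of the intra-$\Omega$ piece, the potential term, and the $R/4$ vs.\ $R/2$ rescaling remark are all fine.
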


\begin{proof}
%We change variables (using the flow $\psi_j^t(x)$) in the corresponding integral for the energy, and after that we exchange integration and differentiation.
We modify the proof of Lemma \ref{enboundslemma} accordingly. First, by scaling, it suffices to prove the Lemma in the case $R=1$. Since $X$ is supported on $\varphi(\B_{1/2})$, the integrand in \eqref{eqcvthm} is supported then on
\begin{align*}
&(N\times N)\setminus (\varphi(\B_{1/2})^c\times\varphi(\B_{1/2})^c)=\Big[(\varphi(\B_{2/3})\times\varphi(\B_{2/3}))\setminus (\varphi(\B_{1/2})^c\times\varphi(\B_{1/2})^c)\Big]\cup\\
&\hspace{6cm} \cup\Big[(\varphi(\B_{1/2})\times (N\setminus\varphi(\B_{2/3})))\cup ((N\setminus\varphi(\B_{2/3}))\times\varphi(\B_{1/2}))\Big]\, ,
\end{align*}
so that
% It suffices to prove the Lemma in the case $R=1$. As usual, $K(p,q)$ will denote the kernel of the fractional Laplacian on $N$. We can write
% \begin{align*}
% \mathcal{E}_{N}^{\rm Sob}(u_{ t})
% &=\iint_{N\times N} |u(\psi_Y^{- t}(p))-u(\psi_Y^{- t}(q))|^2K(p,q) \, dV_p\,dV_q
% \end{align*}
% Changing variables with the flow $\psi_Y^t$ of $Y$, and denoting by $J_t$ the Jacobian of the flow, we get
% \begin{align*}
% \mathcal{E}_{N}^{\rm Sob}(u_{t})&=\iint_{N\times N} |u(p)-u(q)|^2K(\psi_Y^{ t}(p),\psi_Y^{ t}(q)) J_{ t}(p)J_{ t}(q)\, dV_p\,dV_q\, , \nonumber
% \end{align*}
% thus
% \begin{align*}
% \frac{d^k}{dt^k}\mathcal{E}_{N}^{\rm Sob}(u_{t})&=\iint_{N\times N} |u(p)-u(q)|^2\frac{d^k}{dt^k}\Big[K(\psi_Y^{ t}(p),\psi_Y^{ t}(q)) J_{ t}(p)J_{ t}(q)\Big]\, dV_p\,dV_q\, . \nonumber
% \end{align*}

% Given that $Y$ has support in $\varphi(\B_{R})$, the integrand above is supported on
% \begin{align*}
% (N\times N)\setminus (\varphi(\B_{1/2})^c\times\varphi(\B_{1/2})^c)=&[(\varphi(\B_{2/3})\times\varphi(\B_{2/3}))\setminus (\varphi(\B_{1/2})^c\times\varphi(\B_{1/2})^c)]\\
% &\cup[(\varphi(\B_{1/2})\times (N\setminus\varphi(\B_{2/3})))\cup ((N\setminus\varphi(\B_{2/3}))\times\varphi(\B_{1/2}))]\, .
% \end{align*}
\begin{align*}
\bigg| & \frac{d^k}{d t^k}  \mathcal{E}_{M}^{\rm Sob}(v_{t})\bigg|=\bigg|\frac{d^k}{d t^k}\mathcal{E}_{\varphi(\B_{1/2})}^{\rm Sob}(v_{t})\bigg|\\
&=\bigg|\iint_{(\varphi(\B_{2/3})\times\varphi(\B_{2/3}))\setminus (\varphi(\B_{1/2})^c\times\varphi(\B_{1/2})^c)} |v(p)-v(q)|^2\frac{d^k}{d t^k}\Big[K(\psi_Y^{ t}(p),\psi_Y^t(q))J_{ t}(p)J_{ t}(q)\Big]\,dV_p\,dV_q \\
&\hspace{0.6cm}+2\iint_{\varphi(\B_{1/2})\times (N\setminus\varphi(\B_{2/3}))} |v(p)-v(q)|^2\frac{d^k}{d t^k}\Big[K(\psi_Y^{ t}(p),q)J_{ t}(p)\Big]\,dV_p\,dV_q\bigg| \\
&\leq C\iint_{(\B_{2/3}\times\B_{2/3})\setminus (\B_{1/2}^c\times\B_{1/2}^c)} |v(\varphi(x))-v(\varphi(y))|^2\Big|\frac{d^k}{d t^k}\Big[K(\varphi(\psi_X^{ t}(x)),\varphi(\psi_X^{ t}(y)))J_{ t}(\varphi(x))J_{ t}(\varphi(y))\Big]\Big|\,dx\,dy\\
&\hspace{0.6cm}+C\iint_{\B_{1/2}\times (N\setminus\varphi(\B_{2/3}))} |v(\varphi(x))-v(q)|^2\Big|\frac{d^k}{d t^k}\Big[K(\varphi(\psi_X^{ t}(x)),q)J_{ t}(\varphi(x))\Big]\Big|\,dx\,dV_q\, .
\end{align*}
Bounding the derivatives in time of the Jacobians by a constant with the right dependencies, using \eqref{cdsacsdc1} to bound the kernel in the first double integral, and using \eqref{remaining3} to bound the integral in $q$ in the second double integral by a constant, we conclude that
\begin{equation*}
    \left| \frac{d^k}{d  t^k}\mathcal{E}_{\varphi(\B_{1/2})}^{\rm Sob}(v_{t})\right|\leq C(1+\mathcal{E}_{\varphi(\B_{1/2})}^{\rm Sob}(v))\, .
\end{equation*}
Regarding the potential part of the energy, from the computation in \eqref{potderpart} we readily find that
\begin{equation}\label{potabbound}
\left| \frac{d^\ell}{d t^\ell}\mathcal{E}^{\rm Pot}_{\varphi(\B_{1/2})}({v_t}) \right| \le C\mathcal{E}^{\rm Pot}_{\varphi(\B_{1/2})}({v})
\end{equation}
where $C$ has the right dependencies, which completes the proof.
\end{proof}

\subsection{Estimates for the extension problem}\label{AppX}

\begin{lemma}\label{linf-grad} Let $ s\in (0,2)$ and $M$ satisfy the flatness assumption ${\rm FA}_2(M,g,2,p,\varphi)$. Let also $U : \ov{B}^+_2(p,0) \to \R $ be any function solving
\begin{equation*}
    \widetilde{\textnormal{div}}(z^{1-s} \widetilde \nabla U) =0 \s in \s \ov{B}^+_2(p,0) \,,
\end{equation*}
and let $u$ be its trace on $B_2(p)$. Assume also $U \in L^{\infty}(\ov{B}^+_2(p,0))$ and $u=0$ in $B_{3/2}(p)$. Then there exists $C=C(n)>0$ such that
\begin{equation*}
    z^{1-s}| \widetilde \nabla U (q,z) | \le \frac{C}{s} \| U \|_{L^\infty(\widetilde B^+_{6/5}(p,0) )} \,.
\end{equation*}
for every $(q,z) \in \widetilde B_1^+(p,0)$.
\end{lemma}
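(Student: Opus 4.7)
I would prove the estimate in two main steps: a pointwise boundary decay estimate of the form $|U(q,z)| \le (C/s)\|U\|_{L^\infty(\widetilde B^+_{6/5})}\, z^s$, established by a barrier and maximum-principle argument, followed by a standard interior gradient estimate for the degenerate-elliptic equation that combines with it to give the claim. Throughout I would work in the coordinate chart $\varphi$ from the flatness assumption ${\rm FA}_2(M,g,2,p,\varphi)$: by \eqref{hsohoh1}--\eqref{hsohoh2} the metric is a small perturbation of the Euclidean product metric (with $C^1$-small coefficients), so $\widetilde{\rm div}(z^{1-s}\widetilde\nabla U)=0$ is, in these coordinates, a small and controlled perturbation of the flat equation ${\rm div}(z^{1-s}\nabla U)=0$, which admits the exact one-dimensional solution $z^s/s$. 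It is precisely this solution $z^s/s$ that will produce the $1/s$ factor in the claimed bound.

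\vsp
\textbf{Step 1 (boundary decay).} Setting $M_0:=\|U\|_{L^\infty(\widetilde B^+_{6/5}(p,0))}$, I would establish
\[
|U(q,z)| \le \frac{C(n)}{s}\, M_0\, z^s \qquad \text{for all } (q,z)\in \widetilde B^+_{11/10}(p,0),
\]
by constructing a supersolution of the equation that dominates $|U|$ on the boundary of $\widetilde B^+_{11/10}(p,0)$. Since $U\equiv 0$ on the flat part $\{z=0\}\cap B_{3/2}(p)$ and $|U|\le M_0$ on the upper spherical part $\partial^+\widetilde B^+_{11/10}(p,0)\subset \widetilde B^+_{6/5}(p,0)$, a natural barrier is
\[
\Psi(q,z) := \frac{C_1 M_0}{s}\, z^s + C_2 M_0\, \phi(q,z),
\]
where $\phi$ is a nonnegative supersolution in $\widetilde B^+_{11/10}(p,0)$ vanishing on the flat part, bounded below by $1$ on $\partial^+\widetilde B^+_{11/10}(p,0)$, and satisfying $\phi(q,z)\le C(n)\, z^s$ near the vertex $(p,0)$. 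Such a $\phi$ can be built, for example, by solving the Dirichlet problem on the slightly larger half-ball $\widetilde B^+_{6/5}(p,0)$ with boundary data $0$ on the flat part and $1$ on the upper part; its $z^s$-decay near $(p,0)$ follows from a boundary Hopf-type estimate for the weighted equation obtained via comparison with the one-dimensional solution $z^s$. The maximum principle applied to $\Psi\pm U$ on $\widetilde B^+_{11/10}(p,0)$ then yields the required pointwise bound.

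\vsp
\textbf{Step 2 (interior gradient estimate) and main obstacle.} Fix $(q_0,z_0)\in \widetilde B^+_1(p,0)$ with $z_0>0$. On the ball $B_{z_0/4}(q_0,z_0)\subset \widetilde B^+_{11/10}(p,0)$ the weight $z^{1-s}$ is comparable (with dimensional ratios) to $z_0^{1-s}$, so after dividing the equation by $z^{1-s}$ it becomes uniformly elliptic with smooth, bounded coefficients. Rescaling $\widehat U(x,\tau):=U(q_0+z_0 x,z_0\tau)$ to the unit ball and applying standard interior gradient estimates gives $z_0|\widetilde\nabla U(q_0,z_0)|\le C(n)\|U\|_{L^\infty(B_{z_0/4}(q_0,z_0))}$. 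Combined with Step~1 and the bound $z\le 5z_0/4$ on $B_{z_0/4}(q_0,z_0)$, this yields $z_0|\widetilde\nabla U(q_0,z_0)| \le (C(n)/s)\, M_0\, z_0^s$, which rearranges to the claim. The main technical obstacle is the barrier construction in Step~1, and in particular the Hopf-type bound $\phi(q,z)\le C(n)\, z^s$: this must be proved for the perturbed, manifold version of the equation while keeping the constants purely dimensional (and independent of $s$), which requires checking that the lower-order corrections coming from ${\rm FA}_2$ do not destroy the supersolution property of the barrier and do not interact badly with the degeneracy of the weight $z^{1-s}$ at $z=0$.
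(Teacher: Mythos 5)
Your architecture (pointwise decay by a barrier plus interior gradient estimates) matches the paper, and your Step~2 is essentially identical to the paper's second step. The gap is in Step~1. Your proposed barrier $\Psi=\frac{C_1M_0}{s}z^s+C_2M_0\phi$ vanishes identically on the flat boundary $\{z=0\}$, so along $\partial^+\widetilde B^+_{11/10}(p,0)$ near the equator $\{z=0\}\cap\partial B_{11/10}(p)$ we have $\Psi\to 0$; yet $|U|$ is only known to be $\le M_0$ there, and no a priori decay rate for $U$ near the equator is available at this stage. In fact the conditions you impose on $\phi$ --- vanishing on the flat part and $\ge 1$ on all of $\partial^+\widetilde B^+_{11/10}$ --- are mutually incompatible near the equator by continuity, so no such $\phi$ exists. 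Moreover, the key ``Hopf-type'' upper bound $\phi(q,z)\le C(n)z^s$ with $s$-uniform constant is not a standard boundary Hopf statement (which gives lower bounds); proving it is exactly the same kind of estimate as the lemma itself applied to $\phi$, so your plan shifts the entire difficulty into that unproved step.

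The paper sidesteps both problems with a single explicit barrier. It fixes the evaluation point $q_\circ$ and takes
\[
b_{q_\circ}(q,z):=\tfrac{\alpha}{2}\,|\varphi^{-1}(q)-\varphi^{-1}(q_\circ)|^2-\beta\,(z^2-2z^s),\qquad \beta=C\alpha/s .
\]
The identities $\big(\partial_{zz}+\tfrac{1-s}{z}\partial_z\big)z^2=2s$ and $\big(\partial_{zz}+\tfrac{1-s}{z}\partial_z\big)z^s=0$ give $\widetilde{\rm div}(z^{1-s}\widetilde\nabla b_{q_\circ})\le z^{1-s}(C\alpha-2s\beta)\le 0$, which is where the factor $1/s$ enters. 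Crucially the quadratic term, centered at $q_\circ\in B_{11/10}(p)$, is bounded below by a dimensional multiple of $\alpha$ on the part of $\partial^+\widetilde B^+_{6/5}(p,0)$ where $z$ is small (since those points are at Euclidean distance comparable to $1$ from $q_\circ$), and the $\beta(2z^s-z^2)$ term takes over where $z$ is bounded away from $0$; so $b_{q_\circ}\ge M_0$ on all of $\partial^+\widetilde B^+_{6/5}$ for $\alpha=CM_0$. The maximum principle then yields $|U(q_\circ,z)|\le b_{q_\circ}(q_\circ,z)\le 2\beta z^s=\tfrac{C}{s}M_0z^s$, after which your Step~2 finishes the proof. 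To repair your argument you would essentially have to add such a quadratic term to $\Psi$, at which point the auxiliary function $\phi$ and the Hopf-type estimate become unnecessary.
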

\begin{proof}
Let $C$ denote a constant that depends only on $n$. This estimate is proved by a barrier argument. Let $\alpha, \beta >0 $ to be chosen later, and for $q_\circ \in B_{11/10}(p) $ define
\begin{equation*} 
    b_{q_\circ}(q,z) := \frac{\alpha}{2} \big|\varphi^{-1}(q) - \varphi^{-1}(q_\circ) \big|^2- \beta ( z^2 - 2 z^s) \,.
\end{equation*}
Denote by $\Delta_g$ the Laplace-Beltrami operator of $(M,g)$. Then, by ${\rm FA}_2(M,g,2,p,\varphi)$, for $(x,z) \in \widetilde B_{6/5}^+$ there holds $| \Delta_g b_{q_\circ} (q,z)| \le C$. Moreover
\begin{equation*}
    \big( \partial_{zz} +\tfrac{1-s}{z} \partial_z \big)z^2 =2s \s\s \textnormal{and} \s\s \big( \partial_{zz} +\tfrac{1-s}{z} \partial_z \big)z^s =0 \,.
\end{equation*}
Hence 
\begin{equation*}
    \widetilde \dive (z^{1-s} \widetilde \nabla b_{q_\circ} ) = z^{1-s} \left( \Delta_g b_{q_\circ} + \partial_{zz} b_{q_\circ} + \frac{1-s}{z} \partial_z b_{q_\circ} \right) \le z^{1-s}(C \alpha - 2 s \beta  ) \le 0 \,,
\end{equation*}
provided we take $\beta = C \alpha/s$.

\vspace{2pt}
Since $U =0$ in $B_{3/2}(p) \times \{0\}$ clearly $  |U| \le b_{q_\circ}(\cdot,0) $ in $B_{6/5}(p) \times \{0\}$ . Moreover, for every $(x,z) \in \partial ^+ \widetilde B^+_{6/5}(p,0) $ there holds
\begin{equation*}
    b_{q_\circ}(q,z) \ge C\alpha - \beta( z^2-2 z^s) \ge C\alpha \ge \| U \|_{L^\infty(\widetilde B^+_{6/5}(p,0) )} \ge U(x,z) \,,
\end{equation*}
provided we choose $\alpha= C\| U \|_{L^\infty(\widetilde B^+_{6/5}(p,0) ) } $. 

\vspace{2pt}
Hence, with this choice of $\alpha$ and $\beta$, $ |U| \le b_{q_\circ}$ on the full boundary $\partial \widetilde B_{6/5}^+(p,0) $. Since also $U$ solves $\widetilde{\textnormal{div}}(z^{1-s} \widetilde \nabla U) =0$ in $\widetilde B_{6/5}^+(p,0)$, by the maximum principle we get
\begin{equation}\label{aqwefdevadrgadf}
   |U (q_\circ,z)| \le b_{q_\circ}(q_\circ,z) \le \frac{Cz^s}{s} \| U \|_{L^\infty(\widetilde B^+_{6/5}(p,0))}\,,
\end{equation}
for $(q_\circ,z) \in \widetilde B^+_{11/10}(p,0)$. Moreover, by standard (interior) gradient estimates for uniformly elliptic equations, for all $(q,z) \in \widetilde B^+_1(p,0)$ we have
\begin{equation*}
    | \widetilde \nabla U (q,z) | \le \|\widetilde \nabla U \|_{L^{\infty}(\widetilde B^+_{z/100}(q,z))} \le \frac{C}{z} \| U\|_{L^{\infty}(\widetilde B^+_{z/50}(q,z))} \,,
\end{equation*}
which, since $\widetilde B^+_{z/50}(q,z) \subset \widetilde B^+_{11/10}(p,0)$, together with \eqref{aqwefdevadrgadf} implies
\begin{equation*}
    | \widetilde \nabla U (x,z) | \le \frac{C}{s} z^{s-1} \| U \|_{L^\infty(\widetilde B^+_{6/5})} \,,
\end{equation*}
and this concludes the proof.
\end{proof}

\begin{lemma}\label{lem:whtorwohh0}
Let $s_0 \in (0,2)$, $s \in (s_0,2)$. Consider the Riemannian manifold $(\R^n,g)$ with $\big(1-\tfrac 1 {100}\big) |v|^2 \le g_{ij}(x)v^i v^j \le \big(1+\tfrac 1 {100}\big)|v|^2$ and  $\|g_{ij}\|_{C^{1,1}(\overline \R^n)} \le 1$.
Let also $u:\R^n\to [-1,1]$ and  $U : \R^{n}\times \R_+ \to [-1,1]$ be the extension of $u$ (in the sense of Theorem \ref{extmfd}). Then
\begin{equation}
 \int_{\B_{1}^+(0,0)}  |\widetilde \nabla  U|^2 z^{1-s}dVdz \le C  \iint_{\R^n\times \R^n \setminus (\B_2^c \times \B_2^c)} \frac{|u(x) -
u(y)|^2}{|x-y|^{n+s}}  \,dxdy,
\end{equation}
with $C$ depending only on $n$ and $s_0$.
\end{lemma}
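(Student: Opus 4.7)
The plan is to combine a Caccioppoli-type energy estimate for the degenerate elliptic equation $\widetilde{\textnormal{div}}(z^{1-s}\widetilde \nabla U)=0$ with careful estimates showing that the resulting lower-order contributions are bounded by the semilocal seminorm. Since both sides of the asserted inequality are invariant under $u\mapsto u-c$ (the extension $U$ shifts by the same constant, so $\widetilde \nabla U$ is unchanged), I will normalize by choosing $c:=\dashint_{\B_2} u = 0$. This is crucial because it unlocks the fractional Poincar\'e inequality $\int_{\B_2} u^2\,dx\le C\iint_{\B_2\times \B_2}(u(x)-u(y))^2 K_s(x,y)\,dxdy$, without which additive constants would spoil the subsequent bounds.

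Fix a cutoff $\chi\in C_c^\infty(\B_{3/2}(0)\times [0,3/2))$ with $\chi\equiv 1$ on $\B_1^+(0,0)$ and $|\widetilde \nabla\chi|\le C$. Testing the equation against $\chi^2 U$, integrating by parts over $\R^n\times(0,\infty)$, using \eqref{lapextmfd} to evaluate the trace contribution at $z=0$, and absorbing the cross-term $2\int \chi U\,\widetilde \nabla\chi\cdot z^{1-s}\widetilde \nabla U$ via Young's inequality yields the Caccioppoli-type bound
\[
\int_{\B_1^+(0,0)} z^{1-s}|\widetilde \nabla U|^2\,dVdz \le C\int_{\widetilde M} U^2|\widetilde \nabla\chi|^2 z^{1-s}\,dVdz + C\left|\int_{\R^n}\chi^2 u\,(-\Delta)^{s/2}u\,dx\right|.
\]
It remains to bound each of the two terms on the right by the semilocal seminorm $\iint_{\R^n\times\R^n\setminus(\B_2^c\times\B_2^c)}(u(x)-u(y))^2 K_s\,dxdy$.

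For the boundary integral I employ the bilinear representation $\int f\,(-\Delta)^{s/2}g\,dx = \tfrac12\iint(f(x)-f(y))(g(x)-g(y))K_s\,dxdy$ with $f=\chi^2 u,\,g=u$, and the decomposition $\chi^2(x)u(x)-\chi^2(y)u(y) = \tfrac12(\chi^2(x)+\chi^2(y))(u(x)-u(y))+\tfrac12(u(x)+u(y))(\chi^2(x)-\chi^2(y))$. The first (diagonal) piece is nonnegative and supported in the semilocal set, hence bounded directly. The second (off-diagonal) piece is treated by Young's inequality, reducing to the control of $\iint u(y)^2(\chi^2(x)-\chi^2(y))^2 K_s\,dxdy$; integrating out $x$ produces a weight $w(y)$ that is bounded for $y$ near $\B_2$ and decays like $|y|^{-n-s}$ at infinity (by Theorem \ref{prop:kern1}). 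Then $\int u(y)^2 w(y)\,dy$ is controlled by splitting $y\in\B_2$ (fractional Poincar\'e, using $\dashint_{\B_2}u=0$) and $y\in\B_2^c$ (Jensen: $u(y)^2\le\dashint_{\B_2}(u(y)-u(x))^2\,dx$, combined with $|x-y|\lesssim 1+|y|$ to convert the resulting integral into the semilocal seminorm).

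The bulk term $\int U^2|\widetilde \nabla\chi|^2 z^{1-s}$ is handled analogously via the representation $U(p,z)=\int \mathbb{P}_{\widetilde M}(p,q,z)u(q)\,dq$ implicit in Theorem \ref{extmfd}, where $\mathbb{P}_{\widetilde M}$ is the fractional Poisson kernel (a probability density in $q$). Jensen's inequality gives $U^2\le\int \mathbb{P}_{\widetilde M} u^2\,dq$, and Fubini reduces the term to $\int u(q)^2\widetilde w(q)\,dq$ with a weight $\widetilde w$ of the same qualitative behaviour as $w$ above (provided by the heat-kernel bounds of Lemma \ref{globcomparability}, transferred to $\mathbb{P}_{\widetilde M}$ through the formula $K_s^z = \mathbb{P}_{\widetilde M}(\cdot,\cdot,z)z^{-s}$). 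The main obstacle throughout is exactly this bookkeeping: every sub-estimate must be tight, producing no stand-alone additive constants, and it is the mean-zero normalization combined with the Jensen bound for far-away $y$ that closes the loop.
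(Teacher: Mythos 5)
Your proposal takes a genuinely different route from the paper. The paper decomposes the boundary datum as $u = u\xi + u(1-\xi) = u_1 + u_2$ with $\xi$ a cutoff equal to $1$ on $\B_{3/2}$, writes $U = U_1 + U_2$ by linearity of the extension, uses the \emph{global} energy identity $\beta_s\int z^{1-s}|\widetilde\nabla U_1|^2 = [u_1]^2_{H^{s/2}}$ (legitimate because $u_1$ is compactly supported and bounded, so $u_1\in H^{s/2}$ and $U_1\in\widetilde H^1$), and for $U_2$ relies on the \emph{pointwise} barrier estimate of Lemma \ref{linf-grad} (applicable since $u_2$ vanishes on $\B_{3/2}$), combined with the Poisson-kernel decay bound and Hölder. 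Your Caccioppoli test with the cutoff $\chi$ in the extended variable, followed by the bilinear decomposition of the boundary trace term and a weight/Poincar\'e/Jensen analysis of both error terms, is a legitimate PDE-flavored alternative; your treatment of the weights $w$ and $\widetilde w$ and the use of the mean-zero normalization to eliminate the stray additive constant mirror what the paper does for the $U_2$-contribution.

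However, there is a substantive gap in the Caccioppoli step that the paper's decomposition is specifically designed to avoid. To integrate by parts against $\chi^2 U$ and invoke the Dirichlet-to-Neumann relation \eqref{lapextmfd}, you need to know \emph{a priori} that $U$ has locally finite weighted energy near $\{z=0\}$ and that $z^{1-s}\partial_z U$ has a trace there in a sufficiently strong sense. But $u$ is merely $L^\infty$ with no decay, so $(-\Delta)^{s/2}u$ is only a distribution, $U$ is not a priori in $\widetilde H^1_{\rm loc}$ up to the boundary, and Theorem \ref{extmfd}'s DtN conclusion was proved only for smooth $u$ on a closed manifold. Establishing local finiteness of $\int_{\B_1^+} z^{1-s}|\widetilde\nabla U|^2$ is precisely what the lemma is asking to prove, so testing the equation against $\chi^2 U$ and absorbing terms becomes circular unless you first carry out an approximation/truncation argument and pass to the limit — which you do not indicate. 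The paper sidesteps this entirely: for $U_1$ the energy identity is unconditionally valid, and for $U_2$ it obtains a pointwise bound $z^{1-s}|\widetilde\nabla U_2|\le C{\rm I}^{1/2}$ in $\B_1^+$ by a barrier, from which local finiteness of the energy is immediate, with no integration by parts on $U$ itself. You should either insert a careful approximation of $u$ by smooth, compactly supported data (preserving the mean-zero normalization and checking that the weighted local energies converge), or adopt the paper's splitting, before the Caccioppoli step can be trusted.

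Two smaller points worth noting. In the Young step for the off-diagonal term, factoring $\chi^2(x)-\chi^2(y) = (\chi(x)+\chi(y))(\chi(x)-\chi(y))$ is essential so that the compensating term $(\chi(x)+\chi(y))^2(u(x)-u(y))^2 K_s$ remains supported in the semilocal set; if you Young against $(u(x)-u(y))^2 K_s$ without a cutoff weight, the integral is over all of $\R^n\times\R^n$ and may be infinite. Also, since $\beta_s^{-1}\to\infty$ as $s\to 2^-$, you should check that the final constant remains controlled by the factor $\alpha_{n,s}/\beta_s$ when you convert from $K_s$ to the kernel $|x-y|^{-n-s}$ of the statement; this ratio is indeed bounded on $(s_0,2)$, but it deserves a sentence.
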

\begin{proof}
We proceed as in \cite[Proposition 7.1]{CRS}.
Assume without loss of generality that $\int _{\B_2}u(x) dx=0.$
Let $\xi: \R^n \rightarrow \R$ be a cutoff function such that
$\xi = 1$ in  $\B_{3/2}(0)$ and it is compactly supported in
$\B_2(0)$.  We write
$u= u \xi + u(1-\xi)= u_1 + u_2$ and  $U= U_1+ U_2$.

\vsp
On the one hand, since $u_1$ is compactly supported we have
\[
\begin{split}
\beta_s \int_{\R^{n+1}_+}z^{1-s} | \widetilde \nabla  U_1|^2 &  dVdz =   \|u_1\|_{H^{s/2}(\R^n,g)} \\ & = \iint_{\R^n\times \R^n\setminus (\B_2^c\times \B_2^c)} |u_1(x) -
u_1(y)|^2 K(x,y)  \;dV_x dV_y\\
& \le  C \alpha_{n,s} \iint_{\B_2\times \R^n} \frac{|u(x) -
u(y)|^2}{|x-y|^{n+s}} \xi^2(x) \,dxdy + C \alpha_{n,s} \iint_{\B_2\times \R^n} |u(x)|^2 \frac{|\xi(x) - \xi(y)|^2}{|x-y|^{n+s}} \,dxdy\\
& \le C \alpha_{n,s}\iint_{\R^n\times \R^n \setminus (\B_2^c \times \B_2^c)} \frac{|u(x) -
u(y)|^2}{|x-y|^{n+s}}  \,dxdy + C \int_{\B_2} |u(x)|^2 dx.
\end{split}
\] 
Moreover, using the fractional  Poincar\'e inequality (recall $\int _{\B_2}u(x) dx=0$):
\[
\int_{\B_2} |u(x)|^2 dx \le C\alpha_{n,s}\iint_{\B_2\times \B_2} \frac{|u(x) -
u(y)|^2}{|x-y|^{n+s}}  \,dxdy \le C\alpha_{n,s}\iint_{\R^n\times \R^n \setminus (\B_2^c \times \B_2^c)} \frac{|u(x) -
u(y)|^2}{|x-y|^{n+s}}  \,dxdy =: {\rm I}
\]

On the other hand (using again $\int _{\B_2}u(y) dy=0$)
\[\int_{\R^n}\frac{u(x)^2}{(1+|x|^2)^{\frac{n+s}{2}}}dx =
\iint\frac{(u(x)-u(y))^2}{(1+|x|^2)^{\frac{n+s}{2}}}\frac{\chi_{\B_2}(y)}{|\B_2|}
dxdy - \iint\frac{u(y)^2}{(1+|x|^2)^{\frac{n+s}{2}}}\frac{\chi_{\B_2}(y)}{|\B_2|}dxdy
\le C{\rm I},\]
and by Holder's inequality
\begin{equation*}
    \int_{\R^n}\frac{|u(x)|}{(1+|x|^2)^{\frac{n+s}{2}}}dx \le \left(\int_{\R^n} \frac{|u(x)|^2}{(1+|x|^2)^{\frac{n+s}{2}}} dx \right)^{1/2} \left(  \int_{\R^n} \frac{1}{(1+|x|^2)^{\frac{n+s}{2}}} dx \right)^{1/2} \le C
{\rm I}^{1/2}.
\end{equation*}

\noindent 
\textbf{Claim.} There is $C=C(n)>0$ such that for $(x,z) \in \B_1^+$
\begin{equation}\label{bsfyhdfgh}
    z^{1-s} |\widetilde \nabla U_2(x,z)| \le C
\int_{\R^n}\frac{|u_2(y)|}{{(1+|y|^2)^{\frac{n+s}{2}}}}dy \,.
\end{equation}
\noindent
We postpone the proof of this claim and first see how to conclude the proof of Lemma \ref{lem:whtorwohh0} with it. 

\vsp
By the claim, if  $(x,z) \in \B_1^+$ then 
\[z^{1-s} |\widetilde \nabla U_2(x,z)| \le C
\int_{\R^n}\frac{|u_2(y)|}{{(1+|y|^2)^{\frac{n+s}{2}}}}dy \le C
\int_{\R^n}\frac{|u(y)|}{{(1+|y|^2)^{\frac{n+s}{2}}}}dy \le C
{\rm I}^{1/2}\]

But then the inequality
\[
\int_{\B_1^+}z^{1-s} |\widetilde \nabla U_2|^2 dxdz \le   C{\rm I}^{1/2} \int_{\B_1^+} |\widetilde \nabla U_2| dxdz \le C{\rm I}^{1/2} \bigg(\int_{\B_1^+} z^{1-s}|\widetilde \nabla U_2|^2 dxdz\bigg)^{1/2} \bigg(\int_{\B_1^+} z^{s-1} dxdz\bigg)^{1/2},
\]
gives 
\[
\int_{\B_1^+}z^{1-s} |\widetilde \nabla  U_2|^2 dxdz \le   C{\rm I},
\]
and the lemma follows.

\vsp
%\begin{proof}[%Second 
It only remains to prove \eqref{bsfyhdfgh}.
    Let $H_N(x,y,t)$ be the heat kernel of $N:=(\R^n,g)$. By \eqref{bdfgshdfg} the fractional Poisson kernel\footnote{Which equals $\sigma_{n,s} \frac{z^s}{(|x-y|^2+z^2)^{\frac{n+s}{2}}}$ on $\R^n$ with its standard metric, for some normalization constant $\sigma_{n,s}>0$.} $ \mathbb{P}_N : \R^n \times \R^n \times (0,\infty) \to [0, \infty)$ of $N$ can be represented as
\begin{equation*}
    \mathbb{P}_N(x,y,z)= \frac{z^{s}}{2^s\Gamma(s/2)} \int_{0}^{\infty} H_N(x,y,t)  e^{-\frac{z^2}{4t}}\frac{dt}{t^{1+s/2}} \, , 
\end{equation*}
and the solution $U_2$ to the extension problem with trace $u_2$ is $U_2(x,z) = \int_{\R^n} \mathbb{P}_N(x,y,z) u_2(y) \, dV_y $. Then, by Lemma \ref{globcomparability} we have that $\mathbb{P}_N$ is comparable (up to dimensional constants) to the fractional Poisson kernel of $\R^n$ with its standard metric, that is 
\begin{equation*}
    cs \frac{z^s}{(|x-y|^2+z^2)^{\frac{n+s}{2}}}  \le \mathbb{P}_N(x,y,z) \le Cs \frac{z^s}{(|x-y|^2+z^2)^{\frac{n+s}{2}}} \,,
\end{equation*}
for some $C,c>0$ dimensional. Hence, for every $(x,z) \in \widetilde \B_{6/5}^+$
\begin{equation*}
   |U_2 (x,z)| \le Cs  \int_{\R^n} \frac{|u_2(y)|}{(|x-y|^2+z^2)^{\frac{n+s}{2}}} \, dy \le Cs \int_{\R^n \setminus \B_2} \frac{|u_2(y)|}{|x-y|^{n+s}} \, dy \,.
\end{equation*} 
Since $x\in \B_{6/5}$ and $y\in \R^n \setminus \B_2$ there holds $|x-y|\ge \frac{1}{100}\sqrt{1+|y|^2}$, and hence 
\begin{equation*}
   \|U_2\|_{L^\infty(\B^+_{6/5})} \le Cs \int_{\R^n} \frac{|u_2(y)|}{(1+|y|^2)^{\frac{n+s}{2}}} \, dy \,.
\end{equation*} 
From here, the result follows directly by Lemma \ref{linf-grad}. 
\end{proof}

We will now give an interpolation result for the extended energy. We will use the following standard interpolation result on a Euclidean ball:
\begin{proposition} \label{interprop} Let $s\in(0,1)$, and let $u:\B_1\subset\R^n\to[-1,1]$ be a function of bounded variation. Then,
\begin{equation*}
    \iint_{\B_1\times \B_1} \frac{|u(x)-u(y)|^2}{|x-y|^{n+s}}\,dx\,dy \leq \frac{C(n)}{(1-s)s}[u]_{BV(\B_1)}^s\|u\|_{L^1(\B_1)}^{1-s}\, .
\end{equation*}
    
\end{proposition}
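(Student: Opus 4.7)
The strategy is the classical one for such interpolation-type inequalities: split the double integral at a threshold distance $r \in (0, 2]$ to be optimized later, estimate the contribution of ``short-range'' pairs via the BV seminorm, estimate the ``long-range'' contribution via the $L^1$ norm, and balance the two bounds. Throughout one exploits the bound $|u|\le 1$, which yields the pointwise inequality $|u(x)-u(y)|^2 \le 2|u(x)-u(y)|$ and thus linearizes both tails of the integral.

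\textbf{Short-range estimate.} For the region $\{|x-y|\le r\}\cap(\B_1\times \B_1)$, we change variables with $h=y-x$ and use Fubini to rewrite
\begin{equation*}
\iint_{\{|x-y|\le r\}\cap(\B_1\times\B_1)} \frac{|u(x)-u(y)|^2}{|x-y|^{n+s}}\,dx\,dy \;\le\; 2 \int_{|h|\le r} \frac{1}{|h|^{n+s}} \left( \int_{\B_1\cap(\B_1-h)} |u(x+h)-u(x)|\,dx \right) dh\,.
\end{equation*}
The inner integral is controlled by the classical BV translation estimate $\int_{\B_1\cap(\B_1-h)} |u(x+h)-u(x)|\,dx \le |h|\,[u]_{BV(\B_1)}$ (which follows by representing $u(x+h)-u(x)$ via the distributional gradient $Du$, a finite Radon measure on $\B_1$). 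Integrating in polar coordinates gives a bound of the form $\tfrac{C(n)}{1-s}\,r^{1-s}\,[u]_{BV(\B_1)}$.

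\textbf{Long-range estimate.} For the region $\{|x-y|>r\}\cap(\B_1\times \B_1)$, using $|u(x)-u(y)|^2 \le 2(|u(x)|+|u(y)|)$ and symmetry we bound
\begin{equation*}
\iint_{\{|x-y|>r\}\cap(\B_1\times\B_1)} \frac{|u(x)-u(y)|^2}{|x-y|^{n+s}}\,dx\,dy \;\le\; 4\int_{\B_1} |u(x)| \left(\int_{|h|>r} \frac{dh}{|h|^{n+s}}\right)dx \;=\; \frac{C(n)}{s}\,r^{-s}\,\|u\|_{L^1(\B_1)}\,.
\end{equation*}

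\textbf{Optimization and endpoint case.} Adding the two estimates gives
\begin{equation*}
\iint_{\B_1\times\B_1} \frac{|u(x)-u(y)|^2}{|x-y|^{n+s}}\,dx\,dy \;\le\; \frac{C(n)}{1-s}\,r^{1-s}[u]_{BV(\B_1)} + \frac{C(n)}{s}\,r^{-s}\|u\|_{L^1(\B_1)}\,,
\end{equation*}
and the natural choice $r = \|u\|_{L^1(\B_1)}/[u]_{BV(\B_1)}$ (assuming both quantities are nonzero) makes the two terms of the same order, producing the claimed bound $\tfrac{C(n)}{s(1-s)}[u]_{BV(\B_1)}^s\|u\|_{L^1(\B_1)}^{1-s}$. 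The only mild subtlety is the endpoint situation when this optimal $r$ falls outside $(0,2]$: if $r>2$ one simply takes $r=2$ (the long-range region becomes empty since $\mathrm{diam}(\B_1)=2$), and the required inequality follows from the short-range estimate together with the trivial bound $[u]_{BV}^{1-s}\le (\|u\|_{L^1}/2)^{1-s}$ available in that regime; the degenerate cases $[u]_{BV}=0$ or $\|u\|_{L^1}=0$ are immediate. I expect the main obstacle to be purely bookkeeping---tracking constants and verifying the BV translation inequality on the domain $\B_1$ with the correct dependence on boundary effects---rather than any conceptual difficulty.
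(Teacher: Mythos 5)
Your proof is correct. The paper itself gives no proof and simply defers to Proposition 4.2 in Brasco--Lindgren--Parini (and to Br\'ezis--Mironescu); the argument you give---splitting at a threshold distance $r$, controlling the short-range part via the BV translation estimate $\int_{\B_1\cap(\B_1-h)}|u(x+h)-u(x)|\,dx\le |h|[u]_{BV(\B_1)}$ (which uses the convexity of $\B_1$), controlling the long-range part via $|u|\le 1$ and $\|u\|_{L^1}$, and optimizing $r$ with the $r>2$ endpoint handled separately---is precisely the standard interpolation-by-truncation proof used in that reference, with all the delicate points (boundary effects, endpoint regime, constants $\tfrac{1}{1-s}$ and $\tfrac{1}{s}$) handled correctly.
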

\begin{proof}
See, for instance, Proposition 4.2 in \cite{Brasc} for a simple proof; see also \cite{BM}.
\end{proof}

\begin{lemma}\label{lem:whtorwohh}
Let $s_0\in (0,1)$ and $s \in (s_0,1)$. Let $M$ satisfy flatness assumptions ${\rm FA}_1(M,g,1,p,\varphi)$. Let also $U : \ov{B}^+_1(p,0) \to (-1,1)$ be any function solving
\begin{equation}\label{extenpro}
    \widetilde{\textnormal{div}}(z^{1-s} \widetilde \nabla U) =0\, ,
\end{equation} and let $u$ be its trace on $B_1(p)$. Then for all $\varrho>0$, $R\ge1$, $k \in \R$ and  $q\in B_{1/2}(p)$ such that  $B_{R\varrho}(q)\subset B_{3/4}(p)$,  
\[
 \varrho^{s-n} \beta_s \int_{\widetilde B^+_{\varrho}(q,0)} z^{1-s} |\widetilde \nabla U|^2\,dVdz \leq \frac{C}{R^s} + \frac{C}{1-s}\bigg(\varrho^{-n}\int_{B_{R \varrho}(q)}|u + k|\,dV\bigg)^{1-s}  \bigg(\varrho^{1-n}\int_{B_{R \varrho}(q)} |\nabla u|\,dV\bigg)^s,
\]
where the constant $C$ depends only on $n$ and $s_0$.
\end{lemma}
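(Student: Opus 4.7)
The plan is to emulate the strategy of Lemma \ref{lem:whtorwohh0} in a localized form. By the scaling properties recorded in Remark \ref{flatscalingrmk}, I will first reduce to the case $\varrho=1$: the weighted energy, the flatness hypothesis, and the hypotheses on $U$ all rescale correctly under $(x,z)\mapsto U(q+\varrho x,\varrho z)$. Since the asserted inequality must hold for every $k\in\R$ and its right-hand side is smallest at the optimizer $k_0 := \arg\min_k \int_{B_R(q)}|u+k|$, proving it for $k_0$ is enough. A standard median argument gives $|k_0|\le \|u\|_{L^\infty}\le 1$, so that $V:=U+k_0$ and $v:=u+k_0$ satisfy $|V|,|v|\le 2$ with $\widetilde\nabla V = \widetilde\nabla U$; moreover, the optimality of $k_0$ combined with Poincar\'e yields the key auxiliary bound $\int_{B_R(q)}|v|\le CR\int_{B_R(q)}|\nabla v|$.

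Following the localization scheme of Lemma \ref{lem:whtorwohh0}, I will extend the metric $g$ beyond $B_1(p)$ to a metric $\tilde g$ on all of $\R^n$ by cutoff gluing to the Euclidean metric (as in Lemma \ref{loccomparability}), preserving the flatness structure. Pick $\xi\in C_c^\infty(B_R(q))$ with $\xi\equiv 1$ on $B_{R/2}(q)$, $|\nabla \xi|\le C/R$ and (choosing $\xi$ radial and piecewise linear on the annulus) $\xi(x)\le C(R-|x-q|)/R$. Let $\tilde V_1$ be the Caffarelli-Silvestre extension on $(\R^n,\tilde g)\times\R_+$ of $v_1 := v\xi$ (extended by zero), and set $W := V - \tilde V_1$ on the domain of $V$. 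Then $W$ solves the extension equation with trace $v(1-\xi)$ vanishing on $B_{R/2}(q)$, and by the maximum principle $\|W\|_{L^\infty} \le \|V\|_{L^\infty} + \|\tilde V_1\|_{L^\infty} \le 4$. The triangle inequality gives
\begin{equation*}
\int_{\widetilde B_1^+(q,0)} z^{1-s}|\widetilde\nabla V|^2 \, dV\,dz \le 2 \int z^{1-s}|\widetilde\nabla \tilde V_1|^2 + 2 \int z^{1-s}|\widetilde\nabla W|^2,
\end{equation*}
and the two pieces are handled by separate mechanisms.

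For the near piece $\tilde V_1$, the global energy identity (from the analogue of Theorem \ref{extmfd} on noncompact manifolds) yields $\beta_s \int_{\R^n\times\R_+} z^{1-s}|\widetilde\nabla \tilde V_1|^2 = \tfrac12 [v_1]_{H^{s/2}(\R^n,\tilde g)}^2$, and Lemma \ref{globcomparability} bounds $[v_1]_{H^{s/2}(\R^n,\tilde g)}^2 \le C\iint |v_1(x)-v_1(y)|^2|x-y|^{-n-s}\,dxdy$. Splitting the double integral into $B_R\times B_R$ and the two cross terms, using the identity $v_1(x)-v_1(y)=\xi(x)(v(x)-v(y))+v(y)(\xi(x)-\xi(y))$ together with Proposition \ref{interprop} for the interior piece and the cutoff structure for the cross terms, one obtains
\begin{equation*}
[v_1]_{H^{s/2}(\R^n,\tilde g)}^2 \le \frac{C}{1-s}\Big(\int_{B_R(q)}|v|\Big)^{1-s}\Big(\int_{B_R(q)}|\nabla v|\Big)^s + CR^{-s}\int_{B_R(q)}|v|^2.
\end{equation*}
The residual absorbs into the interpolation term: writing $M := \int|v|$ and $N := \int|\nabla v|$, $|v|\le 2$ gives $\int|v|^2\le 2M$, and the Poincar\'e-type bound $M\le CRN$ yields $R^{-s}M \le R^{-s}M^{1-s}(CRN)^s = CM^{1-s}N^s$. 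For the far piece $W$, Lemma \ref{linf-grad} applied at scale $R/3$ gives $z^{1-s}|\widetilde\nabla W|\le CR^{-s}\|W\|_{L^\infty}/s$ on $\widetilde B_1^+(q,0)$ whenever $R\ge 3$, so $\beta_s\int z^{1-s}|\widetilde\nabla W|^2 \le CR^{-2s}\le CR^{-s}$. The complementary range $R\in[1,3]$ is handled directly by a Caccioppoli estimate, which provides a universal bound on the LHS comparable with $C/R^s$ in that regime. The main obstacle is the absorption step: it depends crucially on the optimal choice $k_0$, without which the residual $R^{-s}\int|v|^2$ could not be converted into the interpolation form on the right-hand side.
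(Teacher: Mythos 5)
Your approach is genuinely different from the paper's, and the large-$R$ part of it is a nice alternative. The paper first proves a comparability inequality (\ref{whithoiwh12}) between the given local solution $U$ and a globally-defined one $U'$ (using Lemma \ref{linf-grad} for the difference $U-U'$ at scale $\sim 1$), then invokes Lemma \ref{lem:whtorwohh0}, which is itself a near/far decomposition at the \emph{small} scale $\sim\varrho$ (cutoff $\xi$ at radius $2$, far piece controlled pointwise via the Poisson kernel), and finally splits the resulting Gagliardo integral over $\B_{2\varrho}\times\R^n$ into a $\B_{R\varrho}\times\B_{R\varrho}$ piece (interpolation) plus tails giving $C/R^s$. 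You instead cut off at the \emph{large} scale $R\varrho$: write $V=\tilde V_1+W$ with $\tilde V_1$ the extension of $v\xi$, $\xi$ supported in $B_R(q)$. The far piece $W$ then has trace vanishing on $B_{R/2}(q)$, so Lemma \ref{linf-grad} at scale $R/3$ gives directly $z^{1-s}|\widetilde\nabla W|\lesssim R^{-s}$; this is a cleaner mechanism for the $C/R^s$ term than the paper's route (which goes through an $L^\infty$ bound on the far extension by a weighted tail integral). Your absorption step is the real novelty: you observe that the cutoff residual $\sim R^{-s}\int_{B_R}|v|$ can be folded into the interpolation term precisely because, for the optimal $k_0$ (the median), the $L^1$--Poincar\'e bound $\int_{B_R}|v|\le CR\int_{B_R}|\nabla v|$ holds, which is what makes the decomposition at scale $R\varrho$ viable at all. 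This is a genuinely different engineering of the proof and it works for $R\ge 3$ (where $\widetilde B_1^+(q,0)\subset\widetilde B_{R/3}^+(q,0)$, so Lemma \ref{linf-grad} covers the whole target ball).

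However, there is a real gap for $R\in[1,3)$, where the $C/R^s$ term is of order one and you would need a universal bound $\varrho^{s-n}\beta_s\int_{\widetilde B^+_\varrho}z^{1-s}|\widetilde\nabla U|^2\le C$ or an interpolation estimate localized strictly to $B_{R\varrho}$. A plain Caccioppoli estimate does \emph{not} give this: the half-ball $\widetilde B_\varrho^+(q,0)$ meets $\{z=0\}$, and multiplying the equation by $(U-c)\eta^2$ and integrating by parts produces a boundary term $\int_{z=0}(-z^{1-s}U_z)(u-c)\eta^2$, which equals $\beta_s^{-1}\int(-\Delta)^{s/2}u\,(u-c)\eta^2$ and is not controlled by any of the hypotheses. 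Indeed no bound of the form $\varrho^{s-n}\int z^{1-s}|\widetilde\nabla U|^2\le C$ can hold purely from $|U|\le 1$ — take $u$ increasingly oscillatory — so the estimate must genuinely use the $\int|\nabla u|$ on the right, and your Lemma \ref{linf-grad} mechanism for the far piece simply does not reach all of $\widetilde B_1^+(q,0)$ when $R<3$. You would need to either redo the near/far split at a fixed intermediate scale and control the resulting mismatch between $B_{R\varrho}$ and the slightly larger ball (which is delicate exactly because the interpolation term is not monotone in the domain once paired with $1/R^s$), or fall back to the paper's route of bounding the local energy by that of the global solution via Lemma \ref{lem:whtorwohh0}. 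As written, the $R\in[1,3)$ case is not a proof.

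Two smaller remarks: (i) in the near-piece estimate, the decomposition $v_1(x)-v_1(y)=\xi(x)(v(x)-v(y))+v(y)(\xi(x)-\xi(y))$ should only be applied over $B_R\times B_R$ — for $y\notin B_R$ one must instead use $|v_1(x)-v_1(y)|^2=\xi^2(x)v^2(x)$ directly (this is what makes the cutoff tails come with a $v^2(x)$ weight and hence be $\lesssim R^{-s}\int_{B_R}|v|^2$ rather than $\sim R^{n-s}$; applying the identity with $y\notin B_R$ on the first summand would lose this); (ii) Lemma \ref{linf-grad} is stated under ${\rm FA}_2$, whereas the present lemma only assumes ${\rm FA}_1$ — this mismatch is already present in the paper's own proof, so I don't hold it against you, but it should be flagged.
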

\begin{proof}
Let us show  that if $U$ and $U'$ are two different
solutions $\ov{B}^+_1(p,0) \to (-1,1)$ of \eqref{extenpro} with the same trace $u$ on $B_{3/4}(p)$, then
\begin{equation}\label{whithoiwh12}
\int_{\widetilde B^+_{\varrho}(q,0)} z^{1-s} \big(|\widetilde \nabla U|^2-|\widetilde \nabla U'|^2\big)\,dVdz \le C\int_{\widetilde B^+_{\varrho}(q,0)} z^{1-s} |\widetilde \nabla U'|^2\,dVdz.
\end{equation}

Indeed, by Lemma \ref{linf-grad} (rescaled) we have
$z^{1-s} \big|\widetilde \nabla (U-U')\big| \le C$ in $\widetilde B^+_{1/2}(p,0)$. 
Thus, we obtain 
\[
\begin{split}
\int_{\widetilde B^+_{\varrho}(q,0)} z^{1-s} \big(|\widetilde \nabla U|^2-|\widetilde \nabla U'|^2\big)\,dVdz &= 
\int_{\widetilde B^+_{\varrho}(q,0)} z^{1-s} \big(\widetilde \nabla (U-U')\big)\cdot \big( \widetilde \nabla(U+U'))\,dVdz \\
& \le C \bigg(\int_{\widetilde B^+_{\varrho}(q,0)} z^{s-1} dVdz\bigg)^{\frac 1 2} \bigg(\int_{\widetilde B^+_{\varrho}(q,0)} z^{1-s} \big(|\widetilde \nabla U|^2+|\widetilde \nabla U'|^2 \big) dVdz\bigg)^{\frac 12}
\\& 
\le \frac 12 \int_{\widetilde B^+_{\varrho}(q,0)} z^{1-s} \big(|\widetilde \nabla U|^2-|\widetilde \nabla U'|^2\big)\,dVdz  + C\int_{\widetilde B^+_{\varrho}(q,0)} z^{1-s} |\widetilde \nabla U'|^2\,dVdz
\end{split}
\]
Thus \eqref{whithoiwh12} follows.

\vsp 
Now let $g_{ij}$ be the components of the metric in the coordinates $\varphi^{-1}$, $\eta \in C^\infty_c(\B_1)$ be a nonnegative smooth cut-off function satisfying $\eta\equiv 1$ in $\B_{3/4}$, and put $g'_{ij} = g_{ij}\eta + \delta_{ij}(1-\eta)$, a metric defined in the whole $\R^n$. Thanks to \eqref{whithoiwh12} it is enough to prove the lemma for the manifold $(\R^n, g')$ with $p=0$ and with $U$ replaced by the (unique!) bounded solution $U'$ of \eqref{extenpro} (with respect to the metric $g'$) in all of $\R^n\times \R_+$.
But in this case we can use Lemma \ref{lem:whtorwohh0} (rescaled)
and obtain 
\[
\begin{split}
 \varrho^{s-n} \int_{\widetilde B^+_{\varrho}} z^{1-s} |\widetilde \nabla U'|^2\,dVdz &\leq C \varrho^{s-n}\iint_{(\R^n\times \R^n) \setminus (\B_{2\varrho}^c\times \B_{2\varrho}^c)} \frac{|u(x)-u(y)|^2}{|x-y|^{n+s}} dxdy\\
 &\le 
 C \varrho^{s-n} 2\iint_{ \B_{R\varrho}\times\B_{R\varrho} \, \cup \, \B_{\varrho}\times \B_{R\varrho}^c} \frac{|u(x)-u(y)|^2}{|x-y|^{n+s}} dxdy
 \\
 &\le 
 C \varrho^{s-n} \bigg( \iint_{ \B_{R\varrho}\times\B_{R\varrho} } \frac{|u(x)-u(y)|^2}{|x-y|^{n+s}} dxdy + \frac{C \varrho^{n-s}}{R^s}\bigg) \,,
 \end{split}
\]
where we have used that 
\begin{align*}
    \iint_{\B_\varrho \times \B_{R\varrho}^c} \frac{|u(x)-u(y)|^2}{|x-y|^{n+s}} \, dxdy \le C \int_{\B_\varrho}\left( \int_{\B_{R\varrho}} \frac{1}{(|y|-\varrho)^{n+s}} dy\right)dx \le \frac{C\varrho^{n}}{(R\varrho)^{s}} \,.
\end{align*}
We conclude using the interpolation inequality of Proposition \ref{interprop}, since the modulus of the Euclidean gradient in $\R^{n+1}$ and the metric gradient $|\widetilde \nabla U|$ are comparable.
\end{proof}

%%%%%%%%%%%%%%%%%%% Acknowledgements %%%%%%%%%%%%%%%%%%
\subsection*{Acknowledgements} 
M.C. would like to thank the FIM (Institute for Mathematical Research) at ETH Z\"{u}rich for the wonderful hospitality during his many stays in 2022-2023. J.S. is supported by the European Research Council under Grant Agreement No 948029. E.F-S has been partially supported
by the CFIS Mobility Grant, by the MOBINT-MIF Scholarship from AGAUR, and the support of a fellowship from ”la Caixa” Foundation (ID 100010434)”. The
fellowship code is “LCF/BQ/EU22/11930072”.

\vspace{10pt}\noindent
\textbf{Data availability}: This paper does not use any data set.

\vspace{10pt}\noindent
\textbf{Conflict of interest}: On behalf of all authors, the corresponding author states that there is no conflict of interest.

%%%%%%%%%%%%%%%%%%%%%%%%%%%%%%%%%%%%%%%%%%%%%%%%%%%%%%%

\end{document}